\documentclass{article}
\usepackage[utf8]{inputenc}
\usepackage{amsfonts}
\usepackage{appendix}
\usepackage{amsthm}
\usepackage{soul}

\usepackage{amsmath}
\usepackage{dsfont}
\usepackage{amssymb}
\usepackage{graphicx}
 \usepackage{color, xcolor}
\usepackage{array}
\usepackage{fullpage}
\usepackage{caption}
\usepackage{lineno}
\usepackage{subcaption}
\newcolumntype{R}[1]{>{\raggedleft\arraybackslash }b{#1}}
\newcolumntype{L}[1]{>{\raggedright\arraybackslash }b{#1}}
\newcolumntype{C}[1]{>{\centering\arraybackslash }b{#1}}
\newtheorem{Def}{Definition}
\newtheorem{thm}{Theorem}
\newtheorem{prop}{Proposition}
\newtheorem{lemma}{Lemma}
\newtheorem{corollary}{Corollary}
\newtheorem{rem}{Remark}

\newcommand{\Td}{\mathbb{T}^{d-1}}
\newcommand{\Tnd}{\mathbb{T}_{N}^{d-1}}
\newcommand{\R}{\mathbb{R}}
\newcommand{\Z}{\mathbb{Z}}
\newcommand{\N}{\mathbb{N}}

\newcommand{\whr}{\widehat{\rho}}
\newcommand{\whg}{\widehat{G}}

\newcommand{\intT}{\int_{0}^T}
\newcommand{\whsN}{\widehat{\Sigma}_{N}}

\newcommand{\xiw}{(\xi,\omega)}

\newcommand{\ddd}{2d(\lambda_1-\lambda_2)}
\newcommand{\e}{\eta}

\newcommand{\xek}{(x,x+e_{k})}

\newcommand{\nualph}{\nu_{\widehat{\alpha}}^N}

\newcommand{\xiwk}{\xi^{x,x+e_{k}},\omega^{x,x+e_{k}}}
\newcommand{\xit}{(\xi_t,\omega_t)}
\newcommand{\wht}{\widehat{\theta}}
\newcommand{\ttl}{\theta_{\ell}}
\newcommand{\ttr}{\theta_{r}}
\newcommand{\tr}{\text{Tr}}

\numberwithin{equation}{section}

\usepackage{geometry} 
\geometry{hmargin= 2.5cm,vmargin=3cm}

\usepackage{soul}
\setstcolor{red}

\title{\textbf{ Hydrodynamic and hydrostatic limit for a generalized contact process with mixed boundary conditions
}}
\date{}

\usepackage{authblk}

\setlength{\affilsep}{2em}  
\newsavebox\affbox
\author[1]{\text{M. Mourragui}}
\author[2]{\text{E. Saada}}
\author[3]{\text{S. Velasco}}
\affil[1] {LMRS, UMR 6085,\textit{Université de Rouen Normandie, Avenue de l'Université, BP.12, Technopôle du Madrillet, F76801 Saint-Etienne-du-Rouvray, France.}}
\affil[2]{ CNRS, UMR 8145, \textit{ Laboratoire MAP5, Université Paris Cité, 45 rue des Saints-Pères, 75270 Paris Cedex 06, France.}
}
\affil[3]{ Laboratoire MAP5, \textit{ Université Paris Cité, 45 rue des Saints-Pères, 75270 Paris Cedex 06, France}
}

\begin{document}

\maketitle

\begin{abstract} \noindent
		\textbf{Abstract }We consider an interacting particle system which models the sterile insect technique. It is the superposition of a  generalized contact process with exchanges of particles on a finite cylinder with open boundaries (see \textit{Kuoch et al., 2017}). We first show that when the system is in contact with reservoirs at different slow-down rates, the hydrodynamic limit is a set of coupled non linear reaction-diffusion equations with mixed boundary conditions. We then prove the hydrostatic limit when the macroscopic equations exhibit a unique attractor. \\ \\
		\let\thefootnote\relax\footnotetext{
			\small $^{*}$\textbf{Corresponding author.} \textit{
				\textit{E-mail addresses: mustapha.mourragui@univ-rouen.fr (M. Mourragui), ellen.saada@parisdescartes.fr (E. Saada), sonia.velasco@parisdescartes.fr (S. Velasco).  }}
			
		}
		\noindent \textbf{\textit{Keywords}}: \textit{Hydrodynamic limit; Hydrostatic limit; Random reservoirs; System of reaction-diffusion equations; Generalized contact process; Stirring process; Mixed boundary conditions. \\\\}
    \textbf{\textit{MSC}}: \noindent\textit{60K35; 82C22}
	\end{abstract}
\noindent\rule{15cm}{0.4pt}
\section{Introduction}
In this paper, we consider the interacting particle system introduced in \cite{kuoch:hal-01100145} to model the sterile insect technique. This technique was developed, among others, by E. Knipling (see \cite{knipling_possibilities_1955}) to eradicate New World screw worms in the 1950s, a serious pest for warm blooded animals. The method is still used today, for instance in France, to protect crops from the very invasive Mediterranean flies, and it is also being tested to fight mosquitoes which transmit dengue in countries like Panama or Brazil. We refer to \cite{TIS} for a detailed list of trials and programs regarding that method. The sterile insect technique works as follows: male insects are sterilized in captivity using gamma rays. They are then released in the wild population, where females mate only once, giving rise to no offspring if they mate with a sterile male. When enough sterile individuals are released, the wild population eventually becomes extinct. From a mathematical perspective, the sterile insect technique has mainly been modelled in a deterministic way through the study of partial differential equations (see \cite{Vauchelet}).

This technique was studied from a probabilistic perspective in \cite{kuoch:hal-01100145}  and \cite{KMS} using interacting particle systems.  In \cite{kuoch:hal-01100145}, a phase transition result was proved at the microscopic level. Recently, another probabilistic model was studied in \cite{Durrett}, also at the microscopic level. In \cite{KMS}, the study is carried out at the macroscopic level (hydrodynamic limit) in both finite and infinite volume with reservoirs, in order to account for the migration/immigration mechanism. 

Here, we aim at studying the hydrodynamic and hydrostatic limits of the interacting particle system in \cite{KMS}, under the effect of slow reservoirs in any dimension $d\geq 1$. The slow-down mechanism models the fact that beyond the boundary through which insects arrive into the system or leave it, there are very few insects (the exterior of the system might be a territory which is much less favorable to the development of these insects). 

In the perspective of interacting particle systems, the sterile insect technique is modelled as follows: individuals evolve on a $d$-dimensional finite set $B_{N}=\{-N,\cdots,N\}\times \Tnd$, where $N\geq 1$ and $\Tnd = (\Z/N\Z)^{d-1}.$ The evolution of the population is described by a continuous time Markov process $(\e_{t}^N)_{t\geq 0}$ with state space $E^{B_{N}}$ where $E$ is a countable set. In this model, the gender does not come into account so we refer to sterile individuals rather than sterile males. The quantity of interest here is not the number of insects per site but the type of insects present at a given site. Precisely, $E=\{0,1,2,3\}$ and for $x$ in $B_{N}$,
\[
\eta(x) = \left\{
    \begin{array}{ll}
        0 & \text{if there are no insects in}~ x, \\
        1 & \text{if there are only wild insects in}~ x,\\
        2 & \text{if there are only sterile insects in}~ x,\\
        3 & \text{if there is a combination of wild and sterile insects in}~ x.
    \end{array}
\right.
\]
The dynamics of the Markov process is the superposition of three Markovian jump processes:
\begin{itemize}
    \item [(i)] An exchange dynamics which models the fact that insects move in an isotropic way within the bulk $B_{N}$ and which is parameterised by a diffusivity constant $D>0$. Precisely, for a configuration $\e$ and $x,y$ two sites in $B_N$, the states of sites $x$ and $y$ in $\e$ are exchanged at rate $D$.
    \item[(ii)] A birth and death dynamics within $B_N$ which models births of individuals due to the mating of a wild individual with wild or sterile insects, as well as deaths of individuals. This is the dynamics introduced in \cite{kuoch:hal-01100145} that was referred to as a contact process with random slowdowns (CPRS).  It is parameterised by a release rate $r>0$ and growth rates $\lambda_{1},\lambda_{2}>0$. Sterile individuals are injected on a site at rate $r$ independently of everything else. The rate at which wild individuals give birth (to wild individuals) on neighbouring sites is $\lambda_{1}$ at sites in state 1, and $\lambda_{2}$ at sites in state 3. Sterile individuals do not give birth. We take $\lambda_{2}<\lambda_{1}$ to reflect the fact that fertility is reduced at sites in state $3$. Deaths for each type of insects occur independently and at rate 1. 
    \item[(iii)] A boundary dynamics which models the slow migration/immigration mechanism. The mechanism is parameterised by a function $\widehat{b}=(b_{1},b_2,b_3):\{-1,1\}\times \Td \rightarrow [0,1]^3$ for the rates, where $\Td = (\R/\Z)^{d-1}$, and the slowdown effect by two constants $\theta_{\ell}$ and $\theta_r$ in $\R^+$. 
\end{itemize}
Note that without the presence of sterile insects, the CPRS would be a basic contact process (as defined for instance in \cite{liggett_interacting_2005}) with parameter $\lambda_{1}$, and the presence of sterile insects can be interpreted as a random decrease of the fertility rate due to the presence of sites containing sterile and wild individuals. In \cite{kuoch:hal-01100145}, the microscopic study of the contact process with random slowdowns in dimension $d\geq 1$ leads to the following phase transition result: for certain values of $\lambda_1$ and $\lambda_2$, when $r$ is large enough, the healthy population almost surely becomes extinct, and survives otherwise. In this paper, the CPRS will be called generalized contact process. In \cite{KMS}, the hydrodynamic limit of the superposition of the three dynamics above, where the first and the third one are accelerated in the diffusive scaling $N^2$, and where $\ttl=\ttr=0$, is proven to be a system of non linear reaction-diffusion equations with Dirichlet boundary conditions in any dimension.

In this paper, we first prove the finite volume hydrodynamic limit of this particle system for any values of $\theta_{\ell},\theta_r\geq 0$ and in any dimension. The hydrodynamic equation obtained has mixed boundary conditions which depend on the values of $\theta_{\ell}$, resp. $\theta_r$. Precisely, for $\ttl\in [0,1)$, resp. $\ttr\in [0,1)$, we get a Dirichlet type boundary condition at the left-hand side, resp. right-hand side of the system. For $\ttl=1$, resp. $\ttr=1$, we get a Robin type boundary condition at the left-hand side, resp. right-hand side of the system. For $\ttl>1$, resp. $\ttr>1$, we get a Neumann type boundary condition at the left-hand side, resp. right-hand side of the system.

We then prove the finite volume hydrostatic limit of the particle system for a specific class of parameters regarding the dynamics. Within that class of parameters, the sequence of invariant measures of the interacting particle system is associated to a profile which is the stationary solution of the hydrodynamic equation with corresponding mixed boundary conditions. 

Our paper is, up to our knowledge, the first one regarding the effect of mixed reservoirs in and out of equilibrium (hydrodynamic and hydrostatic limit) for a multi species process in a bounded $d$-dimensional cylinder. Note that all our results can be extended to the $d$-dimensional hypercube, $[-1,1]^d$, following the method in \cite{LMS}. We believe that the analysis for general domains would require more effort, in particular, regarding the choice of a suitable discretization of the underlying macroscopic space. The discretization issue has been addressed for some conservative interacting particle systems evolving on a bounded Lipschitz domain (see \cite{sau} and references therein). For domains such as manifolds, we refer to \cite{Manifold} for the symmetric simple exclusion process with no reservoirs. Both papers \cite{sau} and \cite{Manifold} rely on duality techniques. The effect of reservoirs on a one dimensional conservative system has been widely studied in finite volume (see for instance \cite{Derrida}, \cite{Lebo}). Much is now known both at the microscopic and macroscopic levels. Recently, the effect of slow reservoirs has aroused considerable interest for the symmetric simple exclusion process in one dimension (see for instance \cite{baldasso_exclusion_2017}, \cite{franco_hydrodynamical_2013}, \cite{TT}, \cite{Patricia} and references therein). In \cite{farfan_hydrostatics_2011}, the authors proved a hydrostatic principle for a boundary driven gradient symmetric exclusion process using the fact that the stationary profile is a global attractor for the hydrodynamic equation. This method inspired our proof for the hydrostatic limit. However, the fact that we obtained coupled equations for the hydrodynamic limit, and that we work in any dimension make the analysis more subtle and require general analytical tools.

This paper is organized as follows. In Section 2 we introduce the notation and state our results. The proof of the hydrodynamic limit for each of these regimes is established in Section 3 via the Entropy Method. Among other things, as we work in an arbitrary dimension, some care must be taken to perform the "replacement lemma", and also to  define and characterize the solution of the hydrodynamic limit at the boundary, through the use of the "Trace Operator" (see subsection 3.4). The difficulties due to different boundary slowing exponents are purely analytical, and appear when proving uniqueness of the hydrodynamic equation with mixed boundary conditions. The proof of the hydrostatic limit, established in Section 4, relies on the use of a well chosen change of coordinates for the coupled equations. Under this change of coordinates (inspired by some simulations, see Appendix \ref{Simuu}), a comparison principle holds. It allows us to find a unique attractor when some conditions on the parameters are satisfied. Outside that class of parameters, although uniqueness of the invariant measure holds, we do not even know whether there is uniqueness of the stationary solution of the hydrodynamic equation, and simulations show that for Neumann type boundary conditions there are several stationary profiles. However, we believe that a more general hydrostatic principle in the spirit of the one proved in \cite{article} is valid. 

\section{Notation and results}
\subsection{The microscopic model}
The dynamics of our interacting particle system is given by three generators, one for the exchange dynamics, one for the generalized contact dynamics and one for the boundary dynamics. In order to explicit each one of those generators, let us give a few notations. Let $N\in \N$ and $d\geq 1$. For $p\geq 1$, we write $\mathbb{T}_N^p$,  resp.  $\mathbb{T}^p$, the discrete, resp. continuous, torus $(\Z/N\Z)^p$, $(\R/\Z)^p$. Denote by $B_{N}=\{-N,\cdots,N\}\times \Tnd$ the bulk and $\Gamma_{N}=\{-N,N\}\times \Tnd$, resp. $\Gamma_{N}^+=\{N\}\times \Tnd$, resp. $\Gamma_{N}^-=\{-N\}\times \Tnd $, the boundary, resp. right-hand side boundary, resp. left-hand side boundary, of the bulk. Denote $B=(-1,1)\times \Td$ the continuous counter part of the bulk, $\overline{B}=[-1,1]\times \Td$ its closure, $\Gamma=\{-1,1\}\times \Td$, $\Gamma^{-}=\{-1\}\times \Td$ and $\Gamma^{+}=\{1\}\times \Td $. 

The microscopic state space is denoted by ${\Omega}_{N}:=\{0,1,2,3\}^{B_{N}}$ and its elements, called configurations, are denoted by $\e$. Therefore, for $x\in B_{N}$, $\e(x)\in \{0,1,2,3\}$. To describe the dynamics of our model, we will use the correspondence introduced in \cite{KMS} between the state space $\Omega_{N}$ and $\widehat{\Sigma}_{N}:=(\{0,1\}\times \{0,1\})^{B_{N}}$ where the correspondence between an element $\xiw \in \widehat{\Sigma}_{N}$ and $\e \in \Omega_{N}$ is given as follows: for $x\in B_{N}$,
\begin{equation}\label{corresp}
    \begin{split}
        &\e(x)=0 ~ \Longleftrightarrow~  (1-\xi(x))(1-\omega(x))=1,  \\
        & \e(x)=1 ~ \Longleftrightarrow~  \xi(x)(1-\omega(x))=1,\\
        & \e(x) = 2 ~ \Longleftrightarrow~  (1-\xi(x))\omega(x) = 1,\\
        &\e(x)=3 ~ \Longleftrightarrow~  \xi(x)\omega(x) = 1.
    \end{split}
\end{equation}
In other words, $\omega\in \{0,1\}$ represents the presence of sterile insects, and $\xi \in \{0,1\}$ that of wild ones on a given site, i.e., $(\xi(x),\omega(x))=(0,0)$ if $x$ is in state $0$, $(1,0)$ if it is in state $1$, $(0,1)$ if it is in state $2$ and $(1,1)$ if it is in state $3$. Also, in order to describe the evolution of the density of sites in state $1$, $2$, $3$, resp $0$, we define for $x$ in $B_N$ and a configuration $\e\in \Omega_N$ with associated configuration $\xiw \in \whsN$,
\begin{equation}\label{fonctiondexiw}
    \left\{
    \begin{array}{ll}
         \e_1(x):=\mathds{1}_{\{\e(x)=1\}}=\xi(x)(1-\omega(x)) ,\\ 
        \e_2(x):=\mathds{1}_{\{\e(x)=2\}}=(1-\xi(x))\omega(x),\\
        \e_3(x):=\mathds{1}_{\{\e(x)=3\}}=\xi(x)\omega(x),\\
        \e_0(x):=\mathds{1}_{\{\e(x)=0\}}=(1-\xi(x))(1-\omega(x)).
    \end{array}
\right.
\end{equation}
Finally, we also express the correspondence \eqref{corresp} by the following application from $\whsN$ to $\Omega_N$: 
\begin{equation}\label{applicorresp}
\e=\e\xiw,~ ~ \text{where, for any}~ x\in B_N,~ ~ \e(x) = 2\omega(x)+\xi(x).
\end{equation}
\begin{itemize}
    \item [•] \textbf{Generator for the exchange mechanism:} it corresponds to the usual stirring mechanism where each site has an exponential clock with rate $D$ and independent from all the other clocks, where $D$ is a  fixed positive parameter. When the clock rings, a neighbouring site is chosen uniformly at random and the states of both sites are exchanged. The action of the generator on functions $f:\whsN \rightarrow \R$ is therefore given by:
    \begin{equation}\label{Exchange}
    \mathcal{L}_{N}f\xiw :=\sum_{k=1}^d\sum_{(x,x+e_{k})\in B_{N}^2}D\Big(f(\xi^{x,x+e_{k}},\omega^{x,x+e_{k}})-f\xiw \Big)
\end{equation}
where $(e_1,\cdots,e_d)$ is the canonical basis of $\Z^d$ and for $\zeta \in \{0,1\}^{B_N}$ and $x,y\in B_N$, $\zeta^{x,y}$ is the configuration obtained from $\zeta$ by exchanging the occupation variables $\zeta(x)$ and $\zeta(y)$, i.e,
 \[  \zeta^{x,y}(z)=\left\{
    \begin{array}{ll}
         \zeta(x)~ ~ \text{if}~ ~ z=y,\\ 
        \zeta(y)~ ~ \text{if}~ ~ z=x,\\
        \zeta(z)~ ~ \text{otherwise}.
    \end{array}
\right.\]
    \item[•] \textbf{Generator for the generalized contact process in the bulk:} following the description of the generalized contact process in the introduction, let us give its rates in the bulk. For $\e\in \Omega_{N}$, $x\in B_{N}$ and $i \in \{1,3\}$ denote by $n_i(x,\e)$ the number of neighbours of $x$ in state $i$, that is, $n_i(x,\e) = \underset{y\sim x}{\sum}\e_i(y)$, where $x\sim y$ means that $x$ and $y$ are neighbouring sites in $B_N$. Births and arrivals of sterile individuals at $x$ happen at the following rates:
\begin{equation}\label{CPRS}
\begin{split}
    & 0 \rightarrow 1 ~ \text{at rate}~ \lambda_{1}n_{1}(x,\e) + \lambda_{2}n_{3}(x,\e),~~~ 2 \rightarrow 3~ \text{at rate}~\lambda_{1}n_{1}(x,\e) + \lambda_{2}n_{3}(x,\e),\\
    & \text{and}~~~~~ 0 \rightarrow 2 ~ \text{at rate}~ r,~~~ 1\rightarrow 3~ \text{at rate}~ r,
\end{split}
\end{equation}
Deaths of individuals at $x$ happen at rate $1$:
\begin{equation}\label{CPRS2}
\begin{split}  
& 1\rightarrow0 ~ \text{at rate}~ 1,~ ~ 2\rightarrow 0~ \text{at rate}~1,~ ~  3 \rightarrow 1 ~ \text{at rate}~1 ,~ ~  3\rightarrow 2~ \text{at rate}~1,
\end{split}
\end{equation}
 Therefore, using the correspondences \eqref{corresp}, \eqref{fonctiondexiw} and \eqref{applicorresp}, the generator $\mathbb{L}_{N}= \mathbb{L}_{N,\lambda_{1},\lambda_{2},r} $ of the generalized contact process acts as follows on functions $f:\whsN \rightarrow \R$: for $\xiw$ in $\whsN$ and $\e=\e\xiw$, we have
\begin{equation}\label{genCPRS}
    \mathbb{L}_{N}f\xiw = \sum_{x\in B_{N}}\mathbb{L}_{B_{N}}^xf\xiw \, ,
\end{equation}
where for $x\in B_{N}$,
\begin{equation}\label{sousgenCPRS}
\begin{split}
    \mathbb{L}_{B_{N}}^xf\xiw &:= \Big(r(1-\omega(x))+\omega(x) \Big)\Big[f(\xi,\sigma^x\omega)-f\xiw \Big]\\
    &+ \Big(\beta_{B_{N}}(x,\xi,\omega)(1-\xi(x))+\xi(x) \Big)\Big[f(\sigma^x\xi,\omega)-f\xiw \Big],
\end{split}
\end{equation}
\begin{equation*}
    \beta_{B_{N}}(x,\e) := \lambda_{1}n_1(x,\e) + \lambda_{2}n_3(x,\e) 
\end{equation*}
where, for $\zeta\in \{0,1\}^{B_N}$, $\sigma^x\zeta$ is the configuration obtained from $\zeta$ by flipping the configuration at $x$, i.e.
 \[  \sigma^x\zeta(z)=\left\{
    \begin{array}{ll}
         1-\zeta(x)~ ~ \text{if}~ ~ z=x,\\ 
        \zeta(z)~ ~ \text{otherwise}.
    \end{array}
\right.\]

\item[•]\textbf{Generator for the boundary dynamics:} the generator of the dynamics at the boundary  is parametrised by $\wht=(\ttl,\ttr)$ with $\ttl,\ttr \geq 0$ and a positive function $\widehat{b}=(b_{1},b_{2},b_{3}):\Gamma \rightarrow \R_{+}^3$ satisfying the following conditions: there exists a neighbourhood $V$ of $\overline{B}$ in $\R\times \mathbb{T}^{d-1}$ and a smooth function $\widehat{g}=(g_{1},g_{2},g_{3}):V\rightarrow (0,1)^3$ in $\mathcal{C}^2(V,\R)$ (the space of twice differentiable functions), with
\begin{equation}\label{Condition1}
    \exists~ c^*,C^*>0,~ ~ 0<c^*<\underset{1\leq i \leq 3}{\min}~ |g_{i}|\leq \underset{1\leq i \leq 3}{\max}~ |g_{i}|\leq C^*<1,~~\text{and}~~g_1+g_2+g_3 < 1 
\end{equation}
and, the restriction of $\widehat{g}$ to $\Gamma$ is equal to $\widehat{b}$. The dynamics at the boundary can then be described as follows: a site $x\in \Gamma_N^-$, resp. $x\in \Gamma_N^+$,  flips from state $i\in \{0,1,2,3\}$ to state $j\in \{0,1,2,3\}\setminus \{i\}$ at rate $N^{-\ttl}b_j(x/N)$, resp. $N^{-\ttr}b_j(x/N)$ . In order to express the generator of the boundary dynamics, we make use of $\e_i=\e_i\xiw$ for $i\in \{0,1,2,3\}$ which is the configuration in $\{0,1\}^{B_N}$ obtained from $\xiw\in \whsN$ according to \eqref{fonctiondexiw}.
For $f:\whsN \rightarrow \R$, the boundary generator acts on $f$ as follows: for $\xiw$ in $\whsN$, we have
\begin{equation}\label{genbound}
\begin{split}
       L_{\widehat{b},\wht,N}f\xiw 
    &=N^{-\ttl}\sum_{i=0}^3\sum_{x\in \Gamma_{N}^-}b_i(x/N)\Big(f(\sigma_{i,x}\xiw)-f\xiw \Big)\\
    &+ N^{-\ttr}\sum_{i=0}^3\sum_{x\in \Gamma_{N}^+}b_i(x/N)\Big(f(\sigma_{i,x}\xiw)-f\xiw \Big),
\end{split}
\end{equation}
where $b_{0}(x/N) := 1- \sum_{i=1}^{3}b_{i}(x/N)$ and with $\sigma_{i,x}\xiw := \sigma_{i,x}\e\xiw$, the configuration in $\whsN$ associated to $\sigma_{i,x}\e$, where
\begin{equation*}
    \sigma_{i,x}\e(z):=\left\{
    \begin{array}{ll}
         i~ ~ \text{if}~ ~ z=x,\\ 
        \e\xiw(z)~ ~ \text{otherwise}
    \end{array}
\right.
\end{equation*}
with $\e\xiw$ as defined in \eqref{applicorresp}.
\end{itemize}
Fix a time horizon $T>0$ and denote by $\{(\xi_t^N, \omega_t^N),~ t\in[0,T]\}$ the Markov process associated to the generator
\begin{equation}\label{Generator}
    L_{N} :=N^2 \mathcal{L}_{N}+ N^2L_{\widehat{b},\wht,N} + \mathbb{L}_{N}. 
\end{equation}
Let $D_{\whsN}([0,T])$ be the path space of càdlàg trajectories with values in $\whsN$. Given a measure $\mu_{N}$ on $\whsN$, denote by $\mathbb{P}_{\mu_{N}}$ the probability measure on $D_{\whsN}([0,T])$ induced by $\mu_{N}$ and $(\xi_{t}\allowbreak,\omega_t)_{t\in [0,T]}$,
and denote by $\mathbb{E}_{\mu_{N}}$ the expectation with respect to $\mathbb{P}_{\mu_{N}}$.\\\\
\textbf{\textit{Invariant measures for the exchange and boundary dynamics:}}\\
Consider $\widehat{\alpha}=(\alpha_{1},\alpha_{2},\alpha_{3}):\overline{B}\longrightarrow (0,1)^3$ a smooth function satisfying, for $c^*,C^*>0$ given in \eqref{Condition1},
\begin{equation}\label{a)}
    0<c^*<\min_{1\leq i\leq 3}|\alpha_{i}|\leq  \max_{1\leq i\leq 3}|\alpha_{i}|\leq C^*<1,~~\alpha_1 + \alpha_2 + \alpha_3 < 1.
\end{equation}
Denote by $\nu_{\widehat{\alpha}}^N$ the Bernoulli compound product measure on $B_{N}$ with parameter $\widehat{\alpha}$: for $\xiw\in \whsN$,
\begin{equation}
    \nualph\xiw:= \frac{1}{Z_{\widehat{\alpha},N}}\exp\Big(\sum_{i=1}^3\sum_{x\in B_{N}}\Big(\log \frac{\alpha_{i}(x/N)}{\alpha_{0}(x/N)} \Big)\e_{i}(x) \Big), 
\end{equation}
where $\alpha_{0}=1-\alpha_{1}-\alpha_{2}-\alpha_{3}$ and
where $Z_{\widehat{\alpha},N}$ is the normalizing constant
\begin{equation*}
    Z_{\widehat{\alpha},N}= \prod_{x\in B_N}\frac{1}{\alpha_0(x/N)}.
\end{equation*}
Note that $\nualph$ is such that for every $1\leq i \leq 3$ and $x\in B_{N}$,
\begin{equation*}
    \mathbb{E}_{\nualph}[\e_{i}(x)] = \alpha_{i}(x/N).
\end{equation*}
One can verify the following statements:
    \begin{itemize}
        \item Consider $\widehat{\alpha}$ a smooth profile satisfying \eqref{a)} and
        
\begin{equation}\label{b)}
    \forall x\in \Gamma,~ ~ \widehat{\alpha}(x)=\widehat{b}(x).
\end{equation}
    Then, $\nualph$ is an invariant and reversible measure for the boundary dynamics: for any $f:\whsN\rightarrow \R$,
\begin{equation}\label{Invbord}
    \int_{\whsN}L_{\widehat{b},\wht,N}f\xiw d\nualph\xiw=0.
\end{equation}
        \item Consider $\widehat{\alpha}$ a constant profile. Then $\nualph$ is an invariant and reversible measure for the exchange dynamics so for any $f:\whsN\rightarrow \R$,
        \begin{equation}\label{Invech}
    \int_{\whsN}\mathcal{L}_{N}f\xiw d\nualph\xiw=0.
    \end{equation}
\end{itemize}
For any $\widehat{\theta}\in (\R^+)^2$, at fixed $N$, the dynamics defined by \eqref{Generator} is irreducible and the state space is finite. Therefore, the dynamics has a unique invariant probability measure that in the sequel we denote by $\mu_{N}^{ss}\allowbreak (\widehat{\theta}\allowbreak )$.\\\\
\textbf{\textit{Useful (in)equalities:}} For any $A,B>0$,
\begin{equation}\label{useful1}
    A(B-A) = -\frac{1}{2}(B-A)^2+ \frac{1}{2}(B^2-A^2).
\end{equation}
For any $a,b,A$ and $N\in \N$,
\begin{equation}\label{useful2}
    2ab \leq \frac{N}{A}a^2 + \frac{A}{N}b^2.
\end{equation}
For any sequences of positive numbers $(a_N)_{N\geq 1}$ and $(b_N)_{N\geq 1}$
\begin{equation}\label{Conv}
    \underset{N \rightarrow \infty }{\overline{\lim}}~ \frac{1}{N}\log(a_{N}+b_{N}) \leq \max\Big(\underset{N \rightarrow \infty }{\overline{\lim}}~ \frac{1}{N}\log  a_{N},\underset{N \rightarrow \infty }{\overline{\lim}}~ \frac{1}{N}\log b_{N} \Big).
\end{equation}

\subsection{The macroscopic equations}
Let us first introduce a few notations. We will write functions with values in $\R$ with Roman letters (for instance $G$) and the ones with values in $\R^3$ with letters with a hat (for instance $\whg$) . For $n,m\in \mathbb{N}$, denote by $\mathcal{C}^{n,m}([0,T]\times \overline{B})$ the space of functions that are $n$ times differentiable in time and $m$ times differentiable in space, $\mathcal{C}_{0}^{n,m}$, resp. $\mathcal{C}_{0,-}^{n,m} $, resp. $\mathcal{C}_{0,+}^{n,m}$, the ones in $\mathcal{C}^{n,m}([0,T]\times \overline{B})$ which are zero on $\Gamma$, resp. $\Gamma^-$, resp. $\Gamma^+$. Denote by $\mathcal{C}_{k}^{\infty}(B)$ the space of smooth functions with compact support in $B$, $\mathcal{C}^{m}(\overline{B}) $ the space of functions that are $m$ times differentiable in space with $\mathcal{C}_0^{m}(\overline{B})$, resp. $\mathcal{C}_{0,-}^{m}(\overline{B}) $, resp. $\mathcal{C}_{0,+}^{m}(\overline{B}) $, those which are zero on $\Gamma$, resp. $\Gamma^-$, resp. $\Gamma^+$, and $\mathcal{C}(\overline{B})$ the space of continuous functions on $\overline{B}$ . For $\wht=(\ttl,\ttr)$ in $(\R^+)^2$, we will use the following notation to denote these functional spaces:
\begin{equation}
    \mathcal{C}_{\wht}:=\left\{
    \begin{array}{ll}
         \mathcal{C}_{0}^{1,2}~ ~ \text{if}~ ~ \wht \in [0,1)^2,\\ 
        \mathcal{C}_{0,-}^{1,2} ~ ~ \text{if}~ ~ \ttl\in [0,1), \ttr \geq 1,\\
        \mathcal{C}_{0,+}^{1,2} ~ ~ \text{if}~ ~ \ttr\in [0,1), \ttl \geq 1,\\
        \mathcal{C}^{1,2} ~ ~ \text{if}~ ~ \ttr, \ttl \geq 1.
    \end{array}
\right.
\end{equation}
Let $\langle~ , ~ \rangle$ be the $L^2(\overline{B})$ inner product and $\langle~ , ~ \rangle_{\mu}$ the inner product with respect to a measure $\mu$. For $\widehat{f}=(f_1,f_2,f_3)$ and  $\widehat{g}=(g_1,g_2,g_3)$ in $\big(L^2(\overline{B})\big)^3$, $\langle\widehat{f},\widehat{g}\rangle=\sum_{i=1}^3\langle f_i,g_i\rangle $. Recall that $(e_1,\cdots, e_d)$ is the canonical basis of $\Z^d$. Introduce the Sobolev space $\mathcal{H}^1(B)$ which we recall to be the set of functions $g\in L^2(\overline{B})$ such that for any $1\le k\le d$, there exists an element denoted by $\partial_{e_k} g\in L^2(B)$ such that for any $\varphi$ in $\mathcal{C}_{k}^{\infty}(B)$,
\begin{equation*}
    \langle\partial_{e_{k}}\varphi,g\rangle = -\langle\varphi, \partial_{e_k} g\rangle,
\end{equation*}
where $\partial_{e_{k}}\varphi$ is the usual partial derivative. The $\mathcal{H}^1(B)$ norm is then defined as follows:
\begin{equation*}
    \|g\|_{\mathcal{H}^1(B)} = \Big(\|g\|_{L^2(B)}^2+ \sum_{k=1}^{d}\|\partial_{e_k} g\|_{L^2(B)}^2 \Big)^{1/2} = \Big(\|g\|_{2}^2+ \sum_{k=1}^{d}\|\partial_{e_k} g\|_{2}^2 \Big)^{1/2} .
\end{equation*}

We will write $\|g\|_{2}^2$ instead of $\|g\|_{L^2(B)}^2$ when no confusion is possible. Introduce $\mathcal{H}_0^1(B)$, the closure of $\mathcal{C}_{k}^{\infty}(B)$ in $\mathcal{H}^1(B)$ for that norm. Denote by $L^2\Big([0,T],\allowbreak\mathcal{H}^1(B)\Big)$ the space of functions $f:[0,T]\rightarrow\mathcal{H}^1(B)$ such that 
\[\int_{0}^T \|f(t,.)\|_{\mathcal{H}^1(B)}^2 dt < \infty.\]

In order to define the value of an element $G$ in $\mathcal{H}^1(B)$ at the boundary, we need to introduce the notion of trace of functions on such Sobolev spaces.
The trace operator in the Sobolev space \allowbreak $\mathcal{H}^1(B)$ can be defined as a bounded linear operator, \allowbreak $\tr : \mathcal{H}^1(B)\to L^2(\Gamma)$ such that $\tr$  extends the classical trace, that is
$\tr(G)=G_{|_\Gamma}$, for any $G\in \mathcal{H}^1(B)\cap {\mathcal C}(\overline{B})$.
We refer to \cite[Part II Section 5]{evans_partial_2010} for a detailed survey on the trace operator.

In the sequel, for $s,u\in [0,T] \times \Gamma$ and for any $f \in L^2\Big([0,T],\mathcal{H}^1(B)\Big)$, $f(s,u)$ stands for $\tr(f(s,.))(u)$. Notice that $\mathcal{H}_0^1(B)$ is the set of elements of $\mathcal{H}^1(B)$ with zero trace.

To lighten notations, for a function $\whg$ depending on time and space we will often write $\whg_s$ instead of $\whg(s,.)$. Finally, for $\wht\in (\R^+)^2$, introduce the following linear functional on $L^2\Big([0,T],\mathcal{H}^1(B)\Big)$ parameterised by a test function $\whg$ in $\mathcal{C}_{\wht}$ : for $t\in [0,T]$,
\begin{equation}\label{fonctionnelle}
\begin{split}
        I_{\whg}(\whr)(t)  &:= ~ \langle\whr_t,\whg_t\rangle -\langle\whr_0,\whg_0\rangle  -\int_{0}^t\langle\whr_s,\partial_{s}\whg_s\rangle ds-D\int_{0}^t\langle\whr_s,\Delta \whg_s\rangle ds
        -\int_{0}^t\langle\widehat{F}(\whr_{s}),\whg_s\rangle ds
\end{split}
\end{equation}
where $\widehat{F}=(F_1(\whr),F_2(\whr),F_3(\whr)):[0,1]^3 \rightarrow \R^3$ is defined by  
\begin{equation} \label{F}
\left\{
    \begin{array}{ll}
          F_1(\rho_1,\rho_2,\rho_3) = 2d(\lambda_1\rho_1+\lambda_2 \rho_3)\rho_0 + \rho_3-(r+1)\rho_1  \\
           F_2(\rho_1,\rho_2,\rho_3)=r\rho_0+\rho_3-2d(\lambda_1\rho_1+\lambda_2\rho_3)\rho_2-\rho_2\\
          F_3(\rho_1,\rho_2,\rho_3) =2d(\lambda_1\rho_1+\lambda_2 \rho_3)\rho_2+r\rho_1-2\rho_3,
           \end{array}
        \right.
\end{equation}
with $\rho_0=1-\rho_1-\rho_2-\rho_3$.

The hydrodynamic equation is a system of coupled reaction diffusion equations with mixed boundary conditions depending on $\wht$. If $\ttl$, resp. $\ttr$, belongs to $[0,1)$, the boundary conditions are of Dirichlet type on $\Gamma^-$, resp. $\Gamma^+$. If $\ttl=1$, resp $\ttr=1$, they are of Robin type on $\Gamma^-$, resp. $\Gamma^+$. If $\ttl>1$, resp. $\ttr >1$, they are of Neumann type on $\Gamma^-$, resp. $\Gamma^+$. We will focus on the cases where $\ttl\in [0,1),\ttr=1$ resp. $\ttl>1,\ttr=1$, corresponding to a Dirichlet boundary condition on $\Gamma^-$ and a Robin boundary condition on $\Gamma^+$, resp. a Neumann boundary condition on $\Gamma^-$ and a Robin boundary condition on $\Gamma^+$. All the other cases can be adapted from those ones (see Table \ref{Tab}).
\begin{Def} \label{Weakdyn}
Let $\widehat{\gamma}:B \rightarrow \R^3$ be a continuous function. 
\begin{itemize}
    \item [•] Hydrodynamic equation for $\ttl \in [0,1)$ and $\ttr=1$. \\
    A bounded function $\whr=(\rho_{1},\rho_{2},\rho_{3}):[0,T]\times B \rightarrow \R^3$ is a weak solution of the Dirichlet $+$ Robin mixed boundary problem
\begin{equation} \label{D+R}
    \left\{
    \begin{array}{ll}
        \partial_{t}\whr = D\Delta \whr + \widehat{F}(\whr)~ \text{in}~ B\times (0,T),\\ 
        \whr(0,.)=\widehat{\gamma}~ \text{in}~ B,\\
        \whr(t,.)_{|\Gamma^-}=\widehat{b}~ \text{for}~ 0<t\leq T,\\
        \partial_{e_{1}}\whr(t,.)_{|\Gamma^{+}}=\frac{1}{D}(\widehat{b}-\whr)_{|\Gamma^{+}}~ \text{for}~ 0<t\leq T,
    \end{array}
\right.
\end{equation}
if, for any $1\leq i\leq 3$, 
\begin{equation}\label{conditiona}
    \rho_{i}\in L^2\Big([0,T],\mathcal{H}^1(B)\Big),
\end{equation} 
for any function $\whg \in \mathcal{C}_{\wht}$, for any $t\in [0,T]$,
\begin{equation}
    \begin{split}
    &I_{\whg}(\whr)(t)+ D\sum_{i=1}^3\int_{0}^t\int_{\Gamma^-}b_{i}(r)(\partial_{e_{1}}G_{i,s})(r)n_{1}(r).dS(r)ds\\
    &+  D\sum_{i=1}^3\int_{0}^t \int_{\Gamma^+}\rho_{i}(s,r)(\partial_{e_{1}}G_{i,s})(r)n_{1}(r).dS(r)ds\\
    &-\sum_{i=1}^3\int_{0}^t\int_{\Gamma^+}G_{i}(r)(b_{i}(r)-\rho_{i}(s,r))n_{1}(r).dS(r)ds=0, \label{weakD+R}
    \end{split}
\end{equation}
where $n_{1}(r)$ is the outward normal unit vector to the boundary surface $\Gamma$ and $dS(r)$ is an element of surface on $\Gamma$. And,
\begin{equation} \label{CI}
    \whr(0,.)=\widehat{\gamma}(.) ~ ~ \text{almost surely.}
\end{equation}
    \item [•] Hydrodynamic equation for $\ttl>1$ and $\ttr =1$.\\
    A bounded function $\whr=(\rho_{1},\rho_{2},\rho_{3}):[0,T]\times B \rightarrow \R^3$ is a weak solution of the Neumann $+$ Robin mixed boundary problem
\begin{equation} \label{N+R}
    \left\{
    \begin{array}{ll}
        \partial_{t}\whr = D\Delta \whr + \widehat{F}(\whr)~ \text{in}~ B\times (0,T),\\ 
        \whr(0,.)=\widehat{\gamma}~ \text{in}~ B,\\
        \partial_{e_{1}}\whr(t,.)_{|\Gamma^-}=0~ \text{for}~ 0<t\leq T\\
        \partial_{e_{1}}\whr(t,.)_{|\Gamma^{+}}=\frac{1}{D}(\widehat{b}-\whr)_{|\Gamma^{+}}~ \text{for}~ 0<t\leq T,
    \end{array}
\right.
\end{equation}
if $\whr$ satisfies conditions \eqref{conditiona} and \eqref{CI} as well as the following condition \eqref{WeakN+R}: for any $\whg\in \mathcal{C}_{\wht}$, for any $t\in [0,T]$,
\begin{equation}\label{WeakN+R}
    \begin{split}
        &I_{\whg}(\whr)(t) + D\sum_{i=1}^3\int_{0}^t \int_{\Gamma^-}\rho_{i}(s,r)(\partial_{e_{1}}G_{i,s})(r)n_{1}(r).dS(r)ds\\
        &+D\sum_{i=1}^3\int_{0}^t \int_{\Gamma^+}\rho_{i}(s,r)(\partial_{e_{1}}G_{i,s})(r)n_{1}(r).dS(r)ds\\
        &-\int_{\Gamma^+}G_{i}(r)(b_{i}(r)-\rho_{i}(s,r))n_{1}(r).dS(r)ds=0.
    \end{split}
\end{equation}

\end{itemize}
\end{Def}
\begin{rem}
    In \eqref{weakD+R}, the integral over $\Gamma^-$ corresponds to the Dirichlet boundary condition. In \eqref{WeakN+R} the integral over $\Gamma^-$ comes from an integration by parts of the terms involved in the bulk. Both in \eqref{weakD+R} and \eqref{WeakN+R} the first integral over $\Gamma^+$ comes from an integration by parts of the terms involved in the bulk and the second integral over $\Gamma^+$ corresponds to the Robin boundary condition.
\end{rem}

\begin{figure}
    \centering
    \renewcommand{\arraystretch}
{2}
    \begin{tabular}{|R{1.5cm}|C{1.5cm}|L{1.5cm}|L{1.5cm}|}
  \hline
   \begin{center}
       $(\ttl,\ttr)$ 
   \end{center}   & \begin{center}
       $\ttr\in [0,1)$
   \end{center} & \begin{center}
       $\ttr=1$
   \end{center} & \begin{center}
       $\ttr>1$
   \end{center} \\
    \hline
    \begin{center}
        $\ttl\in [0,1)$
    \end{center} & \begin{center} (D ; D)\end{center} & \begin{center}
        (D ; R)
    \end{center} & \begin{center}
        (D ; $N_e$)
    \end{center} \\
   \hline
   \begin{center}
      $\ttl=1$  
   \end{center}
   &\begin{center}
       (R ; D)
   \end{center} & \begin{center}
       (R ; R)
   \end{center}& \begin{center}
       (R ; $N_e$)
   \end{center}\\
    \hline
   \begin{center}
       $\ttl>1$
   \end{center} & \begin{center}
       ($N_e$ ; D)
   \end{center} &\begin{center}
       ($N_e$ ; R)
   \end{center}& \begin{center}
       ($N_e$ ; $N_e$)
   \end{center}\\
    \hline
\end{tabular}
    \caption{Mixed boundary conditions depending on the values of $\ttl$ and $\ttr$. The letters D, resp. R, resp. $N_e$ denote a Dirichlet, resp. Robin, resp. Neumann boundary condition. For instance (D ; $N_e$) denotes a left-hand side Dirichlet boundary condition and a right-hand side Neumann boundary condition. }
    \label{Tab}
\end{figure}
\newpage
\begin{Def}[Stationary solution of the hydrodynamic equation]\

\begin{itemize}
    \item [•] A function $\overline{\rho} = (\overline{\rho}_{1},\overline{\rho}_{2},\overline{\rho}_{3}  )$ in $\big(\mathcal{H}^1(B)\big)^3$ is a stationary solution of \eqref{D+R} if for every function $\whg=(G_{1},G_{2},G_{3}) \in \mathcal{C}_{0,-}^2(\overline{B})^3$, for all $1\leq i \leq 3$,
\begin{equation}\label{stat D+R}
\begin{split}
    &D\langle\rho_i,\Delta G_i\rangle + \langle F_i(\whr),G_i\rangle
    =D\int_{\Gamma^-}b_{i}(r)(\partial_{e_{1}}G_{i})(r)n_{1}(r).dS(r)\\
    &+ D\int_{\Gamma^+}\rho_{i}(r)(\partial_{e_{1}}G_{i})(r)n_{1}(r).dS(r)-\int_{\Gamma^+}G_{i}(r)(b_{i}(r)-\rho_{i}(r))n_{1}(r).dS(r).
\end{split}
\end{equation}
In other words, $\overline{\rho}$ is a stationary solution of \eqref{D+R} if $\widehat{\rho}(t,u)\equiv \overline{\rho}(u)$ is a solution of \eqref{D+R}.
\item[•] A function $\overline{\rho} = (\overline{\rho}_{1},\overline{\rho}_{2},\overline{\rho}_{3}  )$ in $\big(\mathcal{H}^1(B)\big)^3$ is a stationary solution of \eqref{N+R} if for every function $\whg=(G_{1},G_{2},G_{3}) \in \mathcal{C}^2(\overline{B})^3$, for all $1\leq i \leq 3$,
    \begin{equation}\label{stat N+R}
\begin{split}
    &D\langle\rho_i,\Delta G_i\rangle + \langle F_i(\whr),G_i\rangle= D\int_{\Gamma^-}\rho_{i}(r)(\partial_{e_{1}}G_{i})(r)n_{1}(r).dS(r)\\
    &+D\int_{\Gamma^+}\rho_{i}(r)(\partial_{e_{1}}G_{i})(r)n_{1}(r).dS(r) -\int_{\Gamma^+}G_{i}(r)(b_{i}(r)-\rho_{i}(r))n_{1}(r).dS(r).
\end{split}
\end{equation}
In other words, $\overline{\rho}$ is a stationary solution of \eqref{D+R} if $\widehat{\rho}(t,u)\equiv \overline{\rho}(u)$ is a solution of \eqref{N+R}.

\end{itemize}

\end{Def}

\subsection{Hydrodynamic and hydrostatic results}
Let us state the main results proved in this paper. The first one (Theorem \ref{HL}) establishes the hydrodynamic limit of the dynamics defined above and the second one (Theorem \ref{T hydrostat}) establishes its hydrostatic limit.
Before stating Theorem \ref{HL}, let us first define the empirical measure  $\widehat{\pi}^N(\xi,\omega)=\widehat{\pi}^N$ associated to a given configuration $(\xi,\omega)$. Recall how in \eqref{fonctiondexiw}, we built $\e_i\in \{0,1\}^{B_N} $ from $\xiw \in \whsN$ for $0 \leq i \leq 3$. Then,
\begin{equation*}
\begin{split}
        \widehat{\pi}^N(\xi,\omega)&:=\Big(\frac{1}{N^d}\sum_{x\in B_N}\e_{1}(x)\delta_{x/N},\frac{1}{N^d}\sum_{x\in B_N}\e_{2}(x)\delta_{x/N},\frac{1}{N^d}\sum_{x\in B_N}\e_{3}(x)\delta_{x/N} \Big)\\
        &=: (\pi_{1}^N(\xi,\omega),\pi_{2}^N(\xi,\omega),\pi_{3}^N (\xi,\omega))
\end{split}
\end{equation*}
where $\delta_{x/N}$ is the point mass at $x/N$. For $\widehat{G}$ in  $\mathcal{C}^{1,2}([0,T]\times B)$ and $t\geq 0$, write

\begin{equation*}
    \langle\widehat{\pi}^N,\widehat{G}_t\rangle:=\sum_{i=1}^3\langle\pi_{i}^N,G_{i}(t,.)\rangle=\sum_{i=1}^3\frac{1}{N^d}\sum_{x\in B_{N}}\e_{i}(x)G_{i}(t,\frac{x}{N}).
\end{equation*}
The empirical measure is therefore the triplet of empirical measures associated to the density of sites in state $1$, resp. $2$, resp. $3$. Denote by $\mathcal{M}$ the set of positive measures on $B$ with total mass bounded by 3 (because for any configuration $\e$, $\pi^N(\e)(B) = \frac{|\Lambda_N|}{N^d}\leq 3$). The process 
$(\widehat{\pi}_{t}^N)_{t\geq 0} = (\widehat{\pi}^N(\xi_t,\omega_t))_{t\geq 0} $, is a Markov process with state space ${\mathcal{M}^3}$ and its trajectories belong to $D([0,T],{\mathcal{M}^3})$, the path space of càdlàg time trajectories with values in $\mathcal{M}^3$. We endow the path space with the Skorohod topology (we refer to \cite{billingsley_convergence_1999} for a detailed presentation on the Skorohod topology). For $\wht \in (\R^+)^2$ and $\mu_{N}$ a probability measure on $\whsN$, denote by $Q_{N}^{\wht}= \mathbb{P}_{\mu_{N}}(\widehat{\pi}^N)^{-1}$ the law of the process $(\widehat{\pi}^N\xit)_{t\geq 0}$ when $(\xi_0,\omega_0)\sim \mu_{N}$ and where $(\xi_t,\omega_t)_{t\geq 0}$ evolves according to the dynamics given by \eqref{Generator}, with parameter $\wht$ for the boundary reservoirs. The hydrodynamic result states as follows:

\begin{thm}[Hydrodynamic limit] \label{HL} For any sequence of initial probability measure $(\mu_{N})_{N\geq 1}$ on $\whsN$, the sequence of probability measures $(Q_{N}^{\wht})_{N\geq 1}$ is weakly relatively compact 
and  all its converging subsequences converge to   some limit  $Q^{\wht,*}$ concentrated on the set of weak solutions of the
hydrodynamic equation that are in $L^2([0,T];{\mathcal H}^1(B)) $, in the sense of Definition \ref{Weakdyn}. Furthermore, if there is an initial continuous profile $\widehat{\gamma}:B \rightarrow [0,1]^3$ such that for any $\delta>0$ and any $\whg \in \mathcal{C}_{k}^{\infty}(B)$,

\begin{equation*}
    \underset{N \rightarrow \infty}{\limsup}~ \mu_{N}\Big[\Big|\langle\widehat{\pi}^{N},\widehat{G}\rangle-\langle\widehat{\gamma},\widehat{G}\rangle \Big|>\delta \Big] = 0, 
\end{equation*}
then, $(Q_{N}^{\wht})_{N\geq 1}$ converges to the Dirac mass $Q^{\wht}$ concentrated on the unique weak solution $\whr$ of the boundary value problem associated to $\wht$ and with initial condition $\widehat{\gamma}$.
Therefore, for any $t\in [0,T]$, $\delta>0$ and any function $\whg \in \mathcal{C}_{c}^{1,2}([0,T]\times \overline{B})$,
\begin{equation*}
    \underset{N \rightarrow \infty}{\limsup}~\mathbb{P}_{\mu_{N}}\Big[\Big|\langle\widehat{\pi}_{t}^N,\whg_t \rangle - \langle\whr_t,\whg_t\rangle \Big| > \delta\Big] = 0.
\end{equation*}
\end{thm}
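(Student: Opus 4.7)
The plan is to follow the classical scheme of the entropy method adapted to the three regimes of $\wht$. First, I would prove tightness of $(Q_N^{\wht})_{N\geq 1}$ in the Skorohod topology on $D([0,T],\mathcal{M}^3)$. By a standard criterion it suffices to show tightness of the real-valued process $\langle\pitNi,G_i\rangle$ for each smooth $G_i$ and each $i\in\{1,2,3\}$. Dynkin's formula produces the decomposition
\[
\langle\pitNi,G_i\rangle = \langle\pi_0^{N,i},G_i\rangle + \int_0^t L_N \langle\pi_s^{N,i},G_i\rangle\,ds + M_t^{N,i,G_i},
\]
whose martingale part has quadratic variation controlled by the bounded jump rates. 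For the bulk exchange contribution, discrete summation by parts turns $DN^2\mathcal{L}_N\langle\pi^{N,i},G_i\rangle$ into $D\langle\pi^{N,i},\Delta G_i\rangle$ plus a boundary correction of order $N^{1-d}\sum_{x\in\Gamma_N^{\pm}}\e_i(x)\partial_{e_1}G_i(x/N)$; the contact part yields the reaction functional $\langle F_i(\cdot),G_i\rangle$; the boundary generator produces terms of order $N^{1-\ttl}$, resp. $N^{1-\ttr}$, which either combine with the Laplacian's boundary correction or vanish, depending on whether we are in the Dirichlet, Robin or Neumann regime. Uniform boundedness of these quantities then gives Aldous' criterion.

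Second, I would characterize the limit points. Fix a converging subsequence $Q_{N_k}^{\wht}\to Q^{\wht,*}$; passing to the limit in the martingale decomposition requires the replacement lemma to substitute local cylindrical functions such as $\xi(x)(1-\omega(x))n_1(x,\e)$ by the corresponding product of density fields $\rho_i(s,x/N)$, so that the nonlinear term $\int_0^t\langle\widehat{F}(\whr_s),\whg_s\rangle ds$ in $I_{\whg}(\whr)$ emerges correctly. The replacement lemma must also be carried out at the boundary in order to identify the trace $\tr(\whr)_{|\Gamma^{\pm}}$: in the Dirichlet regime $\ttl<1$ the prefactor $N^{2-\ttl}\to\infty$ enforces equilibrium with the reservoir and produces the first boundary integral in \eqref{weakD+R}; in the Robin regime $\ttr=1$ the prefactor is exactly $N$, matching the bulk boundary correction to produce the Robin coupling in the last line of \eqref{weakD+R} and \eqref{WeakN+R}; in the Neumann regime $\ttl>1$ the contribution vanishes. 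A Yau-type energy estimate, obtained via the Dirichlet form bound and relative entropy with respect to $\nualph$, shows that any limit point is concentrated on $L^2([0,T];\mathcal{H}^1(B))$, which legitimises the use of the trace operator from \cite{evans_partial_2010}.

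Third, I would prove uniqueness of weak solutions to each boundary value problem in Definition~\ref{Weakdyn}. Given two solutions $\whr^1,\whr^2$ with the same initial datum, their difference $\widehat{w}:=\whr^1-\whr^2$ satisfies a coupled linear parabolic system with homogeneous boundary data and Lipschitz source (since $\widehat{F}$ is smooth on $[0,1]^3$). Testing against a suitable $\mathcal{C}_{\wht}$-approximation of $\widehat{w}$, using integration by parts in $\mathcal{H}^1(B)$ through the trace operator, and exploiting the favourable sign of the Robin boundary contribution $-\int_{\Gamma^+}|\widehat{w}|^2\,dS$ yields the Grönwall inequality
\[
\frac{d}{dt}\|\widehat{w}_t\|_2^2 \leq C\|\widehat{w}_t\|_2^2,
\]
and hence $\widehat{w}\equiv 0$. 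Uniqueness together with the assumed weak convergence of the initial empirical measure to $\widehat{\gamma}$ forces every subsequential limit to equal the same deterministic trajectory, so the whole sequence $(Q_N^{\wht})$ converges to $Q^{\wht}$; the final pointwise-in-$t$ statement follows from continuity of $t\mapsto\whr_t$ in $L^2(B)$ combined with Skorohod convergence.

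The main obstacles I anticipate are two. The replacement lemma in dimension $d\geq 2$ requires the one- and two-block estimates to be performed in a cylindrical geometry and patched together with three separate boundary arguments for the three slow-reservoir regimes; this is delicate because the same weak formulation must emerge in each case. Uniqueness is the second serious difficulty: the coupling among the three components together with non-homogeneous mixed boundary data obstructs the direct use of classical a priori estimates, and one must rely on the analytical machinery of the trace operator and a careful choice of admissible test functions in $\mathcal{C}_{\wht}$.
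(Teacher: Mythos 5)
Your proposal follows essentially the same route as the paper: the entropy method with tightness via the Dynkin martingale decomposition, bulk and boundary replacement lemmas split according to the three reservoir regimes, an energy estimate placing the limit trajectories in $L^2([0,T];\mathcal{H}^1(B))$ so that the trace operator identifies the boundary values, and uniqueness by a Gr\"onwall inequality. The only notable difference is a matter of implementation in the uniqueness step: where you test against a $\mathcal{C}_{\wht}$-approximation of the difference of two solutions, the paper makes this rigorous by expanding the difference on an explicit orthonormal eigenbasis of the Laplacian adapted to each mixed boundary regime and controlling the Robin boundary term with the trace inequality rather than by its sign.
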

We prove Theorem \ref{HL} in Section 3.\medskip

Intuitively, the sterile insect technique is more effective when $r$ is large and $\lambda_2$ is small. This result has been made precise at the microscopic level for the generalized contact process, in \cite[Theorem 2.5]{kuoch:hal-01100145}, where the author proved a phase transition result: when $\lambda_1>\lambda_2$ are properly tuned, there is a critical value $r_c>0$ below which the wild population survives with strictly positive probability and above which the wild population dies out almost surely.

To establish the hydrostatic limit, depending on the parameters at the boundary, we will need one of the following sets of conditions to be satisfied:
\begin{equation*}
  (H_1): \; \left\{
    \begin{array}{ll}
        D\geq 1\\ 
       r+1>\ddd\\
       1> 2d\lambda_2
    \end{array}
\right.
\end{equation*}
\begin{equation*}
   (H_2): \; \left\{
    \begin{array}{ll}
      D\delta_1+ r+2>\ddd\\
       D\delta_1+1> 2d\lambda_2
    \end{array}
\right.
\end{equation*}
\begin{equation*}
  (H_3): \; \left\{
    \begin{array}{ll}
     r+2>\ddd\\
      1> 2d\lambda_2,
    \end{array}
\right.
\end{equation*}
where $\delta_1$ is the smallest eigenvalue of the Laplacian with Dirichlet boundary conditions (see \eqref{eigenprobDirich}). These conditions will appear technically in the proof of Theorem \ref{T hydrostat}. However, note that at fixed $D$ and $\lambda_1$, this corresponds to having $r$ large and $\lambda_2$ small enough, which is consistent with the effectiveness of the sterile insect technique.\medskip

Recall that $\mu_{N}^{ss}(\widehat{\theta})$ denotes the sequence of unique invariant measures for the irreducible dynamics defined by \eqref{Generator}. The hydrostatic result states as follows.
\begin{thm} [Hydrostatic limit] \label{T hydrostat} Suppose that conditions $(H_1)$ hold. There exists a unique stationary solution of \eqref{D+R} that we denote by $\overline{\rho}^{D,R}$, and a unique stationary solution of \eqref{N+R} that we denote by $\overline{\rho}^{N_e,R}$. Furthermore, the following statements hold.
\begin{itemize}
    \item [•]Consider $\widehat{\theta}=(\theta_{\ell},\theta_r)$ with $\theta_{\ell}\in [0,1)$ and $\theta_r=1$. For any continuous function $\whg:B \rightarrow [0,1]^3$,
\begin{equation}\label{Hydrostat D+R}
    \underset{N \rightarrow \infty}{\lim}~ \mathbb{E}_{\mu_{N}^{ss}(\widehat{\theta})}\Big(~ \Big|\sum_{i=1}^3\frac{1}{N^d}\sum_{x\in B_{N}}\e_{i}(x)G_{i}(x/N) - \sum_{i=1}^3\int_{\overline{B}}G_{i}(u)\overline{\rho}^{D,R}_{i}(u)du \Big|~   \Big) = 0. 
\end{equation}
In other words, the sequence $(\mu_{N}^{ss}(\widehat{\theta}))_{N\geq 1}$ is associated to the unique stationary profile $\overline{\rho}^{D,R}$.
\item[•]Consider $\widehat{\theta}=(\theta_{\ell},\theta_r)$ with $\theta_{\ell}>1$ and $\theta_r=1$. For any continuous function $\whg:B \rightarrow [0,1]^3$,
\begin{equation}\label{Hydrostat}
    \underset{N \rightarrow \infty}{\lim}~ \mathbb{E}_{\mu_{N}^{ss}(\widehat{\theta})}\Big(~ \Big|\sum_{i=1}^3\frac{1}{N^d}\sum_{x\in B_{N}}\e_{i}(x)G_{i}(x/N) - \sum_{i=1}^3\int_{\overline{B}}G_{i}(u)\overline{\rho}^{N,R}_{i}(u)du \Big|~   \Big) = 0. 
\end{equation}
In other words, the sequence $(\mu_{N}^{ss}(\widehat{\theta}))_{N\geq 1}$ is associated to the unique stationary profile $\overline{\rho}^{N_e,R}$.
\end{itemize}

\end{thm}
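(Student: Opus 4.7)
My plan proceeds in three stages: (i) establish uniqueness of the stationary solutions $\overline{\rho}^{D,R}$ and $\overline{\rho}^{N_e,R}$ of \eqref{D+R} and \eqref{N+R}; (ii) show these profiles are global attractors for the corresponding evolution problems in $L^{2}(B)$; and (iii) conclude via the attractor-based hydrostatic scheme of \cite{farfan_hydrostatics_2011}, combining Theorem \ref{HL} with the invariance of $\mu_{N}^{ss}(\wht)$.

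For step (i), the crucial move is the change of variables hinted at in the introduction: set $q := \rho_{2}+\rho_{3}$, the macroscopic density of sites carrying a sterile insect. A direct manipulation of \eqref{F} yields
\begin{equation*}
    F_{2}(\whr) + F_{3}(\whr) = r(1-q) - q = r - (r+1)q,
\end{equation*}
so $q$ satisfies the scalar \emph{linear} elliptic equation $D\Delta q = (r+1)q - r$ with the inherited Dirichlet or Neumann condition on $\Gamma^{-}$ and Robin condition $\partial_{e_{1}}q = D^{-1}((b_{2}+b_{3})-q)$ on $\Gamma^{+}$; standard elliptic theory provides its existence and uniqueness. With $q$ thus determined, I would test the stationary weak formulation \eqref{stat D+R}, resp. \eqref{stat N+R}, for two solutions $\overline{\rho}$ and $\widetilde{\rho}$ sharing this common $q$ against the differences $\overline{\rho}_{i}-\widetilde{\rho}_{i}$, integrate by parts using the trace identities, and use $0\leq \rho_{i}\leq 1$ to bound the bilinear reaction differences. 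The three inequalities of $(H_{1})$ are tuned precisely so that the resulting quadratic form is negative definite: $D\geq 1$ lets the Dirichlet form dominate, $r+1>\ddd$ absorbs the coupling proportional to $\rho_{3}(\lambda_{1}-\lambda_{2})$, and $1>2d\lambda_{2}$ controls the $\lambda_{2}\rho_{3}\rho_{2}$ contribution.

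Step (ii) is the parabolic version of the same computation applied to $w(t,\cdot):=\whr(t,\cdot)-\overline{\rho}$. The combination $w_{2}+w_{3}$ solves a linear parabolic problem with dissipation rate $r+1$ and vanishing boundary data, so $\|w_{2}+w_{3}\|_{L^{2}(B)}$ decays exponentially; an analogous energy estimate on the remaining components under $(H_{1})$ yields $\|w(t,\cdot)\|_{L^{2}(B)}\to 0$ as $t\to\infty$. For step (iii), let $Q_{N}^{\wht,ss}$ denote the law of $(\widehat{\pi}^{N}_{t})_{t\in[0,T]}$ under $\mathbb{P}_{\mu_{N}^{ss}(\wht)}$. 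By Theorem \ref{HL} the sequence is tight and every subsequential limit is concentrated on weak solutions of the appropriate boundary-value problem in $L^{2}([0,T];(\mathcal{H}^{1}(B))^{3})$. Invariance of $\mu_{N}^{ss}(\wht)$ forces the one-time marginals of $Q_{N}^{\wht,ss}$, and thus of any limit point $Q^{*}$, to be independent of $t$; combined with the global attractor property of step (ii), the only trajectory in the support of $Q^{*}$ is the constant path $\overline{\rho}$. Hence $\widehat{\pi}^{N}_{0}$ converges in probability to $\overline{\rho}$ under $\mu_{N}^{ss}(\wht)$, and dominated convergence (the integrands are bounded by $\|\widehat{G}\|_{\infty}$) promotes this to the $L^{1}$-type limits \eqref{Hydrostat D+R} and \eqref{Hydrostat}.

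The main analytical obstacle lies in the Neumann+Robin case of steps (i)--(ii). In the Dirichlet+Robin setting the vanishing boundary values on $\Gamma^{-}$ provide a full Poincaré inequality for the differences $\overline{\rho}_{i}-\widetilde{\rho}_{i}$, and the conditions of $(H_{1})$ comfortably close the energy identity. Under Neumann+Robin no such inequality is available, and all boundary dissipation must come from the Robin trace on $\Gamma^{+}$ alone; the refined conditions $(H_{2})$ and $(H_{3})$, involving the eigenvalue $\delta_{1}$, encode this trade-off. Condition $(H_{1})$ is strong enough that the bulk damping suffices uniformly in both boundary regimes, which is why it is the single hypothesis stated. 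Verifying that the same energy identities close across both regimes, and that the attractor argument survives the passage $t\to\infty$ at the level of the weak formulation (relying on the time-regularity implicit in $\whr\in L^{2}([0,T];(\mathcal{H}^{1}(B))^{3})$ together with \eqref{weakD+R}--\eqref{WeakN+R}), is the technical heart of the proof.
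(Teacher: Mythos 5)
Your stage (iii) is exactly the paper's argument (stationarity of $\mu_N^{ss}(\wht)$ plus Theorem \ref{HL} plus the attractor property, following \cite{farfan_hydrostatics_2011}), and your observation that $q=\rho_2+\rho_3$ decouples is correct and is precisely the paper's third coordinate $R=1-q$ in \eqref{Changecoord}, whose equation $\partial_t R=D\Delta R+J(R)$ is linear. The problems are in stages (i) and (ii). First, you never establish \emph{existence} of a stationary solution of the nonlinear coupled system: your step (i) only compares two putative solutions, and your step (ii) presupposes $\overline{\rho}$. In the paper existence is obtained as the common monotone limit of the two extremal solutions started from $(0,0,0)$ and $(1,1,1)$, which requires the comparison principle (Lemma \ref{L Att} and Corollary \ref{C1}) that your proposal omits entirely.

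Second, and more seriously, the claim that under $(H_1)$ the energy identity is ``negative definite'' and that a Gronwall-type estimate yields $\|w(t,\cdot)\|_{L^2}\to 0$ does not hold as stated. The off-diagonal reaction couplings carry constants of order $2d\lambda_1$ and $1+2d\lambda_2$, whereas $(H_1)$ only controls $\ddd$ by $r+1$; for instance $d=1$, $\lambda_1=11$, $\lambda_2=0.45$, $r=20.2$, $D=1$ satisfies $(H_1)$ while $2d\lambda_1=22>r+1$, so a naive absorption of the cross terms produces a positive, not negative, coefficient. The paper's proof of Lemma \ref{L2} avoids this in two ways you would need to reproduce: the cross terms are bounded using the uniform boundedness of the profiles so that they enter only sublinearly in $\int_0^T B(t)\,dt$, which yields merely $\int_0^\infty\big(\|\rho^1_{1,t}-\rho^0_{1,t}\|_2^2+\dots\big)\,dt<\infty$ rather than exponential decay; and this time-integrability is upgraded to convergence only because Corollary \ref{C1} makes $t\mapsto\whr^1(t,\cdot)-\whr^0(t,\cdot)$ pointwise monotone. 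Without the comparison principle (proved in the paper via the positive-part test functions of \cite{Roubi} in the coordinates $(\rho_1,\rho_1+\rho_3,1-\rho_2-\rho_3)$, where the signs of the coupling coefficients cooperate), neither the existence step nor the passage from integrability to convergence goes through, so your outline is missing the central mechanism of the proof.
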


\begin{rem}
For all the other mixed boundary regimes corresponding to other values of $\widehat{\theta}$, the hydrostatic principle states in the same way, replacing $\overline{\rho}_i^{D,R}$ or $\overline{\rho}_i^{N_e,R}$ by the stationary solution of the associated hydrodynamic equation. In the cases where only Dirichlet and Robin boundary conditions are involved, we can slightly weaken the conditions $(H_1)$ by using conditions $(H_2)$ or $(H_3)$ instead. Precisely: in the (D;D), (D;R), (R;D) regimes, the hydrostatic principle holds under conditions $(H_2)$ and in the ($N_e$ ; $N_e$) regime, it holds under conditions $(H_3)$.
\end{rem}

The proof of Theorem \ref{T hydrostat} is done in Section 4. It essentially relies on an intermediate result stated in Theorem \ref{T3} regarding the convergence of solutions of the hydrodynamic equation towards the unique stationary state. This result is non standard as it involves a system of coupled equations and we prove it in the second Subsection of Section 4.

\section{Proof of the hydrodynamic limit}
As said before, we focus on the cases where $\ttl\in [0,1), \ttr=1$ and $\ttl>1, \ttr=1$.
We follow the entropy method introduced by Guo, Papanicolaou and Varadhan in \cite{cmp/1104161907} to prove the hydrodynamic limit. First, we prove tightness of the sequence of measures $(Q_{N}^{\wht})_{N\geq 1}$. Then, we show that any limit point of $(Q_{N}^{\wht})_{N\geq 1}$ is a Dirac mass concentrated on a weak solution of \eqref{D+R} if $\ttl\in [0,1), \ttr=1$, or \eqref{N+R} if $\ttl>1, \ttr=1$. Finally, we prove uniqueness of the solution of the hydrodynamic equations at fixed initial data. We do not give details for the standard steps but rather, insist on the specific difficulties arising in our case, namely, the $d$-dimensional replacement lemmas (subsections 3.2.3 and 3.2.4) and the uniqueness of the solution of the hydrodynamic equation (sections 3.5).
\subsection{The martingale property and tightness}
Recall from \eqref{Generator} the definition of the total generator $L_N$.
By Dynkin's formula (see \cite[Appendix A.1]{kipnis_scaling_1999}), for $1\leq i \leq 3$, $t\in [0,T]$ and $\whg \in \mathcal{C}_{c}^{1,2}([0,T]\times \overline{B})$,
\begin{equation}\label{Martingale}
    \begin{split}
    M_{i,t}^N(\whg) &:= \langle\pi_{i,t}^N,G_{i,t}\rangle - \langle\pi_{i,0}^N,G_{i,0}\rangle-\int_{0}^t\langle\pi_{i,s}^N,\partial_{s}G_{i,s}\rangle ds\\
    &- N^2\int_{0}^t\mathcal{L}_{N}\langle\pi_{i,s}^N,G_{i,s}\rangle ds - \int_{0}^t \mathbb{L}_{N}\langle\pi_{i,s}^N,G_{i,s}\rangle ds - N^2\int_{0}^t L_{\widehat{b},\wht,N}\langle\pi_{i,s}^N,G_{i,s}\rangle ds
    \end{split}
\end{equation}
is a martingale with respect to the natural filtration $\mathcal{F}_{t}=\sigma(\e_{s},~ s\leq t)$
and with quadratic variation given by:
\begin{equation} \label{Varquadr}
 \int_{0}^t {L}_{N}\left(\langle\pi_{i,s}^N,G_{i,s}\rangle ^2\right)ds  -2N^2\int_{0}^t \langle\pi_{i,s}^N,G_{i,s}\rangle 
 L_N \left( \langle \pi_{i,s}^N,G_{i,s}\rangle \right)ds.
\end{equation}

We then have that
\[\widehat{M}_{t}^N(\whg):=\sum_{i=1}^3 M_{i,t}^N(\whg)\]
is also a martingale whose quadratic variation is known. In order to develop the integral terms in \eqref{Martingale}, introduce the discrete second derivative in the direction $e_k$ (for $1\leq k \leq d$) in the bulk, the discrete Laplacian, and the discrete gradient in the direction $e_1$ at the boundary: for $x\in B_{N}\setminus \Gamma_{N}$,
\[(\partial_{e_{k}}^N)^2G(x/N) :=\sum_{k=1}^d N^2\Big(G\Big(\frac{x+e_{k}}{N}\Big)+G\Big(\frac{x-e_{k}}{N} \Big) - 2G\Big( \frac{x}{N}\Big) \Big),\]

\[\Delta_{N}G(x/N) := \sum_{k=1}^d (\partial_{e_{k}}^N)^2G(x/N),~ ~ (\partial_{e_{1}}^N)^{-}H(x/N) := N\Big(H \Big(\frac{x}{N}\Big) - H\Big(\frac{x-e_{1}}{N}\Big)\Big)  \]
and
\[(\partial_{e_{1}}^N)^{+}H(x/N) := N\Big(H \Big(\frac{x+e_{1}}{N}\Big) - H\Big(\frac{x}{N}\Big)\Big).\]
Computations yield
\begin{equation}\label{Integral2}
\begin{split}
    &M_{i,t}^N(\widehat{G}) = \langle\pi_{i,t}^N,G_{i,t}\rangle - \langle\pi_{i,0}^N,G_{i,0}\rangle-\int_{0}^t \langle\pi_{i,s}^N,\partial_{s}G_{i,s}\rangle ds\\
    &- \int_{0}^t\frac{D}{N^d}\sum_{x\in B_{N}\setminus \Gamma_{N}}\Delta_{N}G_{i,s}(x/N)\e_{i,s}(x)ds - \int_{0}^t\frac{D}{N^d}\sum_{x\in \Gamma_{N}}\sum_{k=2}^d (\partial_{e_{k}}^N)^2G_{i,s}(x/N)\e_{i,s}(x)ds\\
    &- \int_{0}^t\Big[\frac{D}{N^{d-1}}\sum_{x\in \Gamma_{N}^+}(\partial_{e_{1}}^N)^{-}G_{i,s}(x/N)\e_{i,s}(x) - \frac{D}{N^{d-1}}\sum_{x\in \Gamma_{N}^{-}}(\partial_{e_{1}}^N)^{+}G_{i,s}(x/N)\e_{i,s}(x) \Big]ds\\
    &- \int_{0}^t\frac{1}{N^d}\sum_{x\in B_{N}}G_{i,s}(x/N)\tau_{x}f_{i}(\e_s)ds\\
    & + \frac{N^2}{N^{d+\ttl}}\int_{0}^t\sum_{x\in \Gamma_{N}^-}G_{i,s}(x/N)\Big(\e_{i,s}(x)-b_{i}(x/N)\Big)ds\\
    &+ \frac{N^2}{N^{d+\ttr}}\int_{0}^t\sum_{x\in \Gamma_{N}^+}G_{i,s}(x/N)\Big(\e_{i,s}(x)-b_{i}(x/N)\Big)ds,
\end{split}
\end{equation}
where we used that 
\begin{equation}\label{IntegralCPRS}
    \mathbb{L}_{N}\langle\pi_{i,s}^N,G_{i,s}\rangle = \frac{1}{N^d}\sum_{x\in B_{N}}G_{i,s}(x/N)\tau_{x}f_{i}(\e_s),
\end{equation}
with
\begin{equation*}
    \begin{array}{ll}
         &\mathbb{L}_N \e_{1}(0) =  \beta_{B_{N}}(0,\e)\e_{0}(0) + \e_{3}(0)-(r+1)\e_{1}(0)=:f_{1}(\e),  \\
         & \mathbb{L}_N \e_{2}(0) = r\e_{0}(0)+\e_{3}(0)-\beta_{B_{N}}(0,\e)\e_{2}(0) -\e_{2}(0)=:f_{2}(\e),\\
         &\mathbb{L}_N \e_{3}(0)= \beta_{B_{N}}(0,\e)\e_{2}(0)+r\e_{1}(0)-2\e_{3}(0)=:f_{3}(\e).
    \end{array}
\end{equation*}
The second and third lines in \eqref{Integral2} correspond to the computation of the time integral associated to $ N^2\mathcal{L}_N$, the fourth line in \eqref{Integral2} corresponds to the time integral associated to $ \mathbb{L}_N$ and the last term, to the integral associated to $N^2L_{\widehat{b},\widehat{\theta},N}$.

For $i\in \{1,2,3\}$, a computation of the quadratic variation of the martingale $M_{i,t}^N (\widehat{G})$ shows that its expectation vanishes as $N\uparrow \infty$. Therefore, by Doob's inequality, for every $\delta >0$,
\begin{equation}\label{proba1}
    \limsup_{N \rightarrow \infty}{\mathbb P}_{\mu^N} \Big[~\underset{0\leq t\leq T}{\sup}~ \Big| M_{i,t}^N(\widehat{G})\Big|>\delta \Big]=0.
\end{equation}

\begin{prop}\label{tight}
The sequence of measures $(Q_{N}^{\wht})_{N\geq1}$ is tight in $D([0,T],{\mathcal{M}^3})$.
\end{prop}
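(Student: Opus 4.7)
The strategy is the standard Aldous--Rebolledo criterion applied coordinate by coordinate. Since $\mathcal{M}^3$ is compact in the weak topology (each coordinate has total mass at most $3$), the tightness of $(Q_{N}^{\wht})_{N\geq 1}$ in $D([0,T],\mathcal{M}^3)$ reduces, by the standard Kipnis--Landim criterion \cite{kipnis_scaling_1999}, to the tightness of each real-valued process $\bigl(\langle\pi_{i,t}^N,G\rangle\bigr)_{t\in[0,T]}$ in $D([0,T],\R)$, for $i\in\{1,2,3\}$ and for $G$ ranging over a dense class of test functions. Since the contribution of $\Gamma_N$ to $\pi_{i,t}^N$ has mass at most $|\Gamma_N|/N^d=O(1/N)$ and thus vanishes in the limit, the class $\mathcal{C}_{k}^{\infty}(B)$ will be sufficient.

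Fix such a $G$. The compact containment is immediate from $|\langle\pi_{i,t}^N,G\rangle|\leq \|G\|_\infty |B|$, so only the Aldous modulus of continuity condition remains, namely for every $\varepsilon>0$,
\begin{equation*}
\lim_{\gamma\to 0}\;\limsup_{N\to\infty}\;\sup_{\tau,\,\gamma'\leq\gamma}\;\p_{\mu_N}\!\bigl(|\langle\pi_{i,\tau+\gamma'}^N-\pi_{i,\tau}^N,G\rangle|>\varepsilon\bigr)=0,
\end{equation*}
where the supremum runs over stopping times $\tau\leq T$ and $\gamma'\in[0,\gamma]$. Using \eqref{Martingale}--\eqref{Integral2}, the increment splits into a martingale difference and an integrated drift $\int_\tau^{\tau+\gamma'}\Phi_s^N\,ds$. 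The martingale part is taken care of by \eqref{Varquadr} and \eqref{proba1}: a direct computation of the quadratic variation yields $\E\bigl[|M_{i,T}^N(G)|^2\bigr]\to 0$ as $N\to\infty$ at a rate depending only on $G$, hence by Doob's inequality $\p_{\mu_N}\bigl(|M_{i,\tau+\gamma'}^N(G)-M_{i,\tau}^N(G)|>\varepsilon/2\bigr)$ tends to $0$ uniformly in $\tau$ and $\gamma'$.

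For the drift $\int_\tau^{\tau+\gamma'}\Phi_s^N\,ds$ extracted from \eqref{Integral2}, the key observation is that once $G$ has support strictly inside $B$, then for all $N$ large enough every term localized on $\Gamma_N$ vanishes identically: both the boundary discrete-gradient terms on the third line of \eqref{Integral2}, and the reservoir contributions $\tfrac{N^{2}}{N^{d+\ttl}}\sum_{x\in\Gamma_N^-}G(x/N)(\e_{i,s}(x)-b_i(x/N))$ together with their right-hand analogue, drop out. The remaining bulk terms --- the discrete Laplacian and the reaction term \eqref{IntegralCPRS} --- are uniformly bounded by some $C(G,D,\lambda_1,\lambda_2,r)$ thanks to $|\e_{i,s}|\leq 1$ and standard Taylor bounds on $\Delta_N G$. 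Therefore $\bigl|\int_\tau^{\tau+\gamma'}\Phi_s^N\,ds\bigr|\leq C(G)\,\gamma'$ deterministically, and Markov's inequality closes the criterion.

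The obstacle that this choice of test class sidesteps is precisely the boundary reservoir term when $\ttl$ or $\ttr$ is less than $1$: the naive bound $\bigl|\sum_{x\in\Gamma_N^\pm}G(x/N)(\e_{i,s}(x)-b_i(x/N))\bigr|\leq 2\|G\|_\infty N^{d-1}$ combined with the prefactor $N^{2-\theta}$ gives a diverging estimate of order $N^{1-\theta}$, so no deterministic uniform bound is available for a generic test function. Testing against $\mathcal{C}_{k}^{\infty}(B)$ is compatible with the density argument because the boundary mass of $\pi_{i,t}^N$ is negligible in the limit, and it reduces the tightness question to purely bulk estimates.
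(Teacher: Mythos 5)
Your proof is correct and follows essentially the same route as the paper: both reduce tightness to test functions vanishing at (or supported away from) the boundary via an $L^1$/$L^2$-density argument exploiting the uniform boundedness of the empirical density, which eliminates the problematic reservoir terms of order $N^{1-\theta}$, and then control the martingale through its vanishing quadratic variation and the drift through a deterministic uniform bound. The only cosmetic differences are your use of the Aldous stopping-time formulation in place of the paper's expected modulus of continuity, and your choice of $\mathcal{C}_{k}^{\infty}(B)$ instead of $\mathcal{C}^2_{0}(\overline{B})$.
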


We refer to \cite[Chapter 4]{kipnis_scaling_1999} for details regarding the proof of tightness of a sequence of probability measures. It is enough to show that for every $H$ in a dense subset of $\mathcal{C}(\overline{B})$ for the $L^2$ norm, for every $1\leq i\leq 3$,
\begin{equation} \label{pi}
    \underset{\delta \rightarrow 0}{\limsup}~ \underset{N \rightarrow \infty}{\limsup}~  \mathbb{E}_{\mu_{N}}\Big[~ \underset{|t-s|\leq \delta}{\sup}~ \Big|\langle\pi_{i,t},H\rangle - \langle\pi_{i,s},H \rangle\Big|~  \Big]=0.
\end{equation}
By density of $\mathcal{C}^2_{0}(\overline{B})$ in $\mathcal{C}^2(\overline{B})$ for the $L^1$ norm, it is enough to show \eqref{pi} with $H \in \mathcal{C}^2_{0}(\overline{B})$, so that $H$ vanishes at the boundary. To prove that, we use the martingale and its quadratic variation introduced in \eqref{Martingale} and \eqref{Varquadr}, and show that
\begin{equation}\label{A}
    \underset{\delta \rightarrow 0}{\limsup}~ \underset{N \rightarrow \infty}{\limsup}~  \mathbb{E}_{\mu_{N}}\Big[~\underset{|t-s|\leq \delta}{\sup}~ \Big|M_{i,t}^N(H)- M_{i,s}^N(H)\Big|~ \Big] = 0,
\end{equation}
and
\begin{equation}\label{B}
    \underset{\delta \rightarrow 0}{\limsup}~ \underset{N \rightarrow \infty}{\limsup}~  \mathbb{E}_{\mu_{N}}\Big[~\underset{|t-s|\leq \delta}{\sup}~ \Big|\int_{s}^t L_{N}\langle\pi_{i,r}^N,H\rangle dr\Big|~\Big] = 0.
\end{equation} We get \eqref{A} using the triangular inequality and \eqref{proba1}. To prove \eqref{B}, we show that there is a constant $C$ depending only on $H$ such that for every $r\in [0,T]$,
\begin{equation}\label{LN}
   \Big|L_{N}\langle\pi_{i,r}^N,H\rangle \Big|\leq C. 
\end{equation} For that,  we use the decomposition of $L_N$ and the fact that $H$ vanishes at the boundary as well as explicit computations and the fact that the $f_i$'s are uniformly bounded in $N$.

\subsection{Replacement Lemmas }
In order to characterize the limit points of a sequence $(Q_N^{\wht})_{N\geq 1}$, we need to close the equation \eqref{Integral2}. That means that we want to show that each term of the martingale converges to a term that appears in the weak formulation of the solution of the hydrodynamic equation, and that the martingale converges to zero. For that, we perform a replacement lemma in the bulk and one at the boundary. The replacement lemma in the bulk (Proposition \ref{Bulk}) is exactly the same as in \cite[Lemma 4.2]{KMS} and we refer to that article for a detailed proof. Here we focus on the replacement lemmas at the boundary and more specifically on the left-hand side boundary (the same statements hold on the right-hand side). There are two replacement lemmas: one for $\ttl\in [0,1)$ whose formulation coincides with the replacement lemma at the boundary in \cite[Proposition 4.3]{KMS} (corresponding to a Dirichlet condition), and one for $\ttr \geq 1$, whose formulation involves particle densities over small macroscopic boxes. 
\subsubsection{Dirichlet forms}
Let us recall the expressions introduced in \cite[Section 5]{KMS} of the Dirichlet forms associated to each dynamics. For that, recall the correspondences \eqref{corresp} and \eqref{fonctiondexiw}. For $f:\whsN\rightarrow \R$ and $\mu$ a measure on $\whsN$,
\begin{equation*}
    \mathcal{D}_{N}(f,\mu)=\sum_{k=1}^d\sum_{\xek\in B_{N}^2}\int_{\whsN}D\Big(\sqrt{f(\xiwk)}-\sqrt{f\xiw} \Big)^2d\mu\xiw,
\end{equation*}
\begin{equation*}
\begin{split} 
    D_{\widehat{b},\wht,N}(f,\mu) &:=\frac{1}{N^{\ttl}}\sum_{i=0}^3\sum_{x\in \Gamma_{N}^-}\int_{\whsN}b_{i}(x/N)(1-\e_i(x))\Big(\sqrt{f(\sigma_{i,x}\xiw)}-\sqrt{f\xiw} \Big)^2d\mu\xiw\\
    &+ \frac{1}{N^{\ttr}}\sum_{i=0}^3\sum_{x\in \Gamma_{N}^+}\int_{\whsN}b_{i}(x/N)(1-\e_i(x))\Big(\sqrt{f(\sigma_{i,x}\xiw)}-\sqrt{f\xiw} \Big)^2d\mu\xiw,
\end{split}
\end{equation*}
and
\begin{equation*}
    \begin{split}
            \mathbb{D}_{N}(f,\mu) &:= \sum_{x\in B_{N}}\int_{\whsN}\Big[r(1-\omega(x))+\omega(x) \Big]\Big(\sqrt{f(\xi,\sigma^x\omega)} - \sqrt{f\xiw} \Big)^2d\mu\xiw\\
    & +\sum_{x\in B_{N}}\int_{\whsN}\Big[\beta_{B_{N}}(x,\xi,\omega)(1-\xi(x))  +\xi(x) \Big]\Big(\sqrt{f(\sigma^x\xi,\omega)} - \sqrt{f\xiw} \Big)^2d\mu\xiw.
    \end{split}
\end{equation*}
In the proofs of the Replacement lemmas, we will widely make use of the following inequalities. 
\begin{lemma}\label{synthese}
    \begin{itemize}
        \item[(i)] Consider $\widehat{\alpha}$ a smooth profile which satisfies \eqref{a)}. 
        There is a constant $C_1>0$ such that for any density function $f:\whsN\rightarrow \R$ with respect to the measure $\nualph$,
         \begin{equation}\label{Echh}
        \langle\mathcal{L}_{N}\sqrt{f},\sqrt{f}\rangle_{\nualph} \leq -\frac{1}{4}\mathcal{D}_{N}(f,\nualph) + C_1N^{d-2}.
    \end{equation}
    \item[(ii)] Consider $\widehat{\alpha}$ a smooth which satisfies \eqref{a)} including constants. There is a constant $C_2>0$ such that for any density function $f:\whsN\rightarrow \R$ with respect to the measure $\nualph$,
    \begin{equation}\label{IneggenCPRS}
   \langle \mathbb{L}_{N}\sqrt{f},\sqrt{f}\rangle_{\nualph} \leq C_2N^d. 
\end{equation}
\item[(iii)] Consider $\widehat{\alpha}$ a smooth profile which satisfies \eqref{a)} and \eqref{b)}, then for any density function $f:\whsN\rightarrow \R$ with respect to the measure $\nualph$,
    \begin{equation} \label{bord}
        \langle L_{\widehat{b},\wht,N}\sqrt{f},\sqrt{f}\rangle_{\nualph} = -\frac{1}{2}D_{\widehat{b},\wht,N}(f,\nualph). 
    \end{equation}
    \end{itemize}
\end{lemma}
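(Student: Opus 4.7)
The three parts of the lemma rest on the algebraic identity \eqref{useful1} combined with the invariance properties of $\nualph$ under each of the three dynamics, and I would treat them in order of increasing difficulty.

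Part (iii) is an exact equality and I would derive it from the reversibility of $\nualph$ with respect to $L_{\widehat{b},\wht,N}$. Under hypotheses \eqref{a)} and \eqref{b)}, detailed balance at a boundary site $x\in \Gamma_N$ reduces to $\alpha_i(x/N)\,b_j(x/N)=\alpha_j(x/N)\,b_i(x/N)$, which holds trivially because $\widehat{\alpha}_{|_\Gamma}=\widehat{b}$. The classical computation, applying \eqref{useful1} with $A=\sqrt{f\xiw}$, $B=\sqrt{f(\sigma_{i,x}\xiw)}$, and using reversibility to see that the $(B^2-A^2)$-term integrates to zero edge by edge, then yields \eqref{bord} exactly.

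Part (ii) has no reversibility at its disposal, but the rates of $\mathbb{L}_N$ are uniformly bounded by a constant $K=K(\lambda_1,\lambda_2,r,d)$. My plan is to apply \eqref{useful1} with $A=\sqrt{f\xiw}$ and $B=\sqrt{f(\sigma^x\xi,\omega)}$, respectively $B=\sqrt{f(\xi,\sigma^x\omega)}$, for each site $x\in B_N$, which gives
\[
\langle \mathbb{L}_N\sqrt{f},\sqrt{f}\rangle_\nualph \;\le\; \tfrac{1}{2}\sum_{x\in B_N}\int r_x\xiw \bigl(f(\sigma^{*}\xiw)-f\xiw\bigr)\,d\nualph,
\]
after discarding the non-positive Dirichlet-type contribution. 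A change of variable $\xiw\mapsto \sigma^{*}\xiw$ introduces the Radon--Nikodym density between $\nualph$ and its pushforward under the single-site flip, which is bounded uniformly in $N$ thanks to \eqref{a)} and \eqref{Condition1}; since the rates are bounded and $f$ is a density with $\int f d\nualph=1$, each site contributes $O(1)$ and the sum over $|B_N|=O(N^d)$ gives the $C_2 N^d$ bound.

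Part (i) is the delicate one, since $\nualph$ is not reversible for the exchange generator when $\widehat{\alpha}$ is non constant. Applying \eqref{useful1} with $A=\sqrt{f\xiw}$ and $B=\sqrt{f(\xiwk)}$ inside the sum defining $\mathcal{L}_N$ gives
\[
\langle \mathcal{L}_N \sqrt{f},\sqrt{f}\rangle_\nualph \;=\; -\tfrac{1}{2}\mathcal{D}_N(f,\nualph)\;+\;R_N,\qquad R_N=\tfrac{D}{2}\sum_{k,\xek}\int \bigl(f(\xiwk)-f\xiw\bigr)\,d\nualph,
\]
and a change of variable rewrites $R_N=\tfrac{D}{2}\sum\int f\,(\psi^{x,x+e_k}-1)\,d\nualph$, where $\psi^{x,y}=\nualph(\xiwk)/\nualph\xiw$ has an explicit product form depending only on $\widehat{\alpha}(x/N),\widehat{\alpha}(y/N),\xi(x),\omega(x),\xi(y),\omega(y)$. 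Because $\widehat{\alpha}$ is smooth and bounded away from $0$ and $1$, a second-order Taylor expansion yields $\psi^{x,x+e_k}\xiw=1+N^{-1}\Phi_k(x,\xiw)+N^{-2}\Psi_k(x,\xiw)+O(N^{-3})$, the key point being that $\Phi_k$ has a discrete-gradient structure in $x$ of the form $\partial_{e_k}(\log\widehat{\alpha})(x/N)[\chi_x-\chi_{x+e_k}]$. A discrete summation by parts in $x$ transfers this gradient onto the smooth weight, producing an extra $1/N$ factor plus a boundary correction of total size $O(N^{d-1})$; combined with the explicit $1/N$ prefactor, the $\Phi_k$-contribution to $R_N$ is $O(N^{d-2})$, and the $\Psi_k$-contribution is directly $O(N^{d-2})$. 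Finally, I would use Young's inequality \eqref{useful2} with an $N$-independent weight to absorb the residual cross-term $\sum\int (g(\xiwk)-g\xiw)(g(\xiwk)+g\xiw)\,d\nualph$ produced by the change of variable, lowering the coefficient in front of $\mathcal{D}_N$ from $-1/2$ to $-1/4$ and leaving only the announced $C_1 N^{d-2}$ error. The main obstacle throughout the lemma is precisely this Taylor-plus-discrete-IBP bookkeeping in part (i); parts (ii) and (iii) are short once the reversibility structure (or lack thereof) has been identified.
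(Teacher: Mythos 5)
The paper does not actually prove Lemma \ref{synthese}; it refers to \cite{KMS}, Lemma 6.1, noting only that the argument rests on change-of-variable formulas in the spirit of \eqref{chtva1} and \eqref{chtva2}, and your sketch is correct and follows exactly that method. For (iii), the detailed-balance relation $\alpha_j b_i=\alpha_i b_j$ on $\Gamma$, guaranteed by \eqref{b)}, makes the $(B^2-A^2)$ contribution of \eqref{useful1} cancel after pairing the transitions $i\leftrightarrow j$ at each boundary site, giving the exact identity \eqref{bord}; for (ii), the uniformly bounded flip rates together with the uniformly bounded single-site Radon--Nikodym factor (from \eqref{a)}) give $O(1)$ per site and hence $C_2N^d$. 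For (i), your Taylor-plus-discrete-summation-by-parts control of the first-order term in $\psi^{x,x+e_k}-1$ is sound and already yields $R_N=O(N^{d-2})$ while keeping the full coefficient $-\tfrac12$ in front of $\mathcal{D}_N(f,\nualph)$; the closing step in which you invoke Young's inequality to absorb a cross-term and lower the coefficient to $-\tfrac14$ is therefore superfluous, and indeed no such cross-term is produced by the change of variable $\int f(\xiwk)\,d\nualph=\int f\,\psi^{x,x+e_k}\,d\nualph$ in your own decomposition. That last sentence conflates your route with the other standard one (antisymmetrize $\int f\,(\psi^{x,x+e_k}-1)\,d\nualph$ over the bond and then absorb the resulting $\bigl(\sqrt{f(\xiwk)}-\sqrt{f\xiw}\bigr)\bigl(\sqrt{f(\xiwk)}+\sqrt{f\xiw}\bigr)$ factor into the Dirichlet form via \eqref{useful2}, which is where the loss from $-\tfrac12$ to $-\tfrac14$ genuinely originates); either route closes the proof, so this is a redundancy rather than a gap.
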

We refer to \cite{KMS}, Lemma 6.1 for the proof. The authors use some change of variable formulas in the same spirit as those given in \eqref{chtva1} and \eqref{chtva2}. For point $(iii)$, they use an alternative expression of the boundary generator expressed in terms of $\xiw$.   

\subsubsection{Replacement lemma in the bulk. }
Let us first introduce a few notations. Given a smooth profile $\widehat{\alpha}$, and a function $\phi:\whsN\rightarrow \R$, denote by $\overset{\sim}{\phi}(\widehat{\alpha})$ the expectation of $\phi$ under $\nualph$. For $\ell\in \N$, introduce
\begin{equation}\label{box}
    \Lambda_{x}^{\ell}= \{y\in B_N,~ \|y-x\|\leq \ell \},
\end{equation}
where $\|y-x\|=\max\{|y_i-x_i|,~1\leq i \leq d \}$, and denote by
$\e_{i}^{\ell}(x)$ the average of $\e$ in $\Lambda_{x}^{\ell}$, that is,
\begin{equation}\label{average}
    \e_{i}^{\ell}(x) = \frac{1}{|\Lambda_{x}^{\ell}|}\sum_{y\in \Lambda_{x}^{\ell}}\e_{i}(y),~ ~ \text{for}~ 1\leq i \leq 3.
\end{equation}
Introduce the vector
\[\widehat{\e}^{\ell}(x) = (\e^{\ell}_{1}(x),\e^{\ell}_{2}(x),\e^{\ell}_{3}(x))\]
and for $\varepsilon>0$,

\begin{equation*}
    V_{\varepsilon N}\xiw= \Big|\frac{1}{|\Lambda_{0}^{\lfloor \varepsilon N \rfloor}|}\sum_{y\in \Lambda_{0}^{\lfloor \varepsilon N \rfloor}}\tau_{y}\phi\xiw - \overset{\sim}{\phi}(\widehat{\e}^{\lfloor \varepsilon N \rfloor}(0))  \Big|.
\end{equation*}
In the sequel, we will write $\varepsilon N$ instead of $\lfloor \varepsilon N \rfloor$. The replacement lemma in the bulk stated and proved in \cite[Lemma 4.2]{KMS} is the following:
\begin{prop} \label{Bulk} For any $G\in \mathcal{C}_{0}^{1,2}$ and for any function $\phi:\whsN \rightarrow \R$
,
\begin{equation*} 
    \underset{\varepsilon \rightarrow 0}{\limsup} ~\underset{N \rightarrow \infty}{\limsup}~ \mathbb{E}_{\mu_{N}}\Big[\frac{1}{N^d}\sum_{x\in B_N}\intT\big|G_{s}(x/N)\big|\tau_{x}V_{\varepsilon N}(\xi_s,\omega_s) ds  \Big]  =0.
\end{equation*}
\end{prop}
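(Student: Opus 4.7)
The plan is to follow the entropy/Dirichlet-form strategy of Guo--Papanicolaou--Varadhan, adapted to this two-species setting. I fix a smooth reference profile $\widehat{\alpha}$ satisfying both \eqref{a)} and \eqref{b)} at the boundary, so that by Lemma \ref{synthese}(iii) the boundary Dirichlet form satisfies $\langle L_{\widehat{b},\wht,N}\sqrt{f},\sqrt{f}\rangle_{\nualph}\le 0$ for any density $f$ with respect to $\nualph$. Applying the entropy inequality with scaling parameter $\alpha N^d$ together with the Feynman--Kac formula and the variational characterization of the top eigenvalue reduces the expectation in the statement to controlling, for almost every $s\in[0,T]$,
\[
\frac{1}{N^d}\sup_{f}\Big\{\sum_{x\in B_N}|G_s(x/N)|\langle\tau_xV_{\varepsilon N},f\rangle_{\nualph} + \frac{1}{\alpha}\langle L_N\sqrt{f},\sqrt{f}\rangle_{\nualph}\Big\},
\]
the supremum running over probability densities $f$ with respect to $\nualph$. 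Combining parts (i)--(iii) of Lemma \ref{synthese} with the decomposition $L_N=N^2\mathcal{L}_N+N^2L_{\widehat{b},\wht,N}+\mathbb{L}_N$ yields $\langle L_N\sqrt{f},\sqrt{f}\rangle_{\nualph}\le -\tfrac{N^2}{4}\mathcal{D}_N(f,\nualph)+CN^d$. Choosing $\alpha$ of order $N^2$, the residual $O(N^d)$ terms contribute only $o(1)$, and the supremum may be restricted to densities whose exchange Dirichlet form is of order at most $N^d$.

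At this point the problem reduces to the classical pair of \emph{one-block} and \emph{two-block} estimates, now adapted to the three-parameter local density $\widehat{\e}^{\ell}(x)=(\e_1^{\ell}(x),\e_2^{\ell}(x),\e_3^{\ell}(x))$. For the one-block estimate I fix a microscopic scale $\ell\in\mathbb{N}$ and use convexity of the Dirichlet form to replace $f$ on the box $\Lambda_x^{\ell}$ by its conditional average given $\widehat{\e}^{\ell}(x)$. The equivalence of ensembles for the Bernoulli compound product measure $\nualph$ then yields, uniformly over densities with controlled Dirichlet form,
\[
\frac{1}{N^d}\sum_{x\in B_N}\int\Big|\tfrac{1}{|\Lambda_x^\ell|}\sum_{y\in\Lambda_x^\ell}\tau_y\phi - \widetilde{\phi}(\widehat{\e}^{\ell}(x))\Big|\,f\,d\nualph \;\xrightarrow[N\to\infty]{}\;\delta(\ell), \qquad \delta(\ell)\xrightarrow[\ell\to\infty]{}0.
\]
For the two-block estimate I compare the scale $\ell$ with the macroscopic scale $\varepsilon N$: since stirring is a gradient dynamics, the discrete gradient of the empirical density along a microscopic path in $\Lambda_x^{\varepsilon N}$ is controlled by $\mathcal{D}_N(f,\nualph)$, and uniform continuity of $\widetilde{\phi}$ on the simplex $\{(\rho_1,\rho_2,\rho_3)\in[0,1]^3:\rho_1+\rho_2+\rho_3\le 1\}$ closes the comparison.

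The main obstacle specific to this model is the non-reversibility of the bulk generator $\mathbb{L}_N$: there is no explicit family of invariant measures at fixed density, so the whole argument is forced to take place relative to the Bernoulli compound reference $\nualph$, and the contact contribution can only be estimated through the uniform $O(N^d)$ bound \eqref{IneggenCPRS}, which is absorbed into the entropy budget. Consequently the equivalence of ensembles must be applied to the three-parameter Bernoulli compound measure simultaneously, the function $\widetilde{\phi}$ must be understood as the grand-canonical expectation parametrised by a triple in the simplex above, and the required uniform continuity of $\widetilde{\phi}$ replaces the single-parameter monotonicity arguments available in the one-species exclusion case. Once this multi-component ensemble-equivalence step is in place, the usual two-scale argument runs through and produces the vanishing as $N\to\infty$ followed by $\varepsilon\to 0$.
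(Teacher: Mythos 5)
The paper does not actually prove this proposition: it states that the bulk replacement lemma ``is exactly the same as in [KMS, Lemma 4.2]'' and defers entirely to that reference. Your overall strategy --- entropy inequality plus Feynman--Kac, absorption of the generator via Lemma \ref{synthese} into $\langle L_N\sqrt f,\sqrt f\rangle_{\nualph}\le -\tfrac{N^2}{4}\mathcal D_N(f,\nualph)+CN^d$, then one-block and two-block estimates with equivalence of ensembles for the three-parameter Bernoulli compound measure --- is the standard Guo--Papanicolaou--Varadhan route and is what the cited reference carries out, so at the level of architecture you are aligned with the intended proof.

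There is, however, one step that would fail as written: the choice of the entropy/Feynman--Kac parameter. Taking $\alpha$ of order $N^2$ does make the residual $O(N^d)$ terms contribute $o(1)$, but it only restricts the supremum to densities with $\mathcal D_N(f,\nualph)=O(N^d)$. That bound is too weak for the one-block estimate: after convexity and translation averaging, the Dirichlet form of the localized density on a block $\Lambda^\ell_x$ is controlled by $\ell^d N^{-d}\mathcal D_N(f,\nualph)$, which must vanish as $N\to\infty$ for the limit points to be exchangeable on the block (this exchangeability is what licenses the equivalence of ensembles). With $\mathcal D_N(f,\nualph)=O(N^d)$ this quantity is $O(\ell^d)$ and the argument collapses. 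The correct choice is to keep the entropy constant fixed (of order $1$), which yields the restriction $\mathcal D_N(f,\nualph)=O(N^{d-2})$, run the one-block and two-block estimates, and send that constant to infinity only at the very end, after $N\to\infty$, $\varepsilon\to 0$ and $\ell\to\infty$. Separately, the two core estimates are asserted rather than proved: the multi-species equivalence of ensembles and the uniform continuity of $\widetilde\phi$ on the simplex, which you correctly identify as the genuinely new ingredients relative to the one-species case, are precisely the content of the lemma and would need to be written out (the two-block comparison also rests on a moving-particle/path bound costing $(\varepsilon N)^2N^{-2}\mathcal D_N(f,\nualph)$, not on any gradient structure of the stirring dynamics).
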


\subsubsection{Replacement lemma at the left-hand side boundary for $ \ttl\in [0,1)$.}

Here we fix $\ttl$ in $[0,1)$ and prove the replacement lemma at the left-hand side boundary. It essentially states that when performing the macroscopic limit $N\rightarrow \infty$, we can replace $\e_i(x)$ by $b_i(x/N)$.  For $\ttr\in [0,1)$, the replacement lemma at the right-hand side boundary is exactly the same. Recall that this result has been proved for $\ttl=\ttr=0$ in \cite[Section 6]{KMS} and we generalize it here to the case where the left-hand side (or right-hand side) parameter $\ttl$ is allowed to vary in $[0,1)$.
\begin{prop} \label{Repdirich} For any sequence of measures $(\mu_{N})_{N\geq 0}$ on $\whsN$, for any $G\in \mathcal{C}^{1,2}([0,T]\times \overline{B})$ and any $i\in \{1,2,3\}$, for any $t\in [0,T]$, for all $\delta >0$,
\begin{equation}\label{ReplacementDirichlet}
    \underset{N \rightarrow \infty}{\limsup}~ \mathbb{P}_{\mu_{N}}\Big[\Big|\int_{0}^t\frac{1}{N^{d-1}}\sum_{x\in \Gamma_{N}^-}G(s,x/N)\big(\e_{i,s}(x)-b_{i}(x/N) \big)ds \Big| >\delta \Big] = 0.
\end{equation}
\end{prop}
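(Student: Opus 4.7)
The plan is to follow the entropy method of \cite[Proposition 4.3]{KMS} (which handles $\ttl=0$) adapted to general $\ttl\in[0,1)$. By Markov's inequality it suffices to prove the $L^1$ bound
\begin{equation*}
\limsup_{N\to\infty}\E_{\mu_N}\Big[\Big|\intT \frac{1}{N^{d-1}}\sum_{x\in \Gamma_N^-}G(s,x/N)(\e_{i,s}(x)-b_i(x/N))\,ds\Big|\Big]=0.
\end{equation*}
I would pick a smooth reference profile $\widehat{\alpha}$ satisfying \eqref{a)} and \eqref{b)}; then by \eqref{Invbord}, $\nualph$ is reversible for the boundary dynamics and has marginal $\widehat{b}(x/N)$ at each $x\in\Gamma_N^-$, and $H(\mu_N\mid\nualph)\leq C_0 N^d$ since $\widehat{\alpha}$ is bounded away from $0$ and $1$. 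The standard entropy inequality combined with $e^{|X|}\leq e^X+e^{-X}$ and \eqref{Conv} reduces the task, for any $\gamma>0$, to upper-bounding
\begin{equation*}
\frac{1}{\gamma N^d}\log \E_{\nualph}\Big[\exp\Big(\pm\gamma\intT \sum_{x\in\Gamma_N^-} N G(s,x/N)(\e_{i,s}(x)-b_i(x/N))\,ds\Big)\Big].
\end{equation*}

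By the Feynman--Kac formula together with Lemma \ref{synthese}, which yields $\langle L_N\sqrt{f},\sqrt{f}\rangle_{\nualph}\leq -\tfrac{N^2}{2}D_{\widehat{b},\wht,N}(f,\nualph)+C_1N^d$ after absorbing the (nonpositive) exchange and ($O(N^d)$) contact contributions, the exponent above is bounded by $t\Lambda_N^\pm$, where
\begin{equation*}
\Lambda_N^\pm\leq \sup_f\Big\{\pm\gamma N\sum_{x\in\Gamma_N^-}G(s,x/N)\int(\e_i(x)-b_i(x/N))f\,d\nualph-\tfrac{N^2}{2}D_{\widehat{b},\wht,N}(f,\nualph)\Big\}+C_1N^d,
\end{equation*}
the supremum being over densities $f$ with respect to $\nualph$. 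The proof thus reduces to showing, uniformly in $f$,
\begin{equation*}
\gamma N\sum_{x\in\Gamma_N^-}|G(s,x/N)|\cdot\Big|\int(\e_i(x)-b_i(x/N))f\,d\nualph\Big|\leq \tfrac{N^2}{2}D_{\widehat{b},\wht,N}(f,\nualph)+C\gamma^2 N^{d-1+\ttl};
\end{equation*}
indeed dividing by $N^d$ would give $\Lambda_N^\pm/N^d\leq C_2+C\gamma^2 N^{\ttl-1}$, whence the full entropy bound becomes $(C_0+tC_2)/\gamma+tC\gamma N^{\ttl-1}$, which vanishes by first letting $N\to\infty$ (using $\ttl<1$) and then $\gamma\to\infty$.

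The heart of the argument is the site-wise estimate. Because $\widehat{\alpha}=\widehat{b}$ on $\Gamma^-$, the product structure of $\nualph$ yields the identity
\begin{equation*}
\int(\e_i(x)-b_i(x/N))f\,d\nualph=b_i(x/N)\int[f(\sigma_{i,x}\xiw)-f\xiw]\,d\nualph,
\end{equation*}
since conditioning on $\e(x)=i$ has probability $b_i(x/N)$ and freezes $f$ into $f(\sigma_{i,x}\xiw)$. Writing $f(\sigma_{i,x}\xiw)-f\xiw=(\sqrt{f(\sigma_{i,x}\xiw)}-\sqrt{f\xiw})(\sqrt{f(\sigma_{i,x}\xiw)}+\sqrt{f\xiw})$, applying Cauchy--Schwarz, and using the trivial bound $b_i(x/N)\int f(\sigma_{i,x}\cdot)\,d\nualph=\int\e_i(x)f\,d\nualph\leq 1$ for any density $f$, one obtains
\begin{equation*}
\Big|\int(\e_i(x)-b_i(x/N))f\,d\nualph\Big|\leq 2\sqrt{b_i(x/N)\int\big(\sqrt{f(\sigma_{i,x}\xiw)}-\sqrt{f\xiw}\big)^2 d\nualph}\leq 2\sqrt{\Phi_x(f)},
\end{equation*}
where $\Phi_x(f):=\sum_{j=0}^3\int b_j(x/N)(1-\e_j(x))(\sqrt{f(\sigma_{j,x}\xiw)}-\sqrt{f\xiw})^2 d\nualph$. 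Applying \eqref{useful2} with $A=2N^{\ttl-1}$ site by site gives $2\gamma N|G(s,x/N)|\sqrt{\Phi_x(f)}\leq \tfrac{N^{2-\ttl}}{2}\Phi_x(f)+2\gamma^2 N^{\ttl}\|G\|_\infty^2$; summing over $x\in\Gamma_N^-$ and using $\tfrac{N^{2-\ttl}}{2}\sum_{x\in\Gamma_N^-}\Phi_x(f)\leq \tfrac{N^2}{2}D_{\widehat{b},\wht,N}(f,\nualph)$ together with $|\Gamma_N^-|\leq (2N+1)^{d-1}$ closes the required inequality.

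The crux, and the sole reason for the restriction $\ttl<1$, is the quantitative matching of three scales: the $N^{d-1}$ boundary sites, the rate $N^{2-\ttl}$ governing the boundary Dirichlet form, and the entropy scale $N^d$. Young's inequality produces a residual of order $\gamma^2 N^{d-1+\ttl}$ which, divided by $N^d$, vanishes precisely when $\ttl<1$. At the threshold $\ttl=1$ the estimate is tight and one must instead replace $\e_i(x)$ by a mesoscopic average $\e_i^{\varepsilon N}(x)$, which is the content of the complementary replacement lemma used for the Robin and Neumann regimes.
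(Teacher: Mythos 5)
Your proof is correct and follows essentially the same route as the paper's: Feynman--Kac reduction to a variational problem over densities, the change-of-variable identity at boundary sites (your conditioning argument is equivalent to the paper's use of \eqref{chtva2}), and Young's inequality tuned to absorb the boundary contribution into $\tfrac{N^2}{2}D_{\widehat{b},\wht,N}(f,\nualph)$ at the cost of a residual of order $\gamma^2 N^{d-1+\ttl}$, which vanishes after dividing by $N^d$ precisely because $\ttl<1$. The only cosmetic difference is that the paper runs the exponential Chebyshev (superexponential) version of the estimate while you run the relative-entropy $L^1$ version, but both reduce to the identical variational bound.
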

\noindent Note that the replacement lemma at the right-hand side boundary for $\ttr\in [0,1)$ states as above, with the sum in $x$ carrying over $\Gamma_N^+$ rather than $\Gamma_N^-$.
\begin{proof}
    Fix an $i\in \{1,2,3\}$. It is enough to show that
\begin{equation*}
    \underset{N \rightarrow \infty}{\limsup}~\frac{1}{N^d}\log\Big( \mathbb{P}_{\mu_{N}}\Big[\Big|\int_{0}^t\frac{1}{N^{d-1}}\sum_{x\in \Gamma_{N}^-}G(s,x/N)\big(\e_{i,s}(x)-b_{i}(x/N) \big)ds \Big| >\delta \Big]\Big) = -\infty.
\end{equation*}
Consider $\widehat{\alpha}$ a smooth profile satisfying conditions \eqref{a)} and \eqref{b)}. For $a>0$,
\begin{equation*}
    \begin{split}
    &\mathbb{P}_{\mu_{N}}\Big[\Big|\int_{0}^t\frac{1}{N^{d-1}}\sum_{x\in \Gamma_{N}^-}G(s,x/N)\big(\e_{i,s}(x)-b_{i}(x/N) \big)ds \Big| >\delta \Big]\\
    &\leq \sup_{\xiw\in \whsN}\frac{d\mu_{N}}{d\nualph}\xiw\times \mathbb{P}_{\nualph}\Big[\Big|\int_{0}^t\frac{1}{N^{d-1}}\sum_{x\in \Gamma_{N}^-}G(s,x/N)\big(\e_{i,s}(x)-b_{i}(x/N) \big)ds \Big| >\delta \Big]
    \\
    &\leq \exp(K_{0}N^d-a\delta N^d)\mathbb{E}_{\nualph}\Big[\exp\Big(aN^d\Big|\int_{0}^t\frac{1}{N^{d-1}}\sum_{x\in \Gamma_{N}^-}G(s,x/N)\big(\e_{i,s}(x)-b_{i}(x/N) \big)ds \Big| \Big) \Big].
    \end{split}
\end{equation*}
We used, in the first inequality, that the Radon-Nikodym derivative of $\mu_{N}$ with respect to $\nualph$ is bounded by $\exp(K_{0}N^d)$ with $K_{0}$ a constant, and Chebychev's inequality in the second line. Therefore,
\begin{equation}\label{ineglog}
\begin{split}
    &\frac{1}{N^d}\log\Big( \mathbb{P}_{\mu_{N}}\Big[\Big|\int_{0}^t\frac{1}{N^{d-1}}\sum_{x\in \Gamma_{N}^-}G(s,x/N)\big(\e_{i,s}(x)-b_{i}(x/N) \big)ds \Big| >\delta \Big]\Big)\\
    &\leq -a\delta + K_{0}\\
    &+ \frac{1}{N^d}\log\Big\{\mathbb{E}_{\nualph}\Big[\exp\Big\{aN^d\Big|\int_{0}^t\frac{1}{N^{d-1}}\sum_{x\in \Gamma_{N}^-}G(s,x/N)\big(\e_{i,s}(x)-b_{i}(x/N) \big)ds \Big| \Big\} \Big] \Big\}.
\end{split}
\end{equation}
It is enough to show that the $\limsup$ of the last term is uniformly bounded in $a$ and then, take $a\rightarrow \infty$.
Since $e^{|x|} \leq e^{x} + e^{-x}$, using inequality \eqref{Conv}, we show that the last term in \eqref{ineglog}  without the absolute values, is uniformly bounded in $a$ and $N$. Applying Feynman-Kac's inequality (see \cite[Appendix A.1]{kipnis_scaling_1999}) with
\[V(s,(\xi_s,\omega_s)) = \frac{aN^d}{N^{d-1}} \sum_{x\in \Gamma_{N}^-}G(s,x/N)(\e_{i,s}(x)-b_{i}(x/N)),\]
we get that
\begin{equation*}
\begin{split}
    &\frac{1}{N^d}\log\Big(\mathbb{E}_{\nualph}\Big[\exp\Big(aN^d\int_{0}^t\frac{1}{N^{d-1}}\sum_{x\in \Gamma_{N}^-}G(s,x/N)\big(\e_{i,s}(x)-b_{i}(x/N) \big)ds  \Big) \Big] \Big)\\
    &\leq \int_0^t ds ~ \underset{f}{\sup}\Big\{ \int_{\whsN}\frac{a}{N^{d-1}}\sum_{x\in \Gamma_{N}^-}G(s,x/N)
    \big(b_{i}(x/N)-\e_{i}(x)\big)f\xiw d\nualph\xiw\\
    &+ \frac{1}{N^d}\langle L_{N}\sqrt{f},\sqrt{f}\rangle_{\nualph} \Big\},
\end{split}
\end{equation*} 
where the supremum is taken over densities with respect to $\nualph$. Note that for $x\in \Gamma_{N}^-$,
\begin{equation*}
    b_{i}(x/N)-\e_{i}(x) = \sum_{j\neq i}(b_{i}(x/N)\e_{j}(x)-b_{j}(x/N)\e_{i}(x)),
\end{equation*}
and, for $j\neq i$, by the change of variable presented in \eqref{chtva2},

\begin{equation}\label{chtva}
\begin{split}
    \int\e_{i}(x)b_{j}(x/N)f\xiw d\nualph\xiw&= \int\e_j(x) b_i(x/N) f(\sigma_{i,x}\xiw) d\nualph\xiw.
\end{split}
\end{equation}
Therefore, 
\begin{equation} \label{Vt}
    \begin{split}
    &G(s,x/N) \int\big(b_{i}(x/N)-\e_{i}(x)\big)f\xiw d\nualph\xiw\\
    &=G(s,x/N)\int b_{i}(x/N)\sum_{j\neq i}\e_{j}(x)f\xiw d\nualph \xiw\\
    &- G(s,x/N)\int b_{i}(x/N)\sum_{j\neq i}\e_{j}(x)f(\sigma_{i,x}\xiw)d\nualph\xiw \\
    &= -G(s,x/N)\int b_{i}(x/N)(1-\e_{i}(x))(f(\sigma_{i,x}\xiw)-f\xiw)d\nualph\xiw\\
    &\leq \frac{A}{2}\int b_{i}(x/N)(1-\e_{i}(x))\Big(\sqrt{f(\sigma_{i,x}\xiw)}-\sqrt{f\xiw} \Big)^2d\nualph\xiw\\
    & + \frac{1}{2A}\big( G(s,x/N)\big)^2\int b_{i}(x/N)(1-\e_{i}(x))\Big(\sqrt{f(\sigma_{i,x}\xiw)}+\sqrt{f\xiw} \Big)^2d\nualph\xiw,
    \end{split}
\end{equation}
\noindent where we used \eqref{useful2} in the last line replacing $A$ by $AN$, with $A>0$. Summing \eqref{Vt} over $\Gamma_N^-$ and multiplying by $\frac{a}{N^{d-1}}$ yields,
\begin{equation*}
    \begin{split}
    &\int_{\whsN}\frac{a}{N^{d-1}}\sum_{x\in \Gamma_{N}^-}G(s,x/N)(b_{i}(x/N)-\e_{i}(x))f\xiw d\nualph\xiw\\
    & \leq \frac{aAN^{\ttl}}{2N^{d-1}}D_{\widehat{b},\wht,N}(f,\nualph)+ \frac{a}{AN^{d-1}}\sum_{x\in \Gamma_{N}^-}\big(G(s,x/N)\big)^2\\
    &\leq \frac{aAN^{\ttl}}{2N^{d-1}}D_{\widehat{b},\wht,N}(f,\nualph) + \frac{a}{A}\|G^2\|_{\infty},
    \end{split}
\end{equation*}
where the second term in the first inequality comes from Cauchy-Schwarz's inequality, the fact that $f$ is a density, the change of variable formula \eqref{chtva} and the fact that each coordinate of $\widehat{b}$ is bounded by $1$. Therefore, using \eqref{Echh},  \eqref{IneggenCPRS} and \eqref{bord}  to bound $\langle L_N\sqrt{f},\sqrt{f}\rangle_{\nualph}$ and the fact that a Dirichlet form is positive we are left with 

\begin{equation}\label{ineglog2}
\begin{split}
         &\frac{1}{N^d}\log\Big(\mathbb{E}_{\nualph}\Big[\exp\Big(aN^d\int_{0}^t\frac{1}{N^{d-1}}\sum_{x\in \Gamma_{N}^-}G(s,x/N)\big(\e_{i,s}(x)-b_{i}(x/N) \big)ds  \Big) \Big] \Big)\\
     &\leq \int_0^t ds~  \underset{f}{\sup}\Big\{\frac{aAN^{\ttl}}{2N^{d-1}}D_{\widehat{b},\wht,N}(f,\nualph) + \frac{a}{A}\|G^2\|_{\infty} + \frac{1}{N^d}\langle L_N\sqrt{f},\sqrt{f}\rangle _{\nualph} \Big\} \\
     &\leq  T~ \underset{f}{\sup}\Big\{\frac{aAN^{\ttl}}{2N^{d-1}}D_{\widehat{b},\wht,N}(f,\nualph) + \frac{a}{A}\|G^2\|_{\infty} -\frac{N^2}{N^d}\big(\mathcal{D}_N(f,\nualph) + D_{\widehat{b},\wht,N}(f,\nualph)\big)+ C_1 +C_2 \Big\}\\
     & \leq T~ \underset{f}{\sup}\Big\{\Big(\frac{aAN^{\ttl}}{2N^{d-1}}-\frac{N^2}{N^d} \Big)D_{\widehat{b},\wht,N}(f,\nualph) \Big\} + \frac{a}{A}\|G^2\|_{\infty} + TC_1 + TC_2.
\end{split}
\end{equation}
Now, taking $A= \frac{2}{a}N^{1-\ttl}$, collecting \eqref{ineglog} and \eqref{ineglog2} we are left with
\begin{equation}
\begin{split}
    &\underset{N \rightarrow \infty}{\overline{\lim}}\frac{1}{N^d}\log\Big( \mathbb{P}_{\mu_{N}}\Big[\Big|\int_{0}^t\frac{1}{N^{d-1}}\sum_{x\in \Gamma_{N}^-}G(s,x/N)\big(\e_{i,s}(x)-b_{i}(x/N) \big)ds \Big| >\delta \Big]\Big)\\
    &\leq \underset{N \rightarrow \infty}{\overline{\lim}}\Big( -a\delta + K_0 + \frac{1}{2}a^2N^{\ttl-1}\|G\|_{\infty}^2 + TC_1 + TC_2\Big)\\
    &\leq  -a\delta + K_0+ TC_1 + TC_2
\end{split}
\end{equation}
and then, taking $a\rightarrow \infty$, the result follows.
\end{proof}

\subsubsection{Replacement lemma at the left-hand side boundary for $\ttl\geq 1$.}
For $\ttl\geq 1$, the replacement lemma at the boundary involves particle densities over small macroscopic boxes. Again, the same replacement lemma holds at the right-hand side boundary for $\ttr\geq 1$. In fact, we will see in the proof that the lemma holds for any positive value of $\ttl$, resp. $\ttr$, regardless of whether $\ttl$ resp. $\ttr \geq 1$. Here, as we are working in arbitrary dimension, some care must be taken in the proof when adapting the argument used for instance in \cite[Chapter 5]{kipnis_scaling_1999}.
\begin{prop} \label{lhsrep}
 For any sequence of measures $(\mu_N)_{N\geq 0}$ on $\whsN$, for any $G \in \mathcal{C}^{1,2}([0,T]\times \overline{B})$, for all $i\in\{1,2,3\}$ and any $t\in [0,T]$,
\begin{equation}\label{ReplDirichlet}
    \underset{\varepsilon \rightarrow 0}{\limsup}~  \underset{N \rightarrow \infty}{\limsup}~ \mathbb{E}_{\mu_{N}}\Big[~\Big|\frac{1}{N^{d-1}}\sum_{x\in \Gamma_{N}^-}\int_{0}^tG(s,x/N)(\e_{i,s}^{\varepsilon N}(x) -\e_{i,s}(x))ds \Big| ~\Big]=0.
\end{equation}
\end{prop}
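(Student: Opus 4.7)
The plan is to adapt the entropy and Feynman-Kac approach of Proposition \ref{Repdirich}, but with a crucial difference: when $\ttl \geq 1$, the prefactor $N^{2-\ttl}$ of the boundary generator is not large enough for the boundary Dirichlet form $D_{\widehat{b}, \wht, N}$ to absorb the test term as in \eqref{ineglog2}. Instead, one must exploit the accelerated bulk exchange dynamics (factor $N^2$) to compare $\e_i(x)$ at a boundary site to its spatial average on the mesoscopic box $\Lambda_x^{\varepsilon N}$, via a telescoping along nearest-neighbor paths inside the bulk.

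Concretely, I would fix a smooth reference profile $\widehat{\alpha}$ satisfying \eqref{a)} and \eqref{b)}, and use $d\mu_N/d\nualph \leq e^{K_{0}N^d}$ together with the entropy inequality, $e^{|x|}\leq e^x + e^{-x}$ and \eqref{Conv} to reduce the proof to showing that, for every $A>0$,
\begin{equation*}
\limsup_{\varepsilon\to 0}\;\limsup_{N\to\infty}\;\frac{1}{N^d}\log \mathbb{E}_{\nualph}\Big[\exp\Big(\pm A N^d \int_0^t V_s^{\varepsilon,N}\, ds\Big)\Big]
\end{equation*}
is bounded by a constant independent of $A$, where $V_s^{\varepsilon,N} := \frac{1}{N^{d-1}}\sum_{x\in\Gamma_N^-} G(s,x/N)(\e_{i,s}^{\varepsilon N}(x) - \e_{i,s}(x))$. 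Feynman-Kac then reduces this exponential moment to controlling
\begin{equation*}
\sup_f \Big\{A\langle V_s^{\varepsilon,N}, f\rangle_{\nualph} + \frac{1}{N^d}\langle L_N\sqrt{f},\sqrt{f}\rangle_{\nualph}\Big\}
\end{equation*}
over densities $f$ with respect to $\nualph$, and Lemma \ref{synthese} bounds the generator term above by $-\frac{1}{4N^{d-2}}\mathcal{D}_N(f,\nualph) + C_1 + C_2$ (after discarding the nonpositive boundary Dirichlet form).

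The core computation concerns $A\langle V_s^{\varepsilon,N}, f\rangle_{\nualph}$. For each $x\in\Gamma_N^-$ and $y\in\Lambda_x^{\varepsilon N}$, fix a nearest-neighbor lattice path $x = z_0, z_1, \ldots, z_m = y$ of length $m\leq 2d\varepsilon N$ and decompose $\e_i(y) - \e_i(x) = \sum_{k=0}^{m-1}(\e_i(z_{k+1}) - \e_i(z_k))$. A change of variables $\xiw \leftrightarrow (\xi^{z_k,z_{k+1}},\omega^{z_k,z_{k+1}})$ for the stirring dynamics, whose Radon-Nikodym correction with respect to $\nualph$ equals $1+O(1/N)$ by smoothness of $\widehat{\alpha}$, rewrites the $k$-th edge contribution against $f$ as an integral involving $f(\xi^{z_k,z_{k+1}},\omega^{z_k,z_{k+1}}) - f\xiw$, up to an $O(1/N)$ error per step. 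Factoring this difference as $(\sqrt{f(\xi^{z_k,z_{k+1}},\omega^{z_k,z_{k+1}})}-\sqrt{f\xiw})(\sqrt{f(\xi^{z_k,z_{k+1}},\omega^{z_k,z_{k+1}})}+\sqrt{f\xiw})$ and applying \eqref{useful2}, each edge is controlled by $\frac{1}{B}\int (\sqrt{f(\xi^{z_k,z_{k+1}},\omega^{z_k,z_{k+1}})}-\sqrt{f\xiw})^2 d\nualph + 4B$ for any $B>0$, the first summand being a local piece of the exchange Dirichlet form $\mathcal{D}_N(f,\nualph)$.

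The main obstacle, specific to dimension $d\geq 2$, is the combinatorial accounting. Choosing the family of paths so that each bond in $\Lambda_x^{\varepsilon N}$ is traversed by at most $O(\varepsilon N)$ paths issued from $x$, and observing that each bond in the slab adjacent to $\Gamma_N^-$ belongs to $\Lambda_x^{\varepsilon N}$ for at most $O((\varepsilon N)^{d-1})$ boundary sites $x$, the normalized sum reorganizes into
\begin{equation*}
A\langle V_s^{\varepsilon,N}, f\rangle_{\nualph} \le \frac{C A\|G\|_\infty}{B\, N^{d-1}}\,\mathcal{D}_N(f,\nualph) + C' A\|G\|_\infty B\varepsilon N + C'' A\varepsilon,
\end{equation*}
the last term collecting the cumulative $O(1/N)$ corrections along each $O(\varepsilon N)$-long path. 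Choosing $B = 4CA\|G\|_\infty/N$ matches the coefficient of $\mathcal{D}_N$ with the $\frac{1}{4N^{d-2}}$ coming from Lemma \ref{synthese}, so the Dirichlet contribution is absorbed; the second summand becomes $O(A^2\varepsilon)$, which vanishes as $\varepsilon\to 0$ at fixed $A$. Plugging back into the entropy inequality yields $\limsup_{\varepsilon\to 0}\limsup_{N\to\infty} \mathbb{E}_{\mu_N}[|\int_0^t V_s^{\varepsilon,N}\, ds|] \leq (K_0 + T(C_1+C_2))/A$ for every $A>0$, and letting $A\to\infty$ concludes. Observe that the argument never uses that $\ttl \geq 1$, which is consistent with the remark that the lemma actually holds for all positive $\ttl$.
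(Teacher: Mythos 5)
Your proposal shares the paper's outer skeleton (entropy inequality with a reference measure $\nualph$ satisfying \eqref{a)}--\eqref{b)}, removal of the absolute value via \eqref{Conv}, Feynman--Kac, Lemma \ref{synthese} to bound $\frac{1}{N^d}\langle L_N\sqrt f,\sqrt f\rangle_{\nualph}$ by $-\tfrac{1}{4}N^{2-d}\mathcal{D}_N(f,\nualph)+C_1+C_2$, a telescoping estimate absorbed into $\mathcal{D}_N$ by tuning the auxiliary parameter, and finally $A\to\infty$), and your final scalings ($B\sim A\|G\|_\infty/N$, residual $O(A^2\varepsilon)$) match the paper's after relabelling. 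Where you genuinely diverge is the middle step. The paper first exploits the product structure $\Gamma_N^-=\{-N\}\times\Tnd$: for each fixed shift $(j_1,\check k)$ it Taylor-expands $G$ and re-indexes the sum over the transverse torus $\Tnd$, which kills the transverse component $\check k$ up to an $O(\varepsilon)$ error. After that only one-dimensional telescoping along $e_1$ remains, each bond $(x+\ell e_1,x+(\ell+1)e_1)$ is used $O(\varepsilon N)$ times, and no path combinatorics are needed. You instead telescope along genuine $d$-dimensional lattice paths from $x$ to every $y\in\Lambda_x^{\varepsilon N}$ and account for bond multiplicities. Your route is more generic (it does not use that the boundary is a full torus slice and would adapt to less symmetric domains), at the price of the combinatorial bookkeeping; the paper's route is shorter and cleaner but tied to the cylinder geometry.

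There is, however, a concrete flaw in your combinatorial accounting. The claim that one can choose the paths so that ``each bond in $\Lambda_x^{\varepsilon N}$ is traversed by at most $O(\varepsilon N)$ paths issued from $x$'' is impossible for $d\geq 2$: the $(\varepsilon N)^d$ paths issued from a fixed $x$ all leave through one of the $2d$ bonds incident to $x$, so at least one such bond carries $\geq (\varepsilon N)^d/(2d)\gg \varepsilon N$ paths. Consequently the product of your two bounds ($O(\varepsilon N)$ per $x$ times $O((\varepsilon N)^{d-1})$ relevant $x$'s) is not a valid derivation of the per-bond weight. The estimate you want is nonetheless true: what must be shown is the aggregate bound $\sum_{x\in\Gamma_N^-} n_b(x)\leq C(\varepsilon N)^d$ for each fixed bond $b$, where $n_b(x)$ is the number of destinations $y\in\Lambda_x^{\varepsilon N}$ whose path from $x$ uses $b$. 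With canonical staircase paths this holds because the bonds that are heavily used from a given $x$ (those on its spine) are used by only $O(1)$ boundary sites, while bonds used by many boundary sites carry only $O(\varepsilon N)$ paths from each; the total $\sum_x\sum_y \mathrm{len}(x,y)$ divided by the number of bonds in the slab also confirms the average is $O((\varepsilon N)^d)$. So your final inequality $A\langle V_s^{\varepsilon,N},f\rangle_{\nualph}\leq \frac{CA\|G\|_\infty}{BN^{d-1}}\mathcal{D}_N(f,\nualph)+C'A\|G\|_\infty B\varepsilon N+C''A\varepsilon$ survives, but the justification needs to be rewritten around the joint count $\sum_x n_b(x)$ rather than a per-$x$ bound that cannot hold. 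The paper's transverse-averaging trick avoids this issue entirely, which is presumably why it was chosen.
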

\begin{proof}
For a vector $x=(x_1,\cdots,x_d)\in B_N$, write
$x=(x_1, \check{x})$, where $\check{x}=(x_2,\cdots,x_d)\in \Tnd$. First, consider the expression in the expectation without absolute value and the time integral, and rewrite it for any $s\in[0,t]$ as
\begin{equation}
 \begin{aligned}
  \mathfrak{I}_{N,\varepsilon}(G_s,\eta_s):= \sum_{j_1=0}^{\varepsilon N}~ 
  \sum_{\check{k}\in [-\varepsilon N,\varepsilon N]^{d-1}}
  \frac{1}{N^{d-1}} \sum_{x\in \Gamma_N^-  }\frac{1}{|\Lambda_x^{\varepsilon N}|}G(s,x/N)
  \Big(\eta_{i,s}(x+(j_1, \check{k})) -\eta_{i,s}(x)\Big),
 \end{aligned}
\end{equation}
where we recall that $\Lambda_x^{\varepsilon N}$ is defined in \eqref{box}. For $x\in \Gamma_N^- $,
\begin{equation}\label{Bis}
    |\Lambda_x^{\varepsilon N}|= (\varepsilon N +1)(2\varepsilon N +1)^{d-1}.
\end{equation}
Fix $(j_1,\check{k})\in \{0,\cdots,\varepsilon N\}\times [-\varepsilon N,\varepsilon N]^{d-1}$ . The sum over $x\in \Gamma_N^-$ can be handled in the following way
\begin{equation}\label{hand}
 \begin{aligned}
  & \frac{1}{N^{d-1}}\sum_{x\in \Gamma_N^-}G(s,x/N)
  \eta_{i,s}(x+(j_1, \check{k})) \\
&=  \frac{1}{N^{d-1}}\sum_{x\in \Gamma_N^-  }
  \Big(G(s,x/N) - G(s,(x+(j_1, \check{k}))/N)\Big)
  \eta_{i,s}(x+(j_1, \check{k})) \\
&+ \frac{1}{N^{d-1}}   \sum_{x\in \Gamma_N^-  }
   G(s,(x+(j_1, \check{k}))/N)
  \eta_{i,s}(x+(j_1, \check{k})). \,
 \end{aligned}
\end{equation}
Since $G$ is twice differentiable in space, a Taylor expansion allows us to bound the first term on the right-hand side of \eqref{hand} by $d\varepsilon C_G(N)$ where $C_G(N)$ is uniformly bounded in $N$ by a constant $C_G$, depending only on $G$. Now, rewrite the last term in \eqref{hand} as follows:
\begin{equation}\label{hannd}
 \begin{aligned}
  &\frac{1}{N^{d-1}}\sum_{x\in \Gamma_N^-  }
   G(s,(x+(j_1, \check{k}))/N)
  \eta_{i,s}(x+(j_1, \check{k}))\\
  &= \frac{1}{N^{d-1}} \sum_{\check{x}\in \Tnd}G(s, (-N+j_1,\check{x}+ \check{k})/N) \e_{i,s}((-N+j_1,\check{x}+ \check{k}))\\
  &= \frac{1}{N^{d-1}} \sum_{\check{x}\in \Tnd}G(s, (-N+j_1,\check{x})/N) \e_{i,s}((-N+j_1,\check{x}))\\
&  = \frac{1}{N^{d-1}}\sum_{x\in \Gamma^-_N  }
   G(s,x/N)
  \eta_{i,s}(x+j_1e_1)) + \varepsilon C'_G(N),
 \end{aligned}
\end{equation}
where again, to get the last line, we used a Taylor expansion of $G$ and with $C'_G(N)$ uniformly bounded in $N$ by a constant $C'_G$ depending only on $G$. Using \eqref{hand}, \eqref{Bis} and \eqref{hannd}, we get
\begin{equation*}
 \begin{aligned}
\mathfrak{I}_{N,\varepsilon}(G_s,\eta_s)& = \varepsilon C''_G(N) +  \frac{1}{\varepsilon N +1} \sum_{j_1=0}^{\varepsilon N}~ 
  \frac{1}{N^{d-1}} \sum_{x \in \Gamma_N^-}G(s,x/N) \big[\e_{i,s}(x+j_1e_1)-\e_{i,s}(x) \big]\\
 & := \varepsilon C''_G(N) \, +\mathfrak{R}_{N,\varepsilon}(G_s,\eta_s),
 \end{aligned}
\end{equation*}
with $C''_G(N)$, uniformly bounded by $C''_G$, a constant that only depends on $G$.
Therefore, we are left to prove that
\[
 \underset{\varepsilon \rightarrow 0}{\limsup}~  \underset{N \rightarrow \infty} {\limsup}~ 
\mathbb{E}_{\mu_{N}}\Big[\Big|
\int_0^t  \mathfrak{R}_{N,\varepsilon}(G_s,\eta_s) ds
 \Big|\Big]=0\, .
\]
Consider $\widehat{\alpha}$ a smooth profile satisfying conditions \eqref{a)} and \eqref{b)} . By the entropy inequality (see \cite[Appendix 1]{kipnis_scaling_1999}), for any $A>0$,
\begin{equation}\label{RL1}
\begin{split}
    &\mathbb{E}_{\mu_{N}}\Big[\Big|
\int_0^t  \mathfrak{R}_{N,\varepsilon}(G_s,\eta_s) ds
 \Big|\Big]\\
    &\leq \frac{1}{AN^d}H(\mu_{N}|\nualph) + \frac{1}{AN^d}\log \mathbb{E}_{\nualph}\Big[\exp\Big(AN^d\Big|\int_0^t  \mathfrak{R}_{N,\varepsilon}(G_s,\eta_s) ds\Big|\Big) \Big].
\end{split}
\end{equation}
As $B_N$ is finite, there is a constant $K_{0}>0$ such that $H(\mu_{N}|\nualph) \leq K_{0}N^d $ so the first term in \eqref{RL1} is bounded by $K_{0}/A$. Let us show that the second term tends to zero when $N\rightarrow \infty$ and $\varepsilon \rightarrow 0$ and then take $A$ arbitrarily big. Again, by \eqref{Conv}, it is enough to show that the second term in \eqref{RL1} without the absolute values in the exponential, tends to zero.  By Feynman-Kac's inequality, 
\begin{equation}\label{Sup}
\begin{split}
    &\frac{1}{AN^d}\log \mathbb{E}_{\nualph}\Big[\exp\Big(AN^d\int_0^t  \mathfrak{R}_{N,\varepsilon}(G_s,\eta_s) ds\Big) \Big]\\
    &\leq \int_0^t ds \; \underset{f}{\sup}\Big[ \int
    \mathfrak{R}_{N,\varepsilon}(G_s,\eta)
    \big)f\xiw d\nualph\xiw + \frac{1}{AN^d}\langle L_{N}\sqrt{f},\sqrt{f}\rangle_{\nualph}\Big]
\end{split}
\end{equation}
where the supremum is taken over densities with respect to $\nualph$. Now, rewrite $\mathfrak{R}_{N,\varepsilon}(G_s,\eta)$ using a telescopic sum to write the differences $\e_{i}(x+j_1e_1)-\e_{i}(x) $: 
\[\mathfrak{R}_{N,\varepsilon}(G_s,\eta) = \frac{1}{\varepsilon N +1} \sum_{j_1=0}^{\varepsilon N}
  \frac{1}{N^{d-1}} \sum_{x \in \Gamma_N^-} \sum_{\ell =0}^{j_1-1}
  G(s,x/N) \Big[\eta_{i}((\ell +1)e_1 +x) -\eta_{i}(\ell e_1 + x) \Big] \]
Fix $0\le \ell\le j_1\le \varepsilon N$. Performing the change of variable
\[\xiw \rightarrow (\xi^{\ell e_1 +x,(\ell +1)e_1 +x},\omega^{\ell e_1 +x,(\ell +1)e_1 +x}):= \xiw^{\ell,x} \]
and using \eqref{chtva1},
\begin{equation}\label{Up1}
 \begin{aligned}
    &\int \big( \eta_{i}((\ell+1) e_1 + x)-\eta_{i}(\ell e_1 + x)\big)f\xiw  d\nualph\xiw\\
    &=   \int \eta_{i}(\ell e_1 + x)  \Big[
    f(\xiw^{\ell,x})
    - f\xiw \Big] d\nualph\xiw\\
    &+  \int \eta_{i}((\ell+1) e_1 + x) f\xiw  
   \Big(1- \frac{\nualph(\xiw^{\ell,x})}{\nualph\xiw}  \Big) d\nualph\xiw.
   \end{aligned}
\end{equation}
To deal with the first term on the right-hand side of \eqref{Up1} using inequality \eqref{useful2}, this term is bounded by
\begin{equation*}
\begin{split}
     & \frac{B}{2}\int \eta_{i}(\ell e_1 + x)  \Big[
    \sqrt{f(\xiw^{\ell,x})}
    - \sqrt{f\xiw }\Big]^2 d\nualph\xiw\\
    &+ \frac{1}{2B}\int \eta_{i}(\ell e_1 + x)  \Big[
    \sqrt{f(\xiw^{\ell,x})}
    + \sqrt{f\xiw }\Big]^2 d\nualph\xiw\\
    &\le \frac{B}{2}\int \eta_{i}(\ell e_1 + x)  \Big[
    \sqrt{f(\xiw^{\ell,x})}
    - \sqrt{f\xiw }\Big]^2 d\nualph\xiw\\
    &+  \frac{1}{B}\int \eta_{i}(\ell e_1 + x)  \Big[
    f(\xiw^{\ell,x})
    + f\xiw \Big] d\nualph\xiw\\
    &\leq \frac{B}{2}\int \eta_{i}(\ell e_1 + x)  \Big[
    \sqrt{f(\xiw^{\ell,x})}
    - \sqrt{f\xiw }\Big]^2 d\nualph\xiw \\
    &+  \frac{1}{B} \Big[ 1+ \int \eta_{i}(\ell e_1 + x) 
    f(\xiw^{\ell,x}) d\nualph\xiw \Big]
\end{split}
\end{equation*}
where $B>0$ will be chosen later and where we used that $f$ is a density with respect to $\nualph$ in the last line. Note that $\xiw \mapsto f(\xiw^{\ell,x})$ is not a density but we can deal with the last integral term as follows:
\begin{equation}
    \begin{split}
    &\int \eta_{i}(\ell e_1 + x) 
    f(\xiw^{\ell,x}) d\nualph\xiw \\
    &=\sum_{j\neq i} \int \eta_{i}(\ell e_1 + x) \eta_{j}((\ell+1) e_1 + x) 
    f(\xiw^{\ell,x}) d\nualph\xiw\\
    &= \sum_{j\neq i} \int \eta_{j}(\ell e_1 + x) \eta_{i}((\ell+1) e_1 + x) \Big(1+ R_{i,j}^{\ell e_1 + x,(\ell+1) e_1 + x}(\widehat{\alpha}) \Big)
    f\xiw d\nualph\xiw\\
    &\leq 1+ \frac{C}{N},
    \end{split}
\end{equation}
where in the second line we used the change of variable formula \eqref{chtva1} and, in the last line, the fact that $f$ is a density with respect to $\nualph$ and that $R_{i,j}^{\ell e_1 + x,(\ell+1) e_1 + x}(\widehat{\alpha})=O(N^{-1})$ so bounded by $C/N$ where $C$ is a constant.
Therefore, for $N$ large enough,
for all $0 \le \ell\le j_1\le \varepsilon N$,
\begin{equation*}\label{Up2}
 \begin{aligned}
    &\int \big( \eta_{i}((\ell+1) e_1 + x)-\eta_{i}(\ell e_1 + x)\big)f\xiw  d\nualph\xiw \\
    & \le \frac{B}{2}\int \eta_{i}(\ell e_1 + x)  \Big[
    \sqrt{f(\xiw^{x,\ell})}
    - \sqrt{f\xiw }\Big]^2 d\nualph\xiw + \frac{2}{B} + \frac{C}{BN}.
   \end{aligned}
\end{equation*}
Now, let us deal with the second term in \eqref{Up1}. Using the explicit expression of the product measure $\nualph$, one  has that for $(x,x+ \ell e_1) \in B_{N}^2$
$$\frac{\nualph(\xi^{x,x+e_{\ell}},\omega^{x,x+e_{\ell}})}{\nualph\xiw} = \prod_{i=1}^3\Big(\frac{\alpha_{i}(x/N)}{\alpha_{0}(x/N)} \Big)^{\e_{i}(x+e_{\ell})-\e_{i}(x)}\Big(\frac{\alpha_{i}((x+e_{\ell})/N)}{\alpha_{0}((x+e_{\ell})/N)}  \Big). $$
Now using that $\alpha_{i}\big(\frac{x+e_{\ell}}{N}\big)= \alpha_{i}\big(\frac{x}{N}\big) + O\big(\frac{1}{N}\big)$
we have the following inequality: there is a constant $\Tilde{C}>0$ such that
\begin{equation} \label{Ineg}
    \Big|1- \frac{\nualph(\xi^{x,x+e_{\ell}},\omega^{x,x+e_{\ell}})}{\nualph\xiw}  \Big|\leq \frac{\Tilde{C}}{N}.
\end{equation}
Therefore, the second term in \eqref{Up1} is bounded by $\Tilde{C}/N$.  

We are left with
\begin{equation}
    \begin{split}
        &\int \mathfrak{R}_{N,\varepsilon}(G_s,\eta) f\xiw d\nualph\xiw\\
    &\leq \frac{1}{\varepsilon N +1}\sum_{j_1=0}^{\varepsilon N}  \frac{1}{N^{d-1}}\sum_{x\in \Gamma_N^{-}}\sum_{\ell=0}^{j_1-1}G_s\Big(\frac{x}{N}\Big)\\
    &\times \Big[ \frac{B}{2}\int \eta_{i}(\ell e_1 + x)  \Big[
    \sqrt{f(\xiw^{x,\ell})}
    - \sqrt{f\xiw }\Big]^2 d\nualph\xiw+ \frac{2}{B} + \frac{C}{BN} + \frac{\Tilde{C}}{N}\Big]\\
    &\leq \frac{1}{\varepsilon N +1}\sum_{j_1=0}^{\varepsilon N}\frac{B}{2N^{d-1}} \|G\|_{\infty}\mathcal{D}_N(f,\nualph) + \frac{\|G\|_{\infty} 2\varepsilon N}{B} + \frac{\|G\|_{\infty}C\varepsilon }{B} + \frac{\|G\|_{\infty}\Tilde{C}}{N}\\
    &\leq \frac{B}{2N^{d-1}} \|G\|_{\infty}\mathcal{D}_N(f,\nualph) + \frac{2\|G\|_{\infty}\varepsilon N}{B} + \frac{\|G\|_{\infty}C\varepsilon }{B} + \frac{\|G\|_{\infty}\Tilde{C}}{N}.
    \end{split}
\end{equation}
This, combined with \eqref{Sup} as well as Lemma \ref{synthese} yields:
\begin{equation}\label{telescopd}
\begin{aligned}
            &\frac{1}{AN^d}\log ~ \mathbb{E}_{\nualph}\Big[\exp\Big(AN^d\int_0^t\mathfrak{R}_{N,\varepsilon}(G_s,\eta_s) ds\Big) \Big]\\
&\leq T~ \underset{f}{\sup}\Big[\Big( 
            \frac{\|G\|_{\infty} B}{2} N^{1-d}- \frac{N^{2-d}}{4A}\Big)\mathcal{D}_{N}(f,\nualph)\Big]+  T\varepsilon\|G\|_{\infty}\Big(\frac{2N}{B}+ \frac{C}{B} \Big)+ \frac{TC_4}{A}+ \frac{T\|G\|_{\infty}\Tilde{C}}{N}
\end{aligned}
\end{equation}
with $C_{4}>0$, a constant that is uniform in $N$ and $\varepsilon$.
Taking $B=\frac{N}{4A\|G\|_{\infty}}$ and putting together \eqref{Sup}, \eqref{RL1} and \eqref{telescopd} yields
\begin{equation*}
\begin{aligned}
        \mathbb{E}_{\mu_{N}}\Big[\int_0^t\mathfrak{R}_{N,\varepsilon}(G_s,\eta_s) ds \Big] \leq T\varepsilon\|G\|_{\infty}\Big(8A\|G\|_{\infty}+\frac{4A\|G\|_{\infty}C}{N}\Big) + \frac{K_0+TC_4}{A}+ \frac{T\|G\|_{\infty}\Tilde{C}}{N},
\end{aligned}        
\end{equation*}
so
\begin{equation*}
\begin{aligned}
        \underset{N \rightarrow \infty} {\limsup}~\mathbb{E}_{\mu_{N}}\Big[\Big|\int_0^t\mathfrak{R}_{N,\varepsilon}(G_s,\eta_s) ds \Big|\Big] \leq 8\varepsilon T\|G\|_{\infty}^2A + \frac{K_0+TC_4}{A},
\end{aligned}        
\end{equation*}
taking $\varepsilon \rightarrow 0$, and then $A \rightarrow \infty$, the result follows.
\end{proof}
\subsection{Energy estimates}
In view of the proof of uniqueness of the limit of the sequence of probability measures $(Q_N^{\wht})_{N\geq 1}$, we state that any limiting measure $Q^{\wht}$ is concentrated on a trajectory belonging to a specific functional space. This allows to define the hydrodynamic limit at the boundary.
\begin{prop}\label{Sobo}
Let $\wht \in (\R^+)^2$ and $Q^{\wht}$ be a limit point of the sequence of probability measures $(Q_{N}^{\wht})_{N\geq 1}$. Then, the probability measure $Q^{\wht}$ is concentrated on paths $\widehat{\rho}(t,u)du$ such that for every $1\leq i \leq 3$, $\rho_{i}$ belongs to $L^2((0,T);\mathcal{H}^1(B))$.
\end{prop}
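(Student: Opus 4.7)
The plan is to follow the now-standard energy estimate strategy (in the spirit of Kipnis--Varadhan, and as carried out in \cite{kipnis_scaling_1999}), adapted here to the three-species setting and to the fact that the reference product measure $\nualph$ has a slowly varying profile. Fix a limit point $Q^{\wht}$ and a subsequence (still indexed by $N$) such that $Q_N^{\wht}\Rightarrow Q^{\wht}$. Fix $i\in\{1,2,3\}$ and a direction $e_k$, $1\leq k\leq d$. The goal is to produce a constant $K>0$, independent of the chosen limit point, such that
\begin{equation}\label{EE-cont}
E^{Q^{\wht}}\Big[\sup_{G\in \mathcal{C}_k^{\infty}((0,T)\times B)}\Big\{\int_0^T \langle \rho_{i,s},\partial_{e_k}G_s\rangle\,ds - C_0\int_0^T \|G_s\|_2^2\, ds\Big\}\Big] \leq K,
\end{equation}
for some $C_0$ depending only on $D$. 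By the Riesz representation theorem applied coordinate-wise in $s$, \eqref{EE-cont} implies that $Q^{\wht}$-a.s. the function $\rho_i$ admits a weak partial derivative $\partial_{e_k}\rho_i\in L^2((0,T)\times B)$, with $L^2$ norm controlled by $K$. Doing this for each $i$ and each $k$ yields $\rho_i\in L^2((0,T);\mathcal{H}^1(B))$.

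The key is to prove the microscopic analogue of \eqref{EE-cont}. Fix $\widehat{\alpha}$ a smooth profile satisfying \eqref{a)} and \eqref{b)}. By the entropy inequality together with Feynman--Kac, for any $A>0$ and any test function $G\in \mathcal{C}_k^\infty((0,T)\times B)$,
\begin{equation*}
\begin{split}
& E_{\mu_N}\Big[\int_0^T \frac{1}{N^d}\sum_{x\in B_N}(\partial_{e_k}^N G)(s,x/N)\eta_{i,s}(x)\,ds - C_0\int_0^T\frac{1}{N^d}\sum_{x\in B_N}G(s,x/N)^2 ds\Big]\\
&\leq \frac{K_0}{A} + \frac{1}{AN^d}\int_0^T ds\,\sup_{f}\Big\{AN^d\!\!\int\Big(\tfrac{1}{N^d}\!\sum_{x}(\partial_{e_k}^N G)(s,x/N)\eta_i(x)\Big)\!f\,d\nualph + \langle L_N\sqrt{f},\sqrt{f}\rangle_{\nualph}\Big\},
\end{split}
\end{equation*}
where the $C_0\|G\|_2^2$ term has been absorbed into the exponent (the Feynman--Kac variational formula is applied to the combined integrand) and $K_0$ comes from the bound $H(\mu_N|\nualph)\leq K_0 N^d$. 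Now, rewriting the microscopic gradient $(\partial_{e_k}^N G)(s,x/N)\eta_i(x)$ as a sum of exchange-type increments and applying the change of variables $\xiw\mapsto\xiw^{x,x+e_k}$ together with \eqref{useful2}, one bounds the first term in the supremum by
\begin{equation*}
\frac{B}{2} \mathcal{D}_N(f,\nualph) + \frac{C_0}{N^d}\sum_x G(s,x/N)^2 + O(N^{-1}),
\end{equation*}
for an appropriate tuning of $B=B(A)$ (following exactly the telescopic decomposition used in the proof of Proposition \ref{lhsrep}). Using Lemma \ref{synthese} to bound $\langle L_N\sqrt{f},\sqrt{f}\rangle_{\nualph}$ by $-\tfrac{N^{2-d}}{4}\mathcal{D}_N(f,\nualph)$ plus lower-order terms and choosing $B$ so that the Dirichlet form contribution is nonpositive, the supremum becomes bounded by a constant $K$ uniformly in $N$ and $G$.

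Next I pass to the limit. Approximating $\partial_{e_k}G$ by $\partial_{e_k}^N G$ (both uniformly in $s$), one obtains
\begin{equation*}
E_{\mu_N}\Big[\int_0^T \langle\pi_{i,s}^N,\partial_{e_k}G_s\rangle\,ds - C_0\int_0^T \|G_s\|_2^2 ds\Big]\leq K + o_N(1).
\end{equation*}
The supremum over a countable dense family $\{G_j\}_{j\geq 1}\subset \mathcal{C}_k^\infty((0,T)\times B)$ can then be handled by the standard device of replacing it by $\max_{j\leq M}$, passing to the limit $N\to\infty$ using weak convergence $Q_N^{\wht}\to Q^{\wht}$ and the continuity of each $\pi\mapsto \int\langle\pi_{i,s},\partial_{e_k}G_{j,s}\rangle ds - C_0\|G_j\|_{L^2}^2$ under the Skorohod topology, and then sending $M\to\infty$ by monotone convergence. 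This yields \eqref{EE-cont} with the same constant $K$, which concludes the proof.

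The main technical obstacle is the derivation of the microscopic gradient bound in a clean uniform form: on the one hand, the reference measure $\nualph$ is inhomogeneous, which forces one to carry along correction terms of the type \eqref{Ineg} coming from $\nualph(\xiw^{x,x+e_k})/\nualph\xiw$; on the other hand, the boundary generator $N^2 L_{\widehat{b},\wht,N}$ contributes negative Dirichlet form terms only (by \eqref{bord}) and can be dropped, but one must make sure that the gradient identity used for exchange increments does not generate uncontrolled contributions near $\Gamma_N$. Both issues are handled exactly as in the proof of Proposition \ref{lhsrep}, so no new input beyond Lemma \ref{synthese} is required.
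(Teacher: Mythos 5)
Your proposal is correct and follows essentially the same route as the paper: an energy estimate of the form \eqref{EESP4} proved for a countable dense family of compactly supported test functions via the entropy inequality and Feynman--Kac, a summation by parts combined with the change of variables \eqref{chtva1} and Lemma \ref{synthese} to absorb the gradient term into the Dirichlet form, and finally the Riesz representation theorem to identify $\partial_{e_k}\rho_i\in L^2$. The paper itself only sketches this argument by reference to \cite[Chapter 5, Lemma 7.2 and Theorem 7.1]{kipnis_scaling_1999}, and your write-up supplies the same steps at a comparable (in fact slightly greater) level of detail.
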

This follows from the Lemma below and the Riesz Representation Theorem. 

\begin{lemma} \label{Esp} For any $\wht \in (\R^+)^2$, there is a constant $K_{\wht}>0$ such that for every $1\leq i \leq 3$,
\begin{equation}\label{EESP4}
    \mathbb{E}_{Q^{\wht}}\Big[ \underset{H}{\sup} ~\Big( \int_{0}^T \int_{B}\sum_{k=1}^d \partial_{e_k}H(s,u)\rho_{i}(s,u)duds - K_{\wht}\int_{0}^T\int_{B}H(s,u)^2duds\Big)\Big]<\infty, 
\end{equation}
where the supremum is carried over functions $H\in \mathcal{C}_{c}^{0,2}([0,T]\times B)$.
\end{lemma}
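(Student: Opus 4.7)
The plan is to establish \eqref{EESP4} by a standard duality-plus-entropy argument: I will control an exponential moment uniformly in $H$, then interchange the supremum and the expectation through a countable-dense approximation. Fix a countable dense sequence $(H_\ell)_{\ell\ge 1}$ in $\mathcal{C}_c^{0,2}([0,T]\times B)$, so that by monotone convergence the supremum in \eqref{EESP4} is the increasing limit in $n$ of $\max_{\ell\le n}$ applied to the integrand. Since the integrand is Skorohod-continuous in the trajectory, the portmanteau theorem along the converging subsequence $Q_{N_j}^{\wht}\to Q^{\wht}$, together with the elementary bound $\max_{\ell\le n}a_\ell\le N^{-d}\log\sum_{\ell\le n}e^{N^d a_\ell}$, reduces the problem to exhibiting constants $K_{\wht},C>0$ such that for every $H\in \mathcal{C}_c^{0,2}([0,T]\times B)$ and every $N$,
\[
\frac{1}{N^d}\log\mathbb{E}_{\mu_{N}}\!\Big[\exp\!\Big(\int_0^T\!\sum_{x\in B_N}\sum_{k=1}^d\partial_{e_k}^N H(s,x/N)\eta_{i,s}(x)\,ds - N^d K_{\wht}\!\!\int_0^T\!\!\|H_s\|_2^2\,ds\Big)\Big]\le C.
\]

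Fix $H$ and a smooth profile $\widehat{\alpha}$ satisfying \eqref{a)} and \eqref{b)}. Because $H$ has compact support in $B$, a discrete integration by parts produces no boundary term for $N$ large and rewrites the inner sum as $-N^{-(d-1)}\sum_x H(s,x/N)(\eta_{i,s}(x+e_k)-\eta_{i,s}(x))$. Using the bound $d\mu_N/d\nualph\le e^{K_0N^d}$, the entropy inequality followed by Feynman-Kac's inequality reduces the matter to bounding, uniformly in densities $f$ with respect to $\nualph$ and in $s\in[0,T]$,
\[
\sum_{k=1}^d\frac{1}{N^{d-1}}\sum_xH(s,x/N)\!\!\int\!(\eta_i(x)-\eta_i(x+e_k))f\,d\nualph\;+\;\frac{1}{N^d}\langle L_N\sqrt{f},\sqrt{f}\rangle_{\nualph}
\]
by $C+K_{\wht}\|H_s\|_2^2$. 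The change of variable $\xiw\mapsto\xiwk$ combined with \eqref{Ineg} yields $\int(\eta_i(x)-\eta_i(x+e_k))f\,d\nualph=\int\eta_i(x+e_k)(f(\xiwk)-f\xiw)d\nualph+O(1/N)$, where the $O(1/N)$ error, summed against $|H(s,x/N)|/N^{d-1}$ and bounded in $\ell^1$-$\ell^2$ duality, is absorbed into $\|H_s\|_2^2+\text{const}$. Factorising the difference as $f(\xiwk)-f\xiw=(\sqrt{f(\xiwk)}-\sqrt{f\xiw})(\sqrt{f(\xiwk)}+\sqrt{f\xiw})$ and applying \eqref{useful2} with a tuning parameter of order $N$ yields, after summing in $x$, an upper bound of the form $\frac{1}{8N^{d-2}}\mathcal{D}_N(f,\nualph)+C_0\|H_s\|_2^2$.

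Invoking Lemma \ref{synthese}: \eqref{Echh} provides $N^{-d}\langle\mathcal{L}_N\sqrt{f},\sqrt{f}\rangle_{\nualph}\le -(4N^{d-2})^{-1}\mathcal{D}_N(f,\nualph)+O(N^{-2})$, \eqref{IneggenCPRS} contributes only a harmless constant, and \eqref{bord} contributes only a non-positive boundary-Dirichlet term. Thus, with the Young parameter chosen so that the positive $\mathcal{D}_N$ coefficient is strictly dominated by the negative one, the Dirichlet-form contributions cancel in net and the variational problem is bounded by $C+K_{\wht}\|H_s\|_2^2$ independently of $f$. Integrating in $s\in[0,T]$ and absorbing the constant prefactor into $K_{\wht}$ produces the required exponential-moment estimate. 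The principal difficulty is precisely the balancing of constants: one must tune Young's parameter so that the Dirichlet form produced by the main term is exactly dominated by \eqref{Echh} while the $\|H_s\|_2^2$ coefficient stays independent of $N$, and simultaneously control the $O(1/N)$ defect from the non-invariance of $\nualph$ under exchanges (inequality \eqref{Ineg}) against the factor of order $N$ generated by the sum in $x$. The boundary dynamics play no direct role (since $H$ vanishes near $\Gamma$); they enter only through the choice of reference measure $\nualph$, which must satisfy \eqref{b)} to guarantee the favorable sign in \eqref{bord}.
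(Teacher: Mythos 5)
Your proof is correct and follows essentially the same route the paper indicates for Lemma \ref{Esp}: a countable dense family combined with the exponential Chebyshev/Feynman--Kac reduction, a discrete integration by parts (with no boundary terms since $H$ has compact support), the change of variable \eqref{chtva1}/\eqref{Ineg}, Young's inequality to produce a Dirichlet form absorbed by \eqref{Echh}, and the sign of \eqref{bord} --- i.e.\ the argument of \cite[Chapter 5, Lemma 7.2]{kipnis_scaling_1999} that the authors cite. The balancing of the Young parameter so that $K_{\wht}$ stays independent of $N$ is exactly the point the paper leaves implicit, and you handle it correctly.
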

For the proof of Lemma \ref{Esp}, which we do not detail here, one can follow the arguments in \cite[ Lemma 7.2 Chapter 5]{kipnis_scaling_1999}. First prove \eqref{EESP4} for a dense and countable set of elements of $\mathcal{C}_{c}^{0,2}([0,T]\times B)$ thanks to Feynmann-Kac's inequality. Then, use an integration by parts to deal with the spatial derivatives in $H$, as well as the change of variable \eqref{chtva1}. To recover Proposition \ref{Sobo} from Lemma \ref{Esp} and the Riesz Representation Theorem, we also refer to \cite[Chapter 5, Theorem 7.1]{kipnis_scaling_1999}.

\subsection{Characterization of the limit point in the (Dirichlet ; Robin) mixed regime}
In order to show that the limit point of the sequence of probability measures $(Q_N^{\wht})_{N\geq 1}$ lies on the trajectory with density profile the unique solution of the hydrodynamic equation associated to $\wht$ and $\widehat{\gamma}$, we give a characterization result (see Proposition \ref{Caract}). We will focus on the (Dirichlet ; Robin) mixed regime since the (Neumann ; Robin) mixed regime can be proved following the same strategy. Therefore, take $\ttl\in [0,1)$ and $\ttr= 1$. 

As mentioned in the introduction, in one dimension, the macroscopic trajectories are continuous in space and their values at the boundaries are defined in the classical sense. This is no longer valid in higher dimension. To deal with this difficulty we use the regularity of the trajectories proved in Proposition \ref{Sobo}: the trajectories lie in $L^2([0,T],\mathcal{H}^1(B))$ so their values at the boundary are defined via the trace operator (see Lemma \ref{Trace}).

\begin{prop}\label{Caract}
If $Q^{\wht}$ is a limit point of the sequence of probability measures $(Q_{N}^{\wht})_{N\geq 1}$, then
\begin{equation}\label{caracdirichlet}
\begin{split}
    Q^{\theta}\Big[&\widehat{\pi},~ \Big|I_{\whg}(\whr)(t)\\
    &+ D\sum_{i=1}^3\int_{0}^t\Big[\int_{\Gamma^-}b_{i}(r)(\partial_{e_{1}}G_{i,s})(r)n_{1}(r).dS(r)+ \int_{\Gamma^+}\rho_{i}(s,r)(\partial_{e_{1}}G_{i,s})(r)n_{1}(r).dS(r)\Big]ds\\
    &   -\sum_{i=1}^3\int_{0}^t\int_{\Gamma^+}G_{i}(r)(b_{i}(r)-\rho_{i}(s,r))n_{1}(r).dS(r)ds\Big|=0
    ,~\forall t\in [0,T],~\forall \whg \in \mathcal{C}_{\wht}\Big]=1,
\end{split}
\end{equation}
where $I_{\whg}(\whr)$ was defined in \eqref{fonctionnelle}.
\end{prop}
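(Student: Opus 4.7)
By weak relative compactness (Proposition \ref{tight}), extract a subsequence of $(Q_N^{\wht})_{N\ge 1}$ converging weakly to $Q^{\wht}$ and rename it. The strategy is to start from the Dynkin martingale decomposition \eqref{Integral2} for $\whg\in\mathcal{C}_{\wht}=\mathcal{C}_{0,-}^{1,2}$, match each discrete term with the corresponding term in the weak formulation \eqref{weakD+R}, and conclude that the limiting trajectory satisfies the identity. Since the functional inside the absolute values in \eqref{caracdirichlet} will be seen to be upper semicontinuous (as a sum of a continuous piece on path space plus trace integrals, which are continuous on $L^2([0,T],\mathcal{H}^1(B))$ by Proposition \ref{Sobo}), and since it is enough to establish the identity on a countable dense subset of $\mathcal{C}_{\wht}\times[0,T]$, the proof reduces to showing, for every fixed $\whg\in \mathcal{C}_{0,-}^{1,2}$ and $t\in[0,T]$, that the discrete analogue of the quantity in \eqref{caracdirichlet} (expressed via $\widehat\pi^N$) tends to $0$ in $\p_{\mu_N}$-probability.

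\textbf{Interior terms.} By \eqref{proba1} together with the bound on the quadratic variation \eqref{Varquadr}, the martingale $\widehat{M}_t^N(\whg)$ vanishes in $L^2$, hence in probability. The three linear pieces $\langle\pi_{i,t}^N,G_{i,t}\rangle$, $\langle\pi_{i,0}^N,G_{i,0}\rangle$ and $\int_0^t\langle\pi_{i,s}^N,\partial_sG_{i,s}\rangle ds$ pass to the limit by the weak convergence of $Q_N^{\wht}$ to $Q^{\wht}$. For the bulk Laplacian term, replace $\Delta_NG_{i,s}(x/N)$ by $\Delta G_{i,s}(x/N)$ with uniform error $O(N^{-1})$ by smoothness of $G$, then use weak convergence to recover $D\int_0^t\langle\rho_{i,s},\Delta G_{i,s}\rangle ds$. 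The reaction contribution $\frac{1}{N^d}\sum_{x\in B_N}G_{i,s}(x/N)\tau_x f_i(\eta_s)$ is treated with Proposition \ref{Bulk} applied to $\phi=f_i$: since $\widetilde{f_i}(\widehat\alpha)=F_i(\widehat\alpha)$ by direct computation from \eqref{IntegralCPRS}, the limit is $\int_0^t\langle F_i(\whr_s),G_{i,s}\rangle ds$.

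\textbf{Boundary terms.} The left-hand reservoir term in \eqref{Integral2} carries the prefactor $N^{2-\ttl-d}$ but $G_{i,s}(x/N)=0$ for $x\in\Gamma_N^-$ since $\whg\in\mathcal{C}_{0,-}^{1,2}$, so it is identically zero. The right-hand reservoir term has prefactor $N^{2-\ttr}/N^d=1/N^{d-1}$ for $\ttr=1$, i.e.\ the correct surface-integral scaling; we apply Proposition \ref{lhsrep} to substitute $\eta_{i,s}(x)$ by the small-box average $\eta_{i,s}^{\varepsilon N}(x)$, then pass successively to $N\to\infty$ (weak convergence) and $\varepsilon\to 0$ (continuity of the trace operator on $\mathcal{H}^1(B)$), identifying the limit with $\int_0^t\int_{\Gamma^+}G_{i,s}(r)(\rho_i(s,r)-b_i(r))\,dS(r)ds$. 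The discrete gradient terms produced by the summation by parts of the Laplacian (third line of \eqref{Integral2}) are handled analogously: on $\Gamma_N^+$, Proposition \ref{lhsrep} plus the trace operator yields $D\int_0^t\int_{\Gamma^+}\rho_i(s,r)(\partial_{e_1}G_{i,s})(r)n_1(r)\,dS(r)ds$; on $\Gamma_N^-$, the Dirichlet replacement Proposition \ref{Repdirich} allows substituting $\eta_{i,s}(x)$ by $b_i(x/N)$, giving $D\int_0^t\int_{\Gamma^-}b_i(r)(\partial_{e_1}G_{i,s})(r)n_1(r)\,dS(r)ds$. Summing the three lines of \eqref{weakD+R} over $i$ reconstructs exactly the identity claimed in \eqref{caracdirichlet}.

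\textbf{Main obstacle.} The delicate step is the passage from the discrete small-box average $\eta_i^{\varepsilon N}(x)$, $x\in\Gamma_N^+$, to the trace $\operatorname{tr}(\rho_i)$ on $\Gamma^+$. The bound $\frac{1}{N^{d-1}}\sum_{x\in\Gamma_N^+}H(x/N)\eta_i^{\varepsilon N}(x)$ is first recognised, after $N\to\infty$, as a volume average of $\rho_i$ over a thin slab of width $\varepsilon$ adjacent to $\Gamma^+$, weighted by the lift of $H$; then one lets $\varepsilon\to 0$ and invokes the continuity of the trace on $\mathcal{H}^1(B)$ together with $\rho_i\in L^2([0,T],\mathcal{H}^1(B))$ (Proposition \ref{Sobo}) to obtain the surface integral $\int_{\Gamma^+}H(r)\operatorname{tr}(\rho_i)(s,r)\,dS(r)$. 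Interchanging the limits in $N$ and $\varepsilon$ and controlling the error terms in $\varepsilon$ uniformly in $N$ is what makes the energy estimate of Subsection 3.3 truly indispensable for this characterisation; the exactly analogous argument yields the $(N_e;R)$ case by replacing Proposition \ref{Repdirich} on $\Gamma_N^-$ by Proposition \ref{lhsrep} (applied on $\Gamma_N^-$) and taking $\whg \in \mathcal{C}^{1,2}$.
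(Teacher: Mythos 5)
Your proposal is correct and follows essentially the same route as the paper: Dynkin's formula, the bulk replacement lemma (Proposition \ref{Bulk}) with $\widetilde{f_i}=F_i$, the Dirichlet replacement (Proposition \ref{Repdirich}) on $\Gamma_N^-$ and the small-box replacement (Proposition \ref{lhsrep}) on $\Gamma_N^+$, rewriting the box averages as convolutions of $\pi_i^N$ with approximations of the identity so that the resulting functional is continuous on path space at fixed $\varepsilon$, passing $N\to\infty$ by weak convergence and then $\varepsilon\to 0$ via the a.s.\ convergence of $\rho_i*u_\varepsilon^{right}$ to $\tr(\rho_i)$ granted by Proposition \ref{Sobo}. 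The only imprecision is your closing remark about ``interchanging'' the limits in $N$ and $\varepsilon$: no interchange is needed, since the replacement lemmas are stated as $\limsup_{\varepsilon}\limsup_{N}$ and the limits are taken in that fixed order, exactly as you describe earlier in the argument.
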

\begin{proof}
    The fact that any limit point is concentrated on trajectories which are absolutely continuous with respect to the Lebesgue measure comes from Proposition \ref{Sobo}. Let $Q^{\wht}$ be a limit point of the sequence of probability measures $(Q_{N}^{\wht})_{N\geq 1}$. To prove \eqref{caracdirichlet},
it is enough to show that for any fixed $\delta >0$ and $\whg \in \mathcal{C}_{0,-}^{1,2}$,
\begin{equation*}
\begin{split}
    &Q^{\wht}\Big[\widehat{\pi},\underset{0\leq t\leq T}{\sup} \Big|I_{\whg}(\whr)(t)+ D\sum_{i=1}^3\int_{0}^t\Big[\int_{\Gamma^-}b_{i}(r)(\partial_{e_{1}}G_{i,s})(r)n_{1}(r).dS(r)\\
    &+ \int_{\Gamma^+}\rho_{i}(s,r)(\partial_{e_{1}}G_{i,s})(r)n_{1}(r).dS(r)\Big]ds\\
    &-\sum_{i=1}^3\int_{0}^t\int_{\Gamma^+}G_{i}(r)(b_{i}(r)-\rho_{i}(s,r))n_{1}(r).dS(r)ds\Big|>\delta\Big]=0.
\end{split}
\end{equation*}
Here, note that for $s\in [0,T]$ and $r\in \Gamma$, $\rho_{i}(s,r)$ stands for $\tr(\rho)(s,r)$ which is well defined since  
$\rho$ is in $L^2([0,T],\mathcal H^1(B))$. By the triangular inequality, it suffices to prove that for any $1\leq i \leq 3$,
\begin{equation}
    \begin{split}
        &Q^{\wht}\Big[\widehat{\pi},~\underset{0\leq t\leq T}{\sup} \Big| I_{G_i}(\rho_i)(t)+ D\int_{0}^t\Big[\int_{\Gamma^-}b_{i}(r)(\partial_{e_{1}}G_{i,s})(r)n_{1}(r).dS(r)\\
        &+ \int_{\Gamma^+}\rho_{i}(s,r)(\partial_{e_{1}}G_{i,s})(r)n_{1}(r).dS(r)\Big]ds-\int_{0}^t\int_{\Gamma^+}G_{i}(r)(b_{i}(r)\\
        &-\rho_{i}(s,r))n_{1}(r).dS(r)ds\Big|>\delta\Big]=0. \label{caracdirichlet1m}
    \end{split}
\end{equation}
As usual, we would like to approximate $\rho$ by a convolution of its associated empirical measure with an approximation of the identity. Indeed, that convolution product can then be written in terms of the mean value of the configuration in a microscopic box. This is straightforward in the bulk, however, for the boundary terms, we need to justify that such an approximation works (see \eqref{arg2}). Without loss of generality, let us deal with $i=1$. We turn to our martingales \eqref{Martingale} $M_{1,t}^N(\whg)$ and recall that we have proved that 
its quadratic variation vanishes as $N\uparrow \infty$. For $\varepsilon>0$, introduce the set
\[B_{N,\varepsilon}=\{-N(1-\varepsilon),\cdots,N(1-\varepsilon)\}\times \mathbb T_N^{d-1}.\]
We now use Proposition \ref{Bulk} to replace the local functions of $\e$ by functions of the particle density: 
\begin{equation}\label{Qnmar}
\begin{split}
  &M_{1,t}^N (\widehat{G}) =  \langle\pi_{1,t}^N,G_{1,t}\rangle - \langle\pi_{1,0}^N,G_{1,0}\rangle-\int_{0}^t\langle \pi_{1,s}^N,\partial_{s}G_{1,s}\rangle ds\\
   & - \int_{0}^t\frac{D}{N^d}\sum_{x\in B_{N}\setminus \Gamma_{N}}\Delta G_{1,s}\big( \frac{x}{N}\big)\e_{1,s}(x) ds\\
      &  +\frac{D}{N^{d-1}}\Big[\int_{0}^t\sum_{x\in \Gamma_{N}^-}b_1(x/N)\partial_{e_{1}}G_{1,s}\big( \frac{x}{N}\big) ds
       + \int_{0}^t\sum_{x\in \Gamma_{N}^-}\partial_{e_{1}}G_{1,s}\big( \frac{x}{N}\big)
       \big(\e_{1,s}(x)  - b_{1}(x/N)\big)ds\Big] \\
       &  -\frac{D}{N^{d-1}}\Big[\int_{0}^t\sum_{x\in \Gamma_{N}^+}\partial_{e_{1}}G_{1,s}\big( \frac{x}{N}\big)\e_{1,s}^{\varepsilon N}(x)ds
     - \int_{0}^t\sum_{x\in \Gamma_{N}^+}\partial_{e_{1}}G_{1,s}\big( \frac{x}{N}\big)\big(\e_{1,s}^{\varepsilon N}(x)-\e_{1,s}(x) \big)ds\Big]\\
       &  + \int_{0}^t\frac{D}{N^{d-1}}\sum_{x\in \Gamma_{N}^+}G_{1,s}(x/N)\big(\e_{1,s}^{\varepsilon N}(x)-
     b_1(x/N) \big)ds\\
       & - \int_{0}^t\frac{1}{N^d}\sum_{x\in B_{N,\varepsilon}}G_{1,s}\Big(\frac{x}{N}\Big)
     \Big\{2d(\lambda_1\e_{1,s}^{\varepsilon N}(x)+\lambda_2\e_{3,s}^{\varepsilon N}(x))\e_{0,s}^{\varepsilon N}(x)+\e_{3,s}^{\varepsilon N}(x)-(r+1)\e_{1,s}^{\varepsilon N}(x) \Big\}ds\\
      & +R\big(N,\varepsilon,G_1,(\eta_t)_{t\in[0,T]}\big)\, ,
   \end{split}
\end{equation}
where $R\big(N,\varepsilon,G_1,(\eta_t)_{t\in[0,T]}\big)$ is a random variable satisfying
\begin{equation*}
    \lim_{\varepsilon \to 0}\lim_{N \to \infty}
{\mathbb E}_{\mu^N} \Big[R\big(N,\varepsilon,G_1,(\eta_t)_{t\in[0,T]}\big)\Big]=0 .
\end{equation*}
From Proposition \ref{Repdirich} and Proposition \ref{lhsrep}, the martingale $M_{1,t}^N(\whg)$ can be rewritten as
\begin{equation}\label{Qnmar2}
\begin{split}
  &  M_{1,t}^N (\widehat{G}) =  \langle\pi_{1,t}^N,G_{1,t}\rangle - \langle\pi_{1,0}^N,G_{1,0}\rangle-\int_{0}^t\langle\pi_{1,s}^N,\partial_{s}G_{1,s}\rangle ds\\
   &- \int_{0}^t\frac{D}{N^d}\sum_{x\in B_{N}\setminus \Gamma_{N}}\Delta G_{1,s}\Big(\frac{x}{N}\Big)\e_{1,s}(x) ds
       +\int_{0}^t\frac{D}{N^{d-1}}\sum_{x\in \Gamma_{N}^-}b_1(x/N)\partial_{e_{1}}G_{1,s}\Big(\frac{x}{N}\Big) ds \\
       & -\int_{0}^t\frac{D}{N^{d-1}}\sum_{x\in \Gamma_{N}^+}\partial_{e_{1}}G_{1,s}\Big(\frac{x}{N}\Big)\e_{1,s}^{\varepsilon N}(x)ds
     + \int_{0}^t\frac{D}{N^{d-1}}\sum_{x\in \Gamma_{N}^+}G_{1,s}\Big(\frac{x}{N}\Big)\big(\e_{1,s}^{\varepsilon N}(x)-
     b_1(x/N) \big)ds\\
       & - \int_{0}^t\frac{1}{N^d}\sum_{x\in B_{N,\varepsilon}}G_{1,s}\Big(\frac{x}{N}\Big)
     \Big(2d(\lambda_1\e_1^{\varepsilon N}(x)+\lambda_2\e_3^{\varepsilon N}(x))\e_0^{\varepsilon N}(x)+\e_3^{\varepsilon N}(x)-(r+1)\e_1^{\varepsilon N}(x) \Big)ds\\
      & +R'\big(N,\varepsilon,G_1,(\eta_t)_{t\in[0,T]}\big)\, ,
   \end{split}
\end{equation}
where $R'\big(N,\varepsilon,G_1,(\eta_t)_{t\in[0,T]}\big)$ is a random variable satisfying
$$\lim_{\varepsilon \to 0} \lim_{N \to \infty}
{\mathbb E}_{\mu^N}\Big[R'\big(N,\varepsilon,G_1,(\eta_t)_{t\in[0,T]}\big)\Big]=0. $$
On the other hand, by \eqref{proba1} recall that 
\begin{equation*}
    \limsup_{N \rightarrow \infty}{\mathbb P}_{\mu^N} \Big[\underset{0\leq t\leq T}{\sup}~ \Big| M_{1,t}^N(\widehat{G})\Big|>\delta \Big]=0.
\end{equation*}
Now, introduce the following approximations of the identity on $B$:
\begin{equation} \label{uepsilon} 
 u_{\varepsilon}(x) =
    \frac{1}{(2\varepsilon)^{d}}\mathds{1}_{[-\varepsilon,\varepsilon]^d}(x), 
\end{equation}
\begin{equation}
   u_{\varepsilon}^{right}(x)=
    \frac{1}{\varepsilon(2\varepsilon)^{d-1}}\mathds{1}_{[0,\varepsilon]\times[-\varepsilon,\varepsilon]^{d-1}}
    (x), ~ ~~~\text{and}~ ~  ~u_{\varepsilon}^{left}(x)=
    \frac{1}{\varepsilon(2\varepsilon)^{d-1}}\mathds{1}_{[-\varepsilon,0]\times[-\varepsilon,\varepsilon]^{d-1}}
    (x).
\end{equation}
Note that for $\varepsilon>0$, $1\leq i \leq 3$, $x\in B_{N,\varepsilon}$, $y\in \Gamma_N^+$, and $z\in \Gamma_N^-$,

\begin{equation}\label{empir}
    \e^{\varepsilon N}_{i}(x)\, =\, \frac{(2\varepsilon N)^d}{(2\varepsilon N+1)^d}\big(\pi_i^N\ast u_{\varepsilon}\big)(x/N),  
\end{equation}
\begin{equation}\label{empirbord}
    \e^{\varepsilon N}_{i}(y)\, =\, \frac{(2\varepsilon N)^{d-1}}{(2\varepsilon N+1)^{d-1}}\big(\pi_i^N\ast u^{right}_{\varepsilon}\big)\Big(\frac{y}{N}\Big),~ ~ \text{and}~ ~ \e^{\varepsilon N}_{i}(z)\, =\, \frac{(2\varepsilon N)^{d-1}}{(2\varepsilon N+1)^{d-1}}\big(\pi_i^N\ast u^{left}_{\varepsilon}\big)\Big(\frac{z}{N}\Big).
\end{equation}
Here we will only make use of \eqref{empir} and the first relation in \eqref{empirbord} since we need to replace elements in the bulk and the right-hand side boundary of the system to recover the weak formulation of the equation in the (Dirichlet; Robin) regime. For regimes where a replacement is needed on the left-hand side boundary, we use the second relation in \eqref{empirbord} in the same way. 

We may thus replace in \eqref{Qnmar2} and \eqref{proba1}, $\e^{\varepsilon N}_{i}$ by $\pi_i^N\ast u_{\varepsilon}$ in the bulk and
$\e^{\varepsilon N}_{i}$ by $\pi_i^N\ast u_{\varepsilon}^{right}$ at the right boundary.
Therefore, for any $\delta>0$.
\[
\limsup_{\varepsilon \rightarrow 0} \limsup_{N \rightarrow \infty}Q_N^{\wht}\Big[ 
\underset{0\leq t\leq T}{\sup}~\Big| {\mathcal F}_{1,N,\epsilon}^{\widehat{G},t}\big( \widehat{\pi}\big)\Big| \ge \delta\Big]=0,
\]
where for any trajectory $\widehat{\pi}$ and for any $t\in[0,T]$,
\begin{equation}\label{Qnmar3}
\begin{split}
& {\mathcal F}_{1,N,\epsilon}^{\widehat{G},t}\big( \widehat{\pi}\big)=
\langle\pi_{1,t},G_{1,t}\rangle - \langle\pi_{1,0},G_{1,0}\rangle-\int_{0}^t\langle\pi_{1,s},\partial_{s}G_{1,s}\rangle ds\\
   &\quad - \int_{0}^tD \big\langle\pi_{1,s},\Delta G_{1,s}\big\rangle ds
       +\int_{0}^t\frac{D}{N^{d-1}}\sum_{x\in \Gamma_{N}^-}b_1(x/N)\partial_{e_{1}}G_{1,s}(x/N) ds \\
       & \quad  -\int_{0}^t\frac{D}{N^{d-1}}\sum_{x\in \Gamma_{N}^+}\partial_{e_{1}}G_{1,s}(x/N)
       \big(\pi_{1,s} *u_\varepsilon^{right}\big) (x)ds\\
   & \quad + \int_{0}^t\frac{D}{N^{d-1}}\sum_{x\in \Gamma_{N}^+}G_{1,s}(x/N)\big(
   \big(\pi_{1,s} *u_\varepsilon^{right}\big) (x)-
     b_1(x/N) \big)ds\\
       & \quad  - \int_{0}^t\frac{1}{N^d}\sum_{x\in B_{N,\varepsilon}}
       G_{1,s}(x/N), F_1\Big(\pi_{1,s} *u_\varepsilon (x/N),\pi_{2,s} *u_\varepsilon(x/N),
       \pi_{3,s} *u_\varepsilon(x/N)\Big)
     ds\, ,
\end{split}
\end{equation}
where functions $F_i$, $i=1,2,3$ are defined in \eqref{F}.
By approximating Lebesgue integrals by Riemann sums, on the bulk and at the boundary, we obtain
\[
\limsup_{\varepsilon \rightarrow 0} \limsup_{N \rightarrow \infty}Q_N^{\wht}\Big[ ~ 
\underset{0\leq t\leq T}{\sup}~\Big| {\mathcal F}_{1,\epsilon}^{\widehat{G},t}\big( \widehat{\pi}\big)\Big| \ge \delta\Big]=0\, ,
\]
where for any trajectory $\widehat{\pi}$ and for any $t\in[0,T]$,
\begin{equation}
\begin{split}
& {\mathcal F}_{1,\epsilon}^{\widehat{G},t}\big( \widehat{\pi}\big)=
\langle \pi_{1,t},G_{1,t}\rangle - \langle\pi_{1,0},G_{1,0}\rangle-\int_{0}^t\langle\pi_{1,s},\partial_{s}G_{1,s}\rangle ds\\
   &- D\int_{0}^t  \big<\pi_{1,s},\Delta G_{1,s}\big> ds
       +D\int_{0}^t\int_{\Gamma^-}  b_1(r)\partial_{e_{1}}G_{1,s}(r) \, dr ds \\
       & -D\int_{0}^t \int_{\Gamma^+}\partial_{e_{1}}G_{1,s}(r)
       \big(\pi_{1,s} *u_\varepsilon^{right}\big) (r) \, drds\\
       &+ \int_{0}^t \int_{\Gamma^+}G_{1,s}(r)\big(
   \big(\pi_{1,s} *u_\varepsilon^{right}\big) (r)-
     b_1(r) \big)\, drds\\
       &  - \int_{0}^t\int_{B_{\varepsilon}}
       G_{1,s}(r), F_1\Big(\pi_{1,s} *u_\varepsilon(r),\pi_{2,s} *u_\varepsilon(r),
       \pi_{3,s} *u_\varepsilon(r)\Big)\, dr ds\, ,
\end{split}
\end{equation}
with $B_{\varepsilon}=[-1+\varepsilon,1+\varepsilon]\times{\mathbb T}^{d-1}$.
By the continuity of the function 
$\widehat{\pi} \to {\mathcal F}_{1,\epsilon}^{\widehat{G},t}\big( \widehat{\pi}\big)$, for each $\varepsilon >0$,
we get for any limit point $Q^{\wht}$  of the sequence of probability measures $(Q_{N}^{\wht})_{N\geq 1}$,
\begin{equation}\label{arg0}
   \limsup_{\varepsilon \rightarrow 0} Q^{\wht}\Big[ 
\underset{0\leq t\leq T}{\sup}~\Big| {\mathcal F}_{1,\epsilon}^{\widehat{G},t}\big( \widehat{\pi}\big)\Big| \ge \delta\Big]=0\, . 
\end{equation}
To conclude the proof, it remains to prove that we may replace the convolutions appearing in the functional
${\mathcal F}_{1,\epsilon}^{\widehat{G},t}$ by the associated density of the trajectory. By Proposition \ref{Sobo}, $Q^{\wht}$ is concentrated on paths $(\widehat{\pi}(t,dr))_{t\in[0,T]}=(\widehat{\rho}(t,r)dr)_{t\in[0,T]}$ which are absolutely continuous with respect to the Lebesgue measure and such that for every $1\leq i \leq 3$, $\rho_{i}$ belongs to $L^2([0,T],\mathcal{H}^1(B))$. For the replacement of the convolution with the density in the bulk, since $u_\varepsilon$ is an approximation of the identity in $L^1(B)$ and the functions $F_i$ are Lipschitz, the random variables 
\[
\int_{0}^t\int_{B_{\varepsilon}}
       G_{1,s}(r) F_1\Big(\pi_{1,s} *u_\varepsilon(r),\pi_{2,s} *u_\varepsilon(r),
       \pi_{3,s} *u_\varepsilon (r)\Big)\, dr ds
\]
converge $Q^{\wht}$ almost surely to 
\begin{equation}\label{arg1}
    \int_{0}^t\int_{B}
       G_{1,s}(r) F_1\Big(\rho_{1,s} (r),\rho_{2,s}(r),
       \rho_{3,s}(r)\Big)\, dr ds\, .
\end{equation}
For the replacement of the convolution at the boundary we use the following result which follows from \cite[Section 5.3]{Fine}: for any $H\in {\mathcal H}^1(B)$ 
\begin{equation}\label{arg2}
 \lim_{\varepsilon\to 0}  H * u_\varepsilon^{right}
= \tr (H)~~\text{a.s in}~ ~ \Gamma^+.
\end{equation}
For the other terms in ${\mathcal F}_{1,\epsilon}^{\widehat{G},t}$, by the dominated convergence Theorem, for almost every trajectory $(\widehat{\pi}(t,dr))_{t\in[0,T]}=(\widehat{\rho}(t,r)dr)_{t\in[0,T]}$ with $\rho_{1}\in L^2([0,T],\mathcal{H}^1(B))$,
\begin{equation}\label{arg3}
 \begin{split}
  &\lim_{\varepsilon\to 0}~ 
  D\int_{0}^t \int_{\Gamma^+}\partial_{e_{1}}G_{1,s}(r)
       \big(\pi_{1,s} *u_\varepsilon^r\big) (r) \, drds
   \, -\, \int_{0}^t \int_{\Gamma^+}G_{1,s}(r)
   \Big(\pi_{1,s} *u_\varepsilon^r (r) \,-\, b_1(r) \Big)\, drds \\
&=\, D\int_{0}^t \int_{\Gamma^+}\partial_{e_{1}}G_{1,s}(r)
       \tr(\rho_{1,s})(r) \, drds
   \, -\, \int_{0}^t \int_{\Gamma^+}G_{1,s}(r)\Big(
   \tr(\rho_{1,s})(r) \, -\, b_1(r) \Big) drds.
    \end{split}
\end{equation}
Collecting \eqref{arg0}, \eqref{arg1}, \eqref{arg2} and \eqref{arg3}, we obtain \eqref{caracdirichlet1m} and conclude the proof.
\end{proof}

\subsection{Uniqueness of the limit points}
In order to finish the proof of the hydrodynamic limit specific to each regime we are left to show that each boundary valued problem \eqref{D+R} and \eqref{N+R} with fixed initial data admits a unique solution. For that, we use the standard method which consists in decomposing the difference of two solutions on the orthonormal basis of a well chosen eigenvectors of the Laplacian. The choice of the family of eigenvectors is not necessarily intuitive and depends on the boundary conditions of the mixed regime considered. We thus give details for both the (Neumann; Robin) and (Dirichlet; Robin) mixed regimes, for which the family of eigenvectors are different. As we are working in dimension $d\geq 1$, we will need to control integral terms on the boundary. Therefore, we will make use of the following result regarding the continuity of the trace operator. We refer to \cite[Part II Chapter 5]{evans_partial_2010} for a detailed survey of the trace operator.
\begin{thm}[Trace Theorem, see \cite{evans_partial_2010}\label{Trace}]
Fix $1\leq p<\infty$ and $\Omega$ an open bounded subspace of $\R^d$ with smooth boundary $\partial \Omega$. There is a constant $C_{tr}>0$ depending only on $\Omega$ and $p$ such that for any $\varphi\in \mathcal{C}^{\infty}(\overline{\Omega})$,
\[\|\varphi\|_{L^p(\partial \Omega)} \leq C_{tr}\|\varphi\|_{W^{1,p}}, \]
where $\|.\|_{L^p(\partial \Omega)} $ denotes the $L^p$ norm on $\partial \Omega$ and $\|.\|_{W^{1,p}} $ the Sobolev norm on $\Omega$ given by
\[\|\varphi\|_{W^{1,p}} = \Big(\|\varphi\|_{L^p(\Omega)}^p+\|\nabla\varphi\|_{L^p(\Omega)}^p\Big)^{1/p}, \]
where
\[\|\nabla\varphi\|_{L^p(\Omega)}^p = \sum_{i=1}^d \|\partial_{e_i}\varphi\|_{L^p(\Omega)}^p. \]
\end{thm}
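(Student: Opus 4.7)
The plan is to prove the trace inequality by the standard localize-and-flatten strategy from the theory of Sobolev spaces, reducing the general situation to the half-space case where an explicit one-variable estimate is available.

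First I would cover the compact boundary $\partial\Omega$ by finitely many open sets $U_1,\dots,U_m$ such that for each $i$, after a smooth change of variables $\Psi_i$, the set $U_i\cap\Omega$ becomes a half-ball $B^+=\{y\in\R^d:|y|<1,\, y_d>0\}$ and $U_i\cap\partial\Omega$ becomes the flat piece $\{y_d=0,\,|y'|<1\}$. The smoothness of $\partial\Omega$ ensures each $\Psi_i$ and $\Psi_i^{-1}$ is bounded in $C^1$, so pullback/pushforward distorts the $L^p$ and $W^{1,p}$ norms by at most a multiplicative constant depending only on $\Omega$ and $p$. I would then take a smooth partition of unity $\{\zeta_i\}$ subordinate to $\{U_i\}$ augmented by an interior piece (whose contribution to the trace is zero), and reduce the problem to estimating $\|\zeta_i\varphi\|_{L^p(\partial\Omega\cap U_i)}$ in terms of $\|\zeta_i\varphi\|_{W^{1,p}(U_i\cap\Omega)}$.

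The core computation is the half-space estimate. Given $\psi\in C^1(\overline{B^+})$ vanishing near the curved part of $\partial B^+$, the fundamental theorem of calculus gives, for each $y'\in\R^{d-1}$ with $|y'|<1$,
\begin{equation*}
|\psi(y',0)|^p \;=\; -\int_0^\infty \partial_{y_d}\bigl(|\psi(y',y_d)|^p\bigr)\,dy_d \;\leq\; p\int_0^\infty |\psi(y',y_d)|^{p-1}\bigl|\partial_{y_d}\psi(y',y_d)\bigr|\,dy_d.
\end{equation*}
Integrating over $y'$, applying Hölder's inequality with exponents $p/(p-1)$ and $p$, and then Young's inequality $ab\leq a^{p/(p-1)}(p-1)/p + b^p/p$ yields
\begin{equation*}
\int_{\R^{d-1}}|\psi(y',0)|^p\,dy' \;\leq\; (p-1)\|\psi\|_{L^p(B^+)}^p \;+\; \|\partial_{y_d}\psi\|_{L^p(B^+)}^p \;\leq\; C\,\|\psi\|_{W^{1,p}(B^+)}^p.
\end{equation*}

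Finally I would assemble the pieces: applying the half-space estimate to each $\psi_i := (\zeta_i\varphi)\circ\Psi_i^{-1}$, undoing the change of variables, and summing over $i$ gives $\|\varphi\|_{L^p(\partial\Omega)}^p\leq C_{tr}^p\|\varphi\|_{W^{1,p}(\Omega)}^p$ with a constant depending only on $\Omega$ and $p$. The main subtlety (rather than a true obstacle, since this is classical) is checking that the constants from the partition of unity and from the $C^1$ diffeomorphisms $\Psi_i$ are controlled uniformly in terms of the geometry of $\Omega$; the density of $C^\infty(\overline\Omega)$ in $W^{1,p}(\Omega)$ then allows the inequality, stated here for smooth $\varphi$, to extend by continuity and to define the trace operator on all of $W^{1,p}(\Omega)$.
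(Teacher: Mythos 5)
Your proof is correct and is precisely the classical localize--flatten--and--integrate argument from Evans, which is the reference the paper cites for this statement; the paper itself gives no proof, so there is nothing to diverge from. The only cosmetic point is that for $p=1$ the H\"older/Young step degenerates, but there the half-space estimate $\int_{\R^{d-1}}|\psi(y',0)|\,dy'\leq \|\partial_{y_d}\psi\|_{L^1(B^+)}$ follows directly from the fundamental theorem of calculus.
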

\begin{rem}
    For $p=2$ and $\Omega=B$,
    \begin{equation}\label{Trace2}
        \|\varphi\|_{L^2(\partial\Omega)}^2 \leq \|\varphi\|_{L^2(\Omega)}^2+ \|\nabla \varphi\|_{L^2(\Omega)}^2 
    \end{equation}
    In particular, $C_{tr}=1$.
\end{rem} In the sequel we only make use of \eqref{Trace2} but we stated Theorem \ref{Trace} for the sake of completeness.
\subsubsection{Uniqueness of the solution in the (Neumann ; Robin) mixed regime}

Here, we choose a basis of eigenvectors satisfying Neumann conditions on both boundaries \eqref{eigenprobneum}.
\begin{thm}\label{Uniqueness1}
There exists a unique solution to the Neumann + Robin boundary problem \eqref{N+R}.
\end{thm}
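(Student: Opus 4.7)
The plan is to take two weak solutions $\whr^1$ and $\whr^2$ of \eqref{N+R} sharing the same initial datum $\widehat{\gamma}$, set $R_i(t,\cdot):= \rho_i^1(t,\cdot) - \rho_i^2(t,\cdot)$ for $i\in\{1,2,3\}$, and derive a Gronwall-type inequality on the $L^2$ energy $E(t):=\sum_{i=1}^{3}\|R_i(t,\cdot)\|_2^2$. Subtracting the two versions of the weak formulation \eqref{WeakN+R} cancels both the initial datum and the boundary data $\widehat{b}$, so $R_i$ satisfies the same identity but with homogeneous data.

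First I would fix an $L^2(B)$-orthonormal basis $(\psi_n)_{n\ge 0}$ of eigenfunctions of $-\Delta$ with homogeneous Neumann boundary conditions on $\Gamma$ (those coming from the eigenvalue problem \eqref{eigenprobneum}), so that $-\Delta\psi_n = \mu_n\psi_n$ and $\partial_{e_1}\psi_n|_{\Gamma}=0$, with $0=\mu_0\le\mu_1\le\cdots$. Since in the Neumann--Robin regime $\mathcal{C}_{\wht}=\mathcal{C}^{1,2}(\overline{B})$, each $\psi_n$ is an admissible time-independent test function. Plugging $\whg$ equal to $\psi_n$ in its $i$-th coordinate and zero elsewhere into \eqref{WeakN+R}, the two integrals involving $\partial_{e_1}G_{i,s}$ over $\Gamma^\pm$ vanish by the Neumann condition on $\psi_n$, we have $\Delta\psi_n=-\mu_n\psi_n$, and taking the difference between the two solutions gives, for $c_n^i(t):=\langle R_i(t,\cdot),\psi_n\rangle$, the integral equation
\[
c_n^i(t) + D\mu_n\int_0^t c_n^i(s)\,ds + \int_0^t\!\!\int_{\Gamma^+}\!\psi_n\,\tr(R_i(s,\cdot))\,dS\,ds \;=\; \int_0^t\langle F_i(\whr^1_s)-F_i(\whr^2_s),\psi_n\rangle\,ds,
\]
with $c_n^i(0)=0$. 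The boundary integral is well-defined by Theorem \ref{Trace}, since $R_i(s,\cdot)\in\mathcal{H}^1(B)$ for a.e.\ $s$.

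This identity shows that $c_n^i$ is absolutely continuous in $t$; differentiating, multiplying by $c_n^i(t)$, and summing over $n\ge 0$ and $i\in\{1,2,3\}$, Parseval's identity gives $\sum_n \mu_n(c_n^i(t))^2=\|\nabla R_i(t,\cdot)\|_2^2$, while the continuity of the trace operator lets us identify $\sum_n c_n^i(t)\int_{\Gamma^+}\psi_n\tr(R_i(t,\cdot))\,dS$ with $\|\tr(R_i(t,\cdot))\|_{L^2(\Gamma^+)}^2$. This yields
\[
\tfrac{1}{2}\tfrac{d}{dt}E(t) = -D\sum_{i=1}^{3}\|\nabla R_i(t,\cdot)\|_2^2 - \sum_{i=1}^{3}\|\tr(R_i(t,\cdot))\|_{L^2(\Gamma^+)}^2 + \sum_{i=1}^{3}\langle R_i(t,\cdot),F_i(\whr^1_t)-F_i(\whr^2_t)\rangle.
\]
The first two terms on the right-hand side have the favourable sign. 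Since the $F_i$ in \eqref{F} are polynomial and the weak solutions are bounded, the $F_i$ are Lipschitz on the range of $\whr^1,\whr^2$, and Cauchy--Schwarz bounds the last sum by $C_L E(t)$ for some $C_L>0$. Hence $E'(t)\le 2C_L E(t)$ with $E(0)=0$, and Gronwall's lemma forces $E\equiv 0$, i.e.\ $\whr^1=\whr^2$.

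The main obstacle is the functional-analytic justification of the manipulations in the middle step: one must show that each $c_n^i$ is absolutely continuous in time with the expected derivative, and then justify the termwise differentiation-and-summation leading to the energy identity. This relies on the summability $\sum_n(1+\mu_n)(c_n^i(t))^2 = \|R_i(t,\cdot)\|_{\mathcal{H}^1(B)}^2<\infty$ for a.e.\ $t$ together with an integrable $L^2([0,T])$ majorant, and on the trace inequality \eqref{Trace2} to control the boundary contributions uniformly in $n$; once these are settled, the remainder is standard.
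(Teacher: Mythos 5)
Your proposal is correct and follows essentially the same route as the paper: expansion of the difference of two solutions on the Neumann eigenbasis of \eqref{eigenprobneum}, a differential inequality for the truncated $L^2$ energy, the Lipschitz bound on the $F_i$, and Gr\"onwall. The only (minor) divergence is in the Robin boundary term: you identify it in the limit as the nonpositive quantity $-\sum_i\|\tr(R_i)\|_{L^2(\Gamma^+)}^2$ and discard it, whereas the paper bounds the cross term via Young's inequality and the trace inequality \eqref{Trace2} and absorbs it into the gradient term by choosing $A>1/D$; both are valid, and yours slightly streamlines that step at the cost of justifying the $\mathcal{H}^1$-convergence of the eigenfunction expansion needed to pass the trace through the series, which you correctly flag.
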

\begin{proof}
By Liouville's Theorem stated for instance in \cite{evans_partial_2010}, there is a countable system $\{V_n,~ \alpha_n,~ n\geq 1\}$ of eingensolutions for the problem
\begin{equation}\label{eigenprobneum}
\left\{
    \begin{array}{ll}
         -\Delta \phi = \alpha \phi  \\
          \partial_{e_1}\phi_{|\Gamma}=0
    \end{array}
    \right.
\end{equation}
in $\mathcal{H}^1(B)$ and containing all possible eigenvalues. The set $\{V_n,~ n\geq 1\}$ forms a complete, orthonormal system in the Hilbert space $L^2(B)$ and the eigenvalues
\begin{equation} \label{ordrevp}
    0\leq \alpha_1<\alpha_2<\cdots<\alpha_n \underset{n\rightarrow \infty}{\longrightarrow} \infty
\end{equation}
have finite multiplicity. Note that for any $U,W\in \mathcal{H}^1(B)$, 
\begin{align}
    \langle U,W\rangle_2=\underset{n\rightarrow \infty}{\lim}\sum_{k=1}^n\langle U,V_k\rangle\langle W,V_k\rangle,\label{Norme2}\\
     \langle\nabla U, \nabla W\rangle_2=\underset{n\rightarrow \infty}{\lim}\sum_{k=1}^n\alpha_k \langle U,V_k\rangle \langle W,V_k\rangle.\label{Norme3}
\end{align}
One can check that since we are working on $(-1,1)\times \mathbb{T}^{d-1}$, for $k=(k_1,\cdots,k_d)\in \mathbb{N}\times(\mathbb{N}\setminus\{0\})^{d-1} $,
\[V_k(x_1,\cdots x_d) =2^{\frac{d-1}{2}}\cos\Big(\frac{k_1\pi x_1}{2}+ \frac{\pi}{2}\Big)\prod_{i=2}^{d}\sin(k_i\pi x_i)~ ~ ~ \text{and}~ ~ ~ ~ ~~\alpha_k = \frac{(k_1\pi)^2}{4}+\sum_{i=2}^d k_i^2\pi^2.\]
Furthermore, define
\begin{equation}\label{check}
    \Check{V}_k(x_2,\cdots,x_d)=2^{\frac{d-1}{2}}\prod_{i=2}^{d}\sin(k_i\pi x_i)~ ~ ~ \text{and}~ ~ ~ ~ ~~\Check{\alpha}_k = \sum_{i=2}^d k_i^2\pi^2.
\end{equation}
We have
\begin{equation}
          \|U\|_{L^2(\Gamma^+)}^2=\underset{n\rightarrow \infty}{\lim}\sum_{k=1}^n\Big(\int_{\Gamma^+}U(r)\Check{V}_k(r)n_1(r)dS(r) \Big)^2.\label{Norme4}
\end{equation}
Note that by abuse of notations we indexed the family $V_k$ by $\mathbb{N}\setminus\{0\}$ instead of $\mathbb{N}\times(\mathbb{N}\setminus\{0\})^{d-1}$ but this is not a problem because we can give an order to elements of $\mathbb{N}\times(\mathbb{N}\setminus\{0\})^{d-1}$.

Consider $\whr^1$ and $\whr^2$ two solutions of \eqref{N+R} associated to the same initial profile and for $n \in \N$ and $t>0$, introduce
\begin{equation}\label{Decompose1}
    G_n(t)= \sum_{i=1}^3\sum_{k=1}^n|\langle\rho_i^1-\rho_i^2,V_k\rangle|^2.
\end{equation}
Let us show that $\underset{n\rightarrow \infty}{\lim}G_n(t)=\|\whr^1-\whr^2\|_2^2=:G(t)=0$. For that, apply the weak formulation \eqref{N+R} with $V_k$: for any $1\leq i \leq 3$
\begin{equation}
\begin{split}
        \langle(\rho_i^1-\rho_i^2)(t,.),V_k\rangle
        &= -D\alpha_k \int_0^t\langle(\rho_i^1-\rho_i^2)(s,.),V_k\rangle ds + \int_0^t\langle(F_i(\whr^1)-F_i(\whr^2))(s,.),V_k\rangle ds\\
        &-\int_{0}^t\int_{\Gamma^+}(\rho_i^1-\rho_i^2)(s,r)V_k(r)n_1(r).dS(r)ds.
\end{split}
\end{equation}
Therefore $\langle\rho_{i}^1(t,.)-\rho_{i}^2(t,.),V_{k}\rangle  $ is time differentiable with derivative:
\begin{equation}
\begin{split}
  \partial_{t}\langle \rho_{i}^1(t,.)-\rho_{i}^2(t,.),V_{k}\rangle
  &= - D\alpha_{k}\langle\rho_{i}^1(t,.)-\rho_{i}^2(t,.), V_{k}\rangle + \langle F_{i}(\whr^1(t,.))-F_{i}(\whr^2(t,.)),V_{k}\rangle\\
  &-\int_{\Gamma^+}(\rho_i^1-\rho_i^2)(t,r)V_k(r)n_1(r).dS(r)
\end{split}
\end{equation}
and so is $G_n$, with
\begin{equation}\label{H'n}
    \begin{split}
        G'_{n}(t) &= -2D\sum_{i=1}^3\sum_{k=1}^n\alpha_{k}\big|\langle\rho_{i,t}^1-\rho_{i,t}^2,V_{k}\rangle\big|^2 + 2\sum_{i=1}^3\sum_{k=1}^n\langle F_{i}(\whr_t^1)-F_{i}(\whr_t^2),V_{k}\rangle \langle \rho_{i,t}^1-\rho_{i,t}^2,V_{k}\rangle\\
        & - 2\sum_{i=1}^3\sum_{k=1}^n\int_{\Gamma^+}(\rho_i^1-\rho_i^2)(t,r)\Check{V}_k(r)n_1(r).dS(r)\langle\rho_{i,t}^1-\rho_{i,t}^2,V_{k}\rangle\\
        &\leq -2D\sum_{i=1}^3\sum_{k=1}^n\alpha_{k}\big|\langle\rho_{i,t}^1-\rho_{i,t}^2,V_{k}\rangle\big|^2 + \sum_{i=1}^3\sum_{k=1}^n\langle F_{i}(\whr_t^1)-F_{i}(\whr_t^2),V_{k}\rangle^2+ G_{n}(t)\\
        &+ \frac{1}{A}\sum_{i=1}^3\sum_{k=1}^n\Big(\int_{\Gamma^+}(\rho_i^1-\rho_i^2)(t,r)\Check{V}_k(r)n_1(r).dS(r)\Big)^2 + A G_n(t),
    \end{split}
\end{equation}
for any $A>0$, where we used both the Cauchy-Schwarz and \eqref{useful2} inequalities in the last line. By \eqref{Norme2}, \eqref{Norme3} and \eqref{Norme4}, the right-hand side of \eqref{H'n} converges to
\begin{equation}\label{leftH'n}
    -2D\|\nabla(\whr^1-\whr^2)\|_2^2+ \sum_{i=1}^3\|F_{i}(\whr^1)-F_{i}(\whr^2) \|_{2}^2+(1+A)\|\whr^1-\whr^2\|_2^2+ \frac{1}{A}\|\whr^1-\whr^2\|_{L^2(\Gamma^+)}^2.
\end{equation}
By the trace inequality \eqref{Trace2},
\begin{equation}\label{Ttrace}
    \|\whr^1-\whr^2\|_{L^2(\Gamma)}^2\leq \|\whr^1-\whr^2\|_{2}^2 + \|\nabla(\whr^1-\whr^2)\|_2^2.
\end{equation}
Furthermore, using that $\whr^1$ and $\whr^2$ take their values in $[0,1]^3$, there is a constant $C:=C(\lambda_{1},\lambda_{2},r,d)>0$ such that for any $\whr^{a},\whr^{b} \in[0,1]^3$ and $1\leq i \leq 3$,
\begin{equation*}
    \big|F_{i}(\whr^{a})-F_{i}(\whr^b) \big|\leq C\sum_{j=1}^3|\rho_{j}^{a}-\rho_{j}^{b}|.
\end{equation*}
Then, by Cauchy-Schwarz's inequality, there is a constant $C'>0$ such that for any $1\leq i \leq 3$,
\begin{equation}\label{lipsch2}
    \|F_{i}(\whr^{a})-F_{i}(\whr^b)\|_{2}^2\leq C'\sum_{j=1}^3\|\rho_{i}^{a}-\rho_{i}^{b}\|_{2}^2 .
\end{equation}
Putting together \eqref{leftH'n}, \eqref{Ttrace}, \eqref{lipsch2}, taking $A>\frac{1}{D}$ and applying the dominated convergence theorem, we are left with
\begin{equation}
    G'(t) \leq (C'+2+A) G(t).
\end{equation}
Grönwall's inequality and the fact that $G(0)=0$ yields $G(t)=0$ at any time.
\end{proof}

\subsubsection{Uniqueness of the solution in the (Dirichlet ; Robin) mixed regime}
Here, we choose a basis of eigenvectors satisfying a Dirichlet boundary condition on the left and Neumann boundary condition on the right \eqref{eigenprobD+N}.
\begin{thm}\label{TD}
There exists a unique solution to the Dirichlet + Robin boundary problem \eqref{D+R}.
\end{thm}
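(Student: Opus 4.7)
The statement has two parts, existence and uniqueness, and my plan is to dispatch them separately.

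\textbf{Existence.} I would extract a solution directly from the hydrodynamic machinery already set up in the previous subsections, rather than give a PDE construction from scratch. Fix any continuous profile $\widehat{\gamma}:\overline{B}\to[0,1]^3$ satisfying \eqref{a)} and \eqref{b)} and set $\mu_N := \nualph$ with $\widehat{\alpha}=\widehat{\gamma}$. A standard law of large numbers for product Bernoulli measures gives the initial concentration hypothesis of Theorem~\ref{HL}. Applying Theorem~\ref{HL} in the regime $\wht=(\ttl,1)$, $\ttl\in[0,1)$, the sequence $(Q_N^{\wht})$ is tight by Proposition~\ref{tight} and every limit point is supported on trajectories belonging to $L^2([0,T],\mathcal{H}^1(B))^3$ (Proposition~\ref{Sobo}) which solve \eqref{D+R} weakly with initial datum $\widehat{\gamma}$ (Proposition~\ref{Caract}). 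Any such limit point therefore provides the required weak solution.

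\textbf{Uniqueness.} I would mirror the spectral argument used for Theorem~\ref{Uniqueness1}, adapting the basis to the mixed boundary data. Invoking the spectral theorem for symmetric elliptic operators (see \cite{evans_partial_2010}) applied to the eigenproblem
\begin{equation}\label{eigenprobD+N}
\left\{
\begin{array}{ll}
-\Delta \phi = \alpha \phi, \\
\phi_{|\Gamma^-}=0,\\
\partial_{e_{1}}\phi_{|\Gamma^{+}}=0,
\end{array}
\right.
\end{equation}
provides a complete orthonormal family $\{V_k,\alpha_k\}_{k\geq 1}$ of $L^2(B)$ with $0<\alpha_1\leq\alpha_2\leq\cdots\to\infty$. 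Each $V_k$ vanishes on $\Gamma^-$ and so qualifies (seen as time-independent) as a test function in $\mathcal{C}_{\wht}=\mathcal{C}_{0,-}^{1,2}$; moreover $\partial_{e_1}V_k$ vanishes on $\Gamma^+$ by construction. Given two weak solutions $\whr^1,\whr^2$ with a common initial condition, set $\widehat{w}=\whr^1-\whr^2$, plug $V_k$ into \eqref{weakD+R} for each $\whr^j$, and subtract. The inhomogeneous Dirichlet boundary term $D\int_{\Gamma^-}b_i(\partial_{e_1}V_k)n_1\,dS$ is common to both solutions and cancels; the term $D\int_{\Gamma^+}\rho_i(\partial_{e_1}V_k)n_1\,dS$ drops out by the Neumann condition of the basis. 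Using $\langle w_i,\Delta V_k\rangle=-\alpha_k\langle w_i,V_k\rangle$ one is left with the pointwise ODE
\[
\partial_t\langle w_i(t),V_k\rangle=-D\alpha_k\langle w_i(t),V_k\rangle+\langle F_i(\whr^1_t)-F_i(\whr^2_t),V_k\rangle-\int_{\Gamma^+}V_k(r)\,w_i(t,r)\,n_1(r)\,dS(r).
\]

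Following the template of Theorem~\ref{Uniqueness1}, I would introduce $G_n(t)=\sum_{i=1}^3\sum_{k=1}^n|\langle w_i(t),V_k\rangle|^2$, differentiate, and bound the cross terms by Cauchy--Schwarz together with the Young inequality \eqref{useful2} on the $\Gamma^+$ contribution. Passing $n\to\infty$ via the Parseval-type identities \eqref{Norme2}, \eqref{Norme3} and the $\Gamma^+$-analogue of \eqref{Norme4} (valid because the traces $V_k|_{\Gamma^+}$ furnish a complete system of the torus factor, since the Dirichlet factor in $x_1$ does not vanish at $x_1=1$ by the Neumann condition), using the Lipschitz estimate \eqref{lipsch2} on $\widehat{F}$, one obtains for any $A>0$
\[
G'(t)\leq -2D\|\nabla\widehat{w}(t)\|_2^2+(C+A)\,G(t)+\tfrac{1}{A}\|\widehat{w}(t)\|_{L^2(\Gamma^+)}^2.
\]
I would then invoke the trace inequality \eqref{Trace2} to dominate $\|\widehat{w}\|_{L^2(\Gamma^+)}^2$ by $\|\widehat{w}\|_2^2+\|\nabla\widehat{w}\|_2^2$ and pick $A>1/(2D)$ so that the gradient contributions cancel, leaving $G'(t)\leq C'G(t)$ with $G(0)=0$. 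Grönwall then forces $G\equiv 0$, hence $\whr^1=\whr^2$.

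The main obstacle, as in the $(N_e;R)$ case, is the Robin boundary: one cannot fold it directly into the eigenvalue problem, so I impose Neumann on $\Gamma^+$ at the spectral level and recover the missing coercivity from the Laplacian through the trace inequality. The Dirichlet side on $\Gamma^-$ is handled essentially for free by the vanishing of $V_k$ there, which is precisely what makes the inhomogeneous $\widehat{b}$-term disappear from the equation for $\widehat{w}$.
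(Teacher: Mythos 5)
Your uniqueness argument is essentially identical to the paper's proof of Theorem~\ref{TD}: the same eigenbasis for the Laplacian with Dirichlet condition on $\Gamma^-$ and Neumann condition on $\Gamma^+$, the same Parseval-type decomposition of the difference of two solutions, the same absorption of the $\Gamma^+$ boundary term via Young's inequality and the trace estimate \eqref{Trace2}, and the same Grönwall conclusion. Your existence step, extracted from tightness and Proposition~\ref{Caract}, is not written out in the paper but is exactly the implicit probabilistic construction the authors rely on, so the proposal is correct and follows the paper's route.
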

\begin{proof}
     The proof follows the same lines as the previous one except that we consider another family of eigenfunctions of the Laplacian. Indeed, consider the following boundary-eigenvalue problem for the Laplacian:
\begin{equation}\label{eigenprobD+N}
\left\{
    \begin{array}{ll}
         -\Delta \phi = \gamma \phi  \\
         \phi(x)=0~ ~ \text{for}~ ~ x\in\Gamma^-\times \mathbb{T}^{d-1}\\
         \partial_{e_1}\phi(x)=0~ ~ \text{for}~ ~ x\in\Gamma^+\times \mathbb{T}^{d-1}\\
          \phi \in \mathcal{H}^{1}(B).
    \end{array}
    \right.
\end{equation}
Again, one can check that the countable system of eigensolutions $\{W_{n},~\gamma_{n},~ n\geq 1\}$ given below (in \eqref{BBAASE}) for the problem \eqref{eigenprobD+N} contains all possible eigenvalues and is a complete, orthonormal system in the Hilbert space $L^2(B)$, that the eigenvalues $\gamma_{n}$ have finite multiplicity and that
\begin{equation}\label{ordrevp2}
    0<\gamma_{1}< \gamma_{2}\cdots< \gamma_{n} \rightarrow \infty.
\end{equation}
Furthermore, \eqref{Norme2}, \eqref{Norme3} and \eqref{Norme4} stay valid when we replace $V_k$ by $W_k$, where, for $k=(k_1,\cdots,k_d)\in \mathbb{N}\times(\mathbb{N}^*)^{d-1} $,
\begin{equation}\label{BBAASE}
    W_k(x) = 2^{\frac{d-1}{2}}\Big[ (-1)^{k_1}\cos\Big(\big(\frac{\pi}{4}+ \frac{k_1\pi}{2} \big)x \Big)+ \sin\Big(\big(\frac{\pi}{4}+ \frac{k_1\pi}{2} \big)x \Big)\Big]\prod_{i=2}^{d}\sin(k_i\pi x_i) 
\end{equation}
with
\begin{equation} \label{gama}
    \gamma_k =\Big(\frac{\pi}{4}+ \frac{k_1\pi}{2} \Big)^2 + \sum_{i=2}^d k_i^2\pi^2.
\end{equation}
Again, by abuse of notation we have indexed the $W_k$'s by $\mathbb{N}^*$ instead of $(\mathbb{N}^*)^d$.

As before, take $\whr^1$ and $\whr^2$ two solutions of \eqref{D+R} with same initial data and introduce\begin{equation}\label{Hn}
    H_{n}(t) = \sum_{i=1}^3\sum_{k=1}^n\big|\langle \rho_{i}^1(t,.)-\rho_{i}^2(t,.),W_{k}\rangle\big|^2
\end{equation}
and
\begin{equation}
    H(t) = \|(\whr^1-\whr^2)(t,.)\|_2^2.
\end{equation}
Using the weak formulation \eqref{weakD+R} with $W_k$, we get that for any $1\leq i \leq 3$,
\begin{equation}
\begin{split}
        \langle(\rho_i^1-\rho_i^2)(t,.),W_k\rangle
        &= -D\gamma_k \int_0^t\langle(\rho_i^1-\rho_i^2)(s,.),W_k\rangle ds + \int_0^t\langle(F_i(\whr^1)-F_i(\whr^2))(s,.),W_k\rangle ds\\
        &-\int_{0}^t\int_{\Gamma^+}(\rho_i^1-\rho_i^2)(s,r)\Check{W}_k(r)n_1(r).dS(r)ds,
\end{split}
\end{equation}
where the $\Check{W}_k=\Check{V}_k$ are defined in \eqref{check}.
Then, we conclude following exactly the same lines as the proof of Theorem \ref{Uniqueness1}.
\end{proof}

\subsubsection{Uniqueness of the solution in the other regimes}
In order to prove uniqueness in the other regimes, one can follow the same classic method used above. The orthonormal basis used to decompose the difference of two solutions as in \eqref{Decompose1} or \eqref{Hn} then depends on the boundary conditions. For the (Dirichlet ; Dirichlet) regime, the decomposition is carried out on the eigenvectors of the following boundary-eigenvalue problem for the Laplacian:
\begin{equation}\label{eigenprobDirich} 
\left\{
    \begin{array}{ll}
         -\Delta \phi = \delta \phi  \\
          \phi \in \mathcal{H}_0^{1}(B),
    \end{array}
    \right.
\end{equation}
for which the associated family of eigenvectors is
\[U_k(x_1,\cdots x_d) = 2^{\frac{d-1}{2}}\prod_{i=1}^d\sin(k_i\pi x_i),\]
with eigenvalues given by
\[\delta_k= \sum_{i=1}^d k_i^2\pi^2\]
for $k=(k_1,\cdots,k_d)\in (\mathbb{N}^*)^d$. As before, for $V,W\in L^2(B)$,
\begin{align}
    \langle V,W\rangle _{2}=\underset{n\rightarrow \infty}{\lim}\sum_{k=1}^n\langle V,U_{k}\rangle_{2}\langle W,U_{k}\rangle_{2},\label{PSn}\\
    \langle \nabla V, \nabla W\rangle_2=\underset{n\rightarrow \infty}{\lim}\sum_{k=1}^n\delta_k \langle V,U_k\rangle \langle W,U_k\rangle, \label{norme2dirich}\\
    \|V\|_{L^2(\Gamma)}^2=\underset{n\rightarrow \infty}{\lim}\sum_{k=1}^n\Big(\int_{\Gamma}V(r)\Check{U}_k(r)n_1(r)dS(r) \Big)^2\label{norme3dirich}
\end{align}
where the $\Check{U}_k=\Check{V}_k$ are defined in \eqref{check}.
\section{Hydrostatic limit}
In this section, we prove Theorem \ref{T hydrostat} which states that when the parameters  $r,\lambda_1,\lambda_2,d, D$ satisfy certain conditions, starting from an invariant measure, the system converges to the stationary profile of the corresponding hydrodynamic equation. Precisely, recall that in Section 2, for $\widehat{\theta}\in (\R^+)^2$  we defined $\mu_{N}^{ss}(\widehat{\theta})$ as the sequence of unique invariant measures for the irreducible dynamics defined by \eqref{Generator}. The hydrostatic principle states that this sequence is associated to the unique stationary solution of the hydrodynamic equation, if existence and uniqueness of such a solution hold. For the proof, we were inspired by \cite{farfan_hydrostatics_2011} and the key argument relies on the convergence of all the trajectories satisfying the hydrodynamic equation to the unique stationary profile of these equations. In \cite{farfan_hydrostatics_2011}, the convergence of trajectories is established thanks to a comparison principle. The difficulty here is that we are dealing with a system of coupled equations and we need to define a specific order for which such a comparison principle holds. Now in \cite[Theorem 4.1]{kuoch:hal-01100145}, it has been proved that at the microscopic level, the generalized contact process is attractive only for the following order:
\begin{equation}\label{order}
    2<0<3<1. 
\end{equation}
Note that in the corresponding state space $\whsN$, the order above translates into
\begin{equation*}
    (0,1)<(0,0)<(1,1)<(1,0).
\end{equation*}
Attractiveness for the order \eqref{order} means that given two configurations $\e \leq \overset{\sim}{\e}$, it is possible to build a coupling between $(\eta_t)_{t\geq 0}$ and $(\overset{\sim}{\e}_t)_{t\geq 0}$ where both these processes evolve according to the dynamics given by \eqref{Generator}, such that $ \e_0\leq  \overset{\sim}{\e}_0$  and almost surely, for all $t\geq 0$, $\e_t \leq \overset{\sim}{\e}_t$ pointwise in the sense of \eqref{order}. Note that using \cite[Theorem 2.4]{Borrello}, one can show that the system remains attractive when adding an exchange and reservoir dynamics. It is then natural to think that attractiveness also holds at the macroscopic level through a comparison principle. A comparison principle means that if two profiles are such that at a certain time, one is smaller than the other almost everywhere, then the same is true at any later time. Considering the microscopic order \eqref{order} it is intuitive to consider that the largest state at the macroscopic level corresponds to $(\rho_1=1,\rho_2=0,\rho_3=0)$ and the smallest state to  $(\rho_2=1, \rho_1=\rho_3=0)$. We will work under the following change of coordinates:
\begin{equation} \label{Changecoord}
    \left\{
    \begin{array}{ll}
        \rho_1\\ 
        T:=\rho_1+\rho_3\\
        R:=1-(\rho_2+\rho_3)
    \end{array}
\right.
\end{equation}
which is consistent with the fact that $(1,1,1)$ corresponds to the largest profile $(\rho_1=1,\rho_2=0,\rho_3=0)$ and $(0,0,0)$ with the lowest one $(\rho_2=1, \rho_1=\rho_3=0)$. In the sequel, we will say that given two profiles $\whr$ and $\widehat{\phi}$, $\whr\leq \widehat{\phi} $ if: \begin{equation}
        \left\{
    \begin{array}{ll}
        \rho_1\leq \phi_1\\ 
        \rho_1+\rho_3\leq \phi_1+\phi_3\\
        1-(\rho_2+\rho_3)\leq 1-(\phi_2+\phi_3) 
    \end{array}
\right.
\end{equation}
almost everywhere. Note that this new order adapted at the microscopic level, i.e
\begin{equation}\label{Newworder}
    \Tilde{\e} \leq \e ~ \Leftrightarrow~ \forall x\in B_N,~  \left\{
    \begin{array}{ll}
        \Tilde{\e}_1(x) \leq \e_1(x)\\ 
        \Tilde{\e}_1(x) + \Tilde{\e}_3(x) \leq \e_1(x) + \e_3(x)\\
        \e_2(x) + \e_3(x) \leq \Tilde{\e}_2(x)+\Tilde{\e}_3(x),
    \end{array}
\right.
\end{equation}
is not equivalent to the one given in \eqref{order}. Indeed, consider the configuration $\e$ full of $3$'s and $\Tilde{\e}$ full of $0$'s. Then $\Tilde{\e} \leq \e$ for the order \eqref{order} but not for the order \eqref{Newworder}. However, one can check that if $\Tilde{\e} \leq \e$ for the order \eqref{Newworder}, then $\Tilde{\e} \leq \e$ for the order \eqref{order}, so the macroscopic order is consistent with the microscopic one but it is weaker, so we can compare fewer profiles.

However, the notable fact, which we will prove, is that we have monotonicity under this new order, i.e. a comparison principle holds under the change of coordinates \eqref{Changecoord}. To prove that, as previously, since we are working in any dimension $d\geq 1$ with mixed boundary conditions, some care must be taken to deal with the integral terms on $\Gamma$. For that, we strongly rely on analytical tools stated in \cite{Roubi}.

Under the change of coordinates \eqref{Changecoord}, the coupled equations in the bulk become, :
\begin{equation} \label{Dirichletmodifie}
    \left\{
    \begin{array}{ll}
        \partial_t\rho_1=D\Delta \rho_1 + F_1(\rho_1,T,R)\\ 
        \partial_t T= D\Delta T + H(\rho_1,T,R)\\
        \partial_t R= D\Delta R + J(R)
    \end{array}
\right.
\end{equation}
with
\begin{equation} 
    \left\{
    \begin{array}{ll}
         F_1(\rho_1,T,R) = 2d\big[(\lambda_1-\lambda_2)\rho_1 + \lambda_2T \big](R-\rho_1) + T - (r+2)\rho_1\\ 
        H(\rho_1,T,R) = 2d\big[(\lambda_1-\lambda_2)\rho_1 + \lambda_2T\big](1-T)-T\\
        J(R)= -(r+1)R+1.
    \end{array}
\right.
\end{equation}
We will see that the comparison principle stated and proved in Lemma \ref{L Att} yields the following Theorem which is used to prove Theorem \ref{T hydrostat}.

\begin{thm} \label{T3}Suppose that conditions $(H_1)$ hold. Then, there exists a unique stationary solution $\overline{\rho}^{D,R}$, resp. $\overline{\rho}^{N_e,R}$, of \eqref{D+R}, resp. \eqref{N+R}. Furthermore, for any solution $\whr^{D,R}$, resp. $\whr^{N_e,R}$, to the boundary value problem \eqref{D+R}, resp. \eqref{N+R},
\begin{equation}\label{P1}
    \underset{t \rightarrow \infty}{\lim}~ \sum_{i=1}^3\|\rho_{i}^{D,R}(t,.) - \overline{\rho}^{D,R}_{i}(.)\|_{1} = 0,
\end{equation}
resp.
\begin{equation}
    \underset{t \rightarrow \infty}{\lim}~ \sum_{i=1}^3\|\rho_{i}^{N_e,R}(t,.) - \overline{\rho}^{N_e,R}_{i}(.)\|_{1} = 0.
\end{equation}
\end{thm}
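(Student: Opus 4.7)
The plan is to combine the comparison principle (Lemma \ref{L Att}) with the triangular structure of \eqref{Dirichletmodifie} in the coordinates $(\rho_1,T,R)$ to build a pair of extremal trajectories that sandwich every admissible solution, and then to show that conditions $(H_1)$ force the two long-time limits to coincide.

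First I would exploit the triangular structure: the $R$-equation $\partial_t R=D\Delta R-(r+1)R+1$ is linear and completely decoupled from $(\rho_1,T)$, with boundary data inherited from \eqref{D+R} or \eqref{N+R}. Standard linear parabolic theory (energy estimates plus the fact that $r+1>0$) yields existence and uniqueness of a stationary profile $\overline R$, together with $L^1$-convergence of $R(t,\cdot)$ to $\overline R$. So once $R$ is dealt with, the difficulty is concentrated on the coupled $(\rho_1,T)$ subsystem whose nonlinearity involves $\overline R$ only through the $\rho_1$-equation.

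Second, using Lemma \ref{L Att} for the order induced by \eqref{Changecoord}, I would consider the two trajectories $\whr^+,\whr^-$ starting respectively from the maximal profile $(\rho_1,\rho_2,\rho_3)=(1,0,0)$ (i.e.\ $(\rho_1,T,R)=(1,1,1)$) and the minimal profile $(0,1,0)$ (i.e.\ $(\rho_1,T,R)=(0,0,0)$). Comparing $\whr^{\pm}(0,\cdot)$ with $\whr^{\pm}(h,\cdot)$ for $h>0$ and invoking the comparison principle shows that $\whr^+$ is nonincreasing and $\whr^-$ is nondecreasing in $t$ for the new order. Being uniformly bounded, they converge monotonically almost everywhere and in $L^1$ to bounded profiles $\overline\rho^+$ and $\overline\rho^-$; passing to the limit in the weak formulations \eqref{weakD+R} or \eqref{WeakN+R} identifies these limits as stationary solutions. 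Since every bounded weak solution $\whr$ of \eqref{D+R} or \eqref{N+R} takes values in $[0,1]^3$, the sandwich $\whr^-(0,\cdot)\leq \whr(0,\cdot)\leq \whr^+(0,\cdot)$ holds initially, and the comparison principle propagates it for all $t>0$. Consequently, if one can prove $\overline\rho^+=\overline\rho^-$, then every trajectory converges in $L^1$ to this common stationary profile, which establishes simultaneously existence, uniqueness and the convergence statement of Theorem \ref{T3}.

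The main obstacle is thus to show $\overline\rho^+=\overline\rho^-$. Setting $w_{\rho_1}=\overline\rho_1^+-\overline\rho_1^-$, $w_T=\overline T^+-\overline T^-$, $w_R=\overline R^+-\overline R^-$, uniqueness for the linear $R$-problem gives $w_R\equiv 0$, so the remaining equations form an elliptic system for $(w_{\rho_1},w_T)$ obtained by linearising $(F_1,H)$ along a convex combination. I would test this system against $w_{\rho_1}$ and $w_T$, integrate by parts, and control the boundary contributions via the trace inequality \eqref{Trace2} and the Poincaré inequality involving $\delta_1$ from \eqref{eigenprobDirich}. The three numerical inequalities in $(H_1)$ are precisely what is needed: $D\geq 1$ absorbs the boundary term produced by \eqref{Trace2}, $r+1>2d(\lambda_1-\lambda_2)$ controls the cross term coupling $w_{\rho_1}$ and $w_T$, and $1>2d\lambda_2$ dominates the diagonal reaction contribution in the $T$-equation. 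Together they yield a strict coercivity bound $c(\|w_{\rho_1}\|_2^2+\|w_T\|_2^2)\leq 0$ with $c>0$, so that $w\equiv 0$. Running essentially the same energy identity for the time-dependent difference $\whr(t,\cdot)-\overline\rho$ and applying Grönwall would actually give exponential $L^2$-convergence directly, strengthening the $L^1$ statement. This energy computation, carried out separately in the Dirichlet$+$Robin and Neumann$+$Robin cases (only the boundary term treatment differs, as in the uniqueness proofs of Theorems \ref{Uniqueness1} and \ref{TD}), is the real core of the argument and the step where the otherwise mysterious hypotheses $(H_1)$ become transparent.
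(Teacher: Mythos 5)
Your proposal follows the same architecture as the paper: the comparison principle of Lemma \ref{L Att} in the coordinates \eqref{Changecoord}, the two extremal trajectories started from $(1,1,1)$ and $(0,0,0)$, their monotonicity in time (Corollary \ref{C1}), and the sandwich argument to transfer convergence of the extremal pair to an arbitrary trajectory. Your observation that the $R$-equation decouples and is linear is also exactly how the paper proceeds (the quantity $A_n(t)$ is treated first and separately in Lemma \ref{L2}, and the finiteness of $\int_0^\infty A(t)\,dt$ is then used to control the cross terms involving $R$ in the $(\rho_1,T)$ estimates). Where you genuinely diverge is the final step: you propose to first identify the monotone limits $\overline\rho^{\pm}$ as stationary solutions and then prove $\overline\rho^+=\overline\rho^-$ by a coercive \emph{elliptic} energy estimate, whereas the paper proves the \emph{parabolic} statement $\|\whr^1(t,\cdot)-\whr^0(t,\cdot)\|_2\to 0$ directly (Lemma \ref{L2}), by decomposing on the eigenbasis $\{W_k\}$, integrating the resulting differential inequalities over $[0,T]$, showing $\int_0^\infty B(t)\,dt<\infty$ and $\int_0^\infty C(t)\,dt<\infty$ under $(H_1)$, and invoking monotonicity of $t\mapsto \whr^1(t,\cdot)-\whr^0(t,\cdot)$ to upgrade integrability to convergence.

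This reordering creates a genuine gap. Your existence step asserts that ``passing to the limit in the weak formulations \eqref{weakD+R} or \eqref{WeakN+R} identifies these limits as stationary solutions.'' Monotone a.e.\ (hence $L^1$ and $L^2$) convergence of $\whr^{\pm}(t,\cdot)$ is not enough to pass to the limit in those weak formulations: the boundary integrals over $\Gamma^+$ involve $\tr(\rho_i(s,\cdot))$, and the trace operator is continuous from $\mathcal{H}^1(B)$ to $L^2(\Gamma)$ but not from $L^2(B)$, so you need a uniform-in-time $\mathcal{H}^1$ bound and convergence of traces (and of gradients, to make sense of the limit object in $\mathcal{H}^1$), none of which is provided. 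The paper's scheme is built precisely to avoid this: once Lemma \ref{L2} gives $\whr^1(t)-\whr^0(t)\to 0$, the common limit $\underline\rho$ is shown to be stationary by the squeeze $\whr^0(t+s,\cdot)\le \underline\rho(s,\cdot)\le \whr^1(t+s,\cdot)$ (where $\underline\rho(s,\cdot)$ denotes the solution started from $\underline\rho$), letting $t\to\infty$; no passage to the limit in the weak formulation is ever needed. Since your elliptic uniqueness argument presupposes that $\overline\rho^{\pm}$ are already stationary weak solutions, the whole chain is incomplete without repairing this step. A second, more minor overreach: the claimed exponential $L^2$-convergence of a general trajectory $\whr(t,\cdot)-\overline\rho$ via Gr\"onwall is not supported, because the sign control \eqref{partiepos} used to handle the cross terms in \eqref{F1-F2}--\eqref{H1-H2} is only available for an \emph{ordered} pair of solutions; for an arbitrary trajectory the difference from $\overline\rho$ has no sign, and it is not checked that $(H_1)$ still yields coercivity after replacing those terms by Cauchy--Schwarz bounds. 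The paper only claims, and only needs, the $L^1$ convergence obtained through the sandwich.
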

Note that this result can be equivalently formulated in the change of coordinates \eqref{Changecoord} and we will prove it in that setting in the next subsection.

\begin{rem} \label{Necessary}
One could ask if conditions on the parameters are necessary to establish existence and uniqueness of the stationary solution of the hydrodynamic equation. Could we not generalize the result to all parameters? In order to answer that, we simulated the solutions to the equation in the (Neumann ; Neumann) regime for which the constant profile $(\rho_1=0, \rho_2=\frac{r}{r+1},\rho_3=0)$ is stationary. Indeed,
\[F_{1}\Big(0,\frac{r}{r+1},0\Big) = F_{2}\Big(0,\frac{r}{r+1},0\Big)=F_{3}\Big(0,\frac{r}{r+1},0\Big)=0\]
 and it corresponds to the extinction regime, that is, there are no more wild insects. We observed (see below in the Appendix \ref{Simuu}) that in dimensions $1$, for parameters $\lambda_1=1$, $\lambda_2 = 0.75$ and $D=r=1$, for which conditions $(H_1)$ are not satisfied, the solution of the hydrodynamic equation starting from $\rho_1=1, \rho_2=\rho_3=0$ converges to a constant profile which is not $(0,\frac{r}{r+1},0)$ so uniqueness does not hold. Simulations confirm that Theorem \ref{T3} does not hold in all generality and that conditions on the parameters are necessary, although conditions $(H_1)$ might not be the optimal ones.
\end{rem}

\subsection{Proof of the hydrostatic limit}
Let us prove Theorem \ref{T hydrostat}. We prove the first point, the second one follows in the same way. Denote by $\mathcal{A}_{T} \subset D([0,T],\big(\mathcal{M}^+\big)^3)$ the set of trajectories $\{\widehat{\rho}(t,u)du,~ 0 \leq t \leq T \}$ whose density $\widehat{\rho}=(\rho_{1},\rho_{2},\rho_{3})$ satisfies conditions \eqref{conditiona} and \eqref{CI} of the definition of a weak solution of \eqref{D+R} for some initial profile $\widehat{\rho}_{0}$. Consider $Q^*_{ss}(\widehat{\theta})$ a limit point of the sequence $(Q_{\mu_{N}^{ss}(\widehat{\theta})}^N)_{N\geq 1}$ associated to the invariant measures. By Theorem \ref{HL}, 
 \begin{equation}
    Q^*_{ss}(\widehat{\theta})\big(\mathcal{A}_{T} \big)=1. 
 \end{equation}
Now consider $Q_{ss}^{N_{k}}(\widehat{\theta})$ a subconverging sequence of $(Q_{\mu_{N}^{ss}}^N(\widehat{\theta}))_{N\geq 1}$. By stationarity of $\mu_{N}^{ss}(\widehat{\theta})$
\begin{equation}
\begin{split}
    \mathbb{E}_{Q_{ss}^{N_{k}}( \widehat{\theta})}\Big(~ \Big|\langle\widehat{\pi}^N, \whg\rangle - \langle\overline{\rho},\whg\rangle \Big| ~ \Big) &= \mathbb{E}_{Q_{ss}^{N_{k}}(\widehat{\theta})}\Big(~\Big|\langle\widehat{\pi}_{T}^N, \whg\rangle - \langle\overline{\rho},\whg\rangle\Big|~\Big)
\end{split}
\end{equation}
and
\begin{equation}\label{Debproofhydro}
\begin{split}
    \underset{k \rightarrow \infty}{\lim}~ \mathbb{E}_{Q_{ss}^{N_{k}}(\widehat{\theta})}\Big(~\Big|\langle\widehat{\pi}_{T}^N, \whg\rangle - \langle\overline{\rho},\whg\rangle \Big|~\Big) &= \mathbb{E}_{Q_{ss}^*(\widehat{\theta})}\Big(~\Big|\langle\widehat{\pi}_{T},\widehat{G}\rangle - \langle\overline{\rho},\whg\rangle \Big|\mathds{1}_{\mathcal{A}_{T}} \Big)\\
    & \leq 
    \sum_{i=1}^3 \|G_{i}\|_{\infty}~ \mathbb{E}_{Q_{ss}^*}
\Big[ \| \rho_{i}(T,.) - \overline{\rho}_{i}(.) \|_{1} \Big] .
\end{split}
\end{equation}
Then, one concludes thanks to \eqref{P1} in Theorem \ref{T3} and dominated convergence theorem.

\subsection{Proof of Theorem \ref{T3}}
In order to prove Theorem \ref{T3} we first establish a comparison principle (Lemma \ref{L Att}). Then, we show that the difference between the largest solution and the smallest solution vanishes (Lemma \ref{L2}).
Using an integration by parts, it is useful to rewrite  
 the weak formulations \eqref{weakD+R} and \eqref{weakD+R}, in the following suitable forms: for any $0\le \tau\le t\le T$, for any 
 $G\in {\mathcal C}^2([0,T]\times B)$,
\begin{equation}\label{weakD+R-2}
\begin{split}
    \langle\whr_t,\whg_t\rangle -\langle\whr_\tau,\whg_\tau\rangle  = &\int_{\tau}^t\langle\whr_s,\partial_{s}\whg_s\rangle ds - D\int_{\tau}^t\int_B \big(\nabla \whr_s \cdot\, \nabla \whg_s\big) (r)dr ds \\
\ &  -\int_{\tau}^t\langle\widehat{F}(\whr_{s}),\whg_s\rangle ds
  - D\sum_{i=1}^3\int_{\tau}^t\int_{\Gamma^-}b_{i}(r)(\partial_{e_{1}}G_{i,s})(r)n_{1}(r).dS(r)ds\\
\ &   +\sum_{i=1}^3\int_{\tau}^t\int_{\Gamma^+}G_{i}(r)(b_{i}(r)-\rho_{i}(s,r))n_{1}(r).dS(r)ds=0,
\end{split}
\end{equation}
and
\begin{equation}\label{WeakN+R-2}
    \begin{split}
        \langle \whr_t,\whg_t\rangle -\langle\whr_\tau,\whg_\tau\rangle  = &\int_{\tau}^t\langle\whr_s,\partial_{s}\whg_s\rangle ds - D\int_{\tau}^t\int_B \big(\nabla \whr_s \cdot\, \nabla \whg_s\big) (r)dr ds \\
 \ & -\int_{\tau}^t\langle \widehat{F}(\whr_{s}),\whg_s\rangle ds
 -\int_{\Gamma^+}G_{i}(r)(b_{i}(r)-\rho_{i}(s,r))n_{1}(r).dS(r)ds=0.
    \end{split}
\end{equation}

\begin{lemma}\label{L Att} Consider $\whr_{0}^1$ and $\whr_{0}^2$ two initial profiles.
\begin{itemize}
    \item [•]Denote by $\whr_{t}^1$ resp. $\whr_{t}^2$, the solutions to the (Dirichlet ; Robin) boundary problem \eqref{D+R} associated to each of those initial profiles. Assume that there is an $s\geq 0$ such that almost surely (in the Lebesgue measure sense), $\rho_1^1(s,u)\leq \rho_1^2(s,u)$, $T^1(s,u)\leq T^2(s,u)$ and $R^1(s,u)\leq R^2(s,u)$. Then, for all $s\geq t$, $\rho_1^1(t,u)\leq \rho_1^2(t,u)$, $T^1(t,u)\leq T^2(t,u)$ and $R^1(t,u)\leq R^2(t,u)$ almost surely.
    \item [•] The same result holds when $\whr_{t}^1$ resp. $\whr_{t}^2$, are two solutions to the (Neumann ; Robin) boundary problem \eqref{N+R}.
\end{itemize}

\end{lemma}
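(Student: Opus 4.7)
The plan is to prove both cases simultaneously by exploiting the fact that, in the coordinates $(\rho_1,T,R)$ introduced in \eqref{Changecoord}, the reaction system \eqref{Dirichletmodifie} is cooperative. A direct computation gives
\begin{equation*}
\partial_T F_1 = 2d\lambda_2(R-\rho_1)+1,\quad \partial_R F_1 = 2d\bigl[(\lambda_1-\lambda_2)\rho_1 + \lambda_2 T\bigr],\quad \partial_{\rho_1} H = 2d(\lambda_1-\lambda_2)(1-T),\quad \partial_R H = 0,
\end{equation*}
and all these off-diagonal partial derivatives are non-negative on the admissible range: indeed $R-\rho_1 = \rho_0 \geq 0$ and $T\leq 1$, properties which are preserved along the convex combination of the two solutions. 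Since the profiles take values in $[0,1]^3$, all partial derivatives of $F_1,H,J$ are moreover uniformly bounded by some constant $K=K(d,\lambda_1,\lambda_2,r)$.

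Setting $\xi := \rho_1^1-\rho_1^2$, $\tau := T^1-T^2$, $\sigma := R^1-R^2$ and writing the reaction differences via the mean value theorem as
\begin{equation*}
F_1(\whr^1) - F_1(\whr^2) = A_1 \xi + B_1 \tau + C_1 \sigma,\qquad H(\whr^1) - H(\whr^2) = A_2 \xi + B_2 \tau,\qquad J(R^1)-J(R^2) = -(r+1)\sigma,
\end{equation*}
with coefficients bounded by $K$ and satisfying $B_1,C_1,A_2 \geq 0$ by the above monotonicity, I would introduce the positive-part functional
\begin{equation*}
\Phi(t) := \tfrac{1}{2}\bigl(\|\xi^+(t,\cdot)\|_2^2 + \|\tau^+(t,\cdot)\|_2^2 + \|\sigma^+(t,\cdot)\|_2^2\bigr),
\end{equation*}
and aim to show $\Phi'(t) \leq C\Phi(t)$ for $t\geq s$ with some $C>0$ depending only on $K$ and $r$. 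Combined with $\Phi(s) = 0$, which holds by hypothesis, Gr\"onwall's inequality would then give $\Phi\equiv 0$ on $[s,T]$, proving the claim.

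To compute $\Phi'(t)$, I would use the integrated-by-parts weak formulation \eqref{weakD+R-2} or \eqref{WeakN+R-2} together with the Stampacchia chain rule for the positive part in $L^2([s,T],\mathcal{H}^1(B))$, as stated in \cite{Roubi}. Testing the equation for $\xi$ against $\xi^+$ gives
\begin{equation*}
\tfrac{1}{2}\tfrac{d}{dt}\|\xi^+\|_2^2 \;=\; -D\|\nabla\xi^+\|_2^2 \;+\; D\int_{\Gamma} \xi^+\,\partial_{e_1}\xi\,n_1\,dS \;+\; \int_B \bigl(A_1\xi + B_1\tau + C_1\sigma\bigr)\xi^+\,du,
\end{equation*}
and analogous identities for $\tau^+$ and $\sigma^+$. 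The boundary contribution on $\Gamma^-$ vanishes in both regimes: in the (Dirichlet ; Robin) case the traces of $\xi,\tau,\sigma$ are zero on $\Gamma^-$ since both solutions match $\widehat{b}$ there, while in the (Neumann ; Robin) case $\partial_{e_1}\xi|_{\Gamma^-}=0$. On $\Gamma^+$ the Robin condition $\partial_{e_1}\xi|_{\Gamma^+} = -\tfrac{1}{D}\xi$ produces a favourable sign, $D\int_{\Gamma^+}\xi^+\,\partial_{e_1}\xi = -\int_{\Gamma^+}(\xi^+)^2 \leq 0$, and similarly for $\tau^+$ and $\sigma^+$. Dropping the non-positive gradient and boundary contributions, and exploiting $B_1,C_1,A_2\geq 0$ together with the pointwise bounds $\xi^+\tau \leq \xi^+\tau^+$, $\xi^+\sigma \leq \xi^+\sigma^+$ and Young's inequality, yields $\Phi'(t)\leq C\Phi(t)$ as required.

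The main obstacle is the rigorous justification of the chain rule for $t\mapsto\|\xi^+\|_2^2$, since $\whr^1,\whr^2$ are only weak solutions with $\partial_t\whr^i$ a priori valued in the dual space of $\mathcal{H}^1(B)$. This requires the Lions-type lemma on time derivatives of functions in $L^2([0,T],V)\cap H^1([0,T],V^*)$ combined with the fact that the positive part is a $1$-Lipschitz retract of $\mathcal{H}^1(B)$ whose weak derivative is $\nabla\xi\,\mathds{1}_{\{\xi>0\}}$; both ingredients are provided by the analytical framework of \cite{Roubi}, together with an approximation of $\xi^+$ by admissible smooth test functions respecting the boundary classes $\mathcal{C}_{\wht}$.
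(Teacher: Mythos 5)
Your proposal is correct and follows essentially the same route as the paper: the same positive-part functional $\int_B(\rho_1^1-\rho_1^2)_+^2+(T^1-T^2)_+^2+(R^1-R^2)_+^2$, the same chain rule for positive parts from \cite{Roubi}, the same sign analysis of the off-diagonal reaction couplings (which the paper carries out by an explicit algebraic decomposition of $F_1(\whr^1)-F_1(\whr^2)$, $H(\whr^1)-H(\whr^2)$ rather than via the mean value theorem, but with the same nonnegativity conclusions), the same favourable sign of the Robin boundary term on $\Gamma^+$, and Grönwall to conclude. The only differences are presentational: you phrase cooperativity via partial derivatives and are somewhat more explicit about the Lions-type justification of differentiating $t\mapsto\|\xi^+\|_2^2$, which the paper simply delegates to Lemma 7.3 and Remark 7.5 of \cite{Roubi}.
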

Note that Lemma \ref{L Att} holds for all parameters $r,\lambda_1,\lambda_2,d$ and $D$, regardless of conditions $(H_1)$.
\begin{proof} We prove the first point and the proof of the second one follows in the same way. Introduce
\begin{equation}
\begin{split}
    A(t)  &= \int_B \big(\rho_1^1-\rho_1^2 \big)_+^2(t,u){\mathrm{d}u}+ \int_B \big(T^1-T^2 \big)_+^2(t,u){\mathrm{d}u} + \int_B \big(R^1-R^2 \big)_+^2(t,u){\mathrm{d}u}
 \end{split}
\end{equation}
where $x_+$ denotes $\max(x,0)$, the positive part of $x$. We show that $A(t)=0$ for all $t\geq s$. 
Using the weak formulation \eqref{weakD+R} of the solution of the (Dirichlet ; Robin) boundary problem and using Lemma 7.3 and Remark 7.5 in \cite{Roubi}, we get:
\begin{equation}
    \begin{split}
        \frac12\frac{d}{dt}\int_B \big(\rho_1^1-\rho_1^2 \big)_+^2&(t,u){\mathrm{d}u}=\frac12\frac{d}{dt}\int_B \big(\rho_1^1-\rho_1^2 \big)_+^2(t,u){\mathrm{d}u}\\
        &= - D\int_B \nabla(\rho_1^1-\rho_1^2) \nabla\big(\rho_1^1-\rho_1^2 \big)_+(t,u){\mathrm{d}u} \\
        &+ \int_B\big(F_1(\whr^1)-F_2(\whr^2)\big)\big(\rho_1^1-\rho_1^2 \big)_+ \big)(t,u){\mathrm{d}u} - \int_{\Gamma^+}(\rho_1^1-\rho_1^2)_+^2(t,u) {\mathrm{d}u}.
    \end{split}
\end{equation}
Using that $\nabla\big((\rho_1^1-\rho_1^2 )_+\big) = \mathds{1}_{(\rho_1^1-\rho_1^2 )\geq 0} \nabla\big(\rho_1^1-\rho_1^2 \big) $ and that $\int_B \big(\rho_1^1-\rho_1^2 \big)_+^2(0,u){\mathrm{d}u}=0 $,
we are left with:
\begin{equation}
    \begin{split}
        \frac{1}{2}\int_B \big(\rho_1^1-\rho_1^2 \big)_+^2(t,u){\mathrm{d}u}&\leq -\int_0^t\int_BD\mathds{1}_{(\rho_1^1-\rho_1^2 )\geq 0} \|\nabla\big(\rho_1^1-\rho_1^2 \big)\|_2^2(r,u){\mathrm{d}u}{\mathrm{d}r}\\
        &+ \int_0^t\int_B\big(F_1(\whr^1)-F_2(\whr^2)\big)\big((\rho_1^1-\rho_1^2 \big)_+ \big)(r,u){\mathrm{d}u}{\mathrm{d}r}.
    \end{split}
\end{equation}
Proceeding in the same way for $\int_B \big(T^1-T^2 \big)_+^2(t,u){\mathrm{d}u}  $ and $\int_B \big(R^1-R^2 \big)_+^2(t,u){\mathrm{d}u} $ we get:
\begin{equation}
\begin{split}
        \frac{1}{2}\int_B \big(T^1-T^2 \big)_+^2(t,u){\mathrm{d}u} &\leq -\int_0^t\int_BD\mathds{1}_{(T^1-T^2 )\geq 0} \|\nabla\big(T^1-T^2 \big)\|_2^2(r,u){\mathrm{d}u}{\mathrm{d}r}\\
        &+ \int_0^t\int_B\big(H(\whr^1)-H(\whr^2)\big)(T^1-T^2 \big)_+(r,u){\mathrm{d}u}{\mathrm{d}r}
\end{split}
\end{equation}
and
\begin{equation}
\begin{split}
        \frac{1}{2}\int_B \big(R^1-R^2 \big)_+^2(t,u)du &\leq-\int_0^t\int_BD\mathds{1}_{(R^1-R^2 )\geq 0} \|\nabla\big(R^1-R^2 \big)\|_2^2(r,u){\mathrm{d}u}{\mathrm{d}r}\\
        &+ \int_0^t\int_B\big(J(R^1)-J(R^2)\big)(R^1-R^2 \big)_+(r,u){\mathrm{d}u}{\mathrm{d}r}.
        \end{split}
\end{equation}
Therefore,
\begin{equation}\label{A(t)}
    \begin{split}
        \frac{1}{2}A(t)&\leq \int_0^t\int_B\big(F_1(\whr^1)-F_2(\whr^2)\big)\big(\rho_1^1-\rho_1^2 \big)_+ (r,u){\mathrm{d}u}{\mathrm{d}r}\\
        &+ \int_0^t\int_B\big(H(\whr^1)-H(\whr^2)\big)(T^1-T^2 \big)_+(r,u){\mathrm{d}u}{\mathrm{d}r}\\
        &+  \int_0^t\int_B\big(J(R^1)-J(R^2)\big)(R^1-R^2 \big)_+(r,u){\mathrm{d}u}{\mathrm{d}r}.
    \end{split}
\end{equation}
Now let us use the explicit expressions of $F_1$, $J$ and $H$. We also use the following inequality: for any $C\geq 0$ $x,y\in \R$,
\begin{equation}\label{partiepos}
    Cxy_+\leq Cx_+ y_+.
\end{equation}
In order to avoid confusions, a squared term will always be put between brackets, while, for instance $\rho_1^2$ refers to the first coordinate of $\whr^2$. We will denote by $C$ a positive constant which depends on $\lambda_1,\lambda_2, r, d$ with values possibly changing from one line to the next.
\begin{equation}\label{F1-F2}
\begin{split}
       \big(F_1(\whr^1)-F_2(\whr^2)\big)\big(\rho_1^1-\rho_1^2 \big)_+ 
       &= \big[\ddd (R^2-\rho_1^1-\rho_1^2) -2d\lambda_1T^1-(r+2) \big](\rho_1^1-\rho_1^2)_+^2\\
       &+\big[ 2d\lambda_1 \rho_1^1 + 2d\lambda_2 \rho_3^1 \big](R^1-R^2)(\rho_1^1-\rho_1^2)_+ \\
       &+ \big[ 1+2d\lambda_2(1-\rho_2^2-\rho_3^3-\rho_1^2)\big](T^1-T^2)(\rho_1^1-\rho_1^2)_+\\
       &\leq C(\rho_1^1-\rho_1^2)_+^2 + \big[ 2d\lambda_1 \rho_1^1 + 2d\lambda_2 \rho_3^1 \big](R^1-R^2)_+(\rho_1^1-\rho_1^2)_+ \\
       &+ \big[ 1+2d\lambda_2(1-\rho_2^2-\rho_3^3-\rho_1^2)\big](T^1-T^2)_+(\rho_1^1-\rho_1^2)_+,
\end{split}
\end{equation}
where we used \eqref{partiepos} and the fact that $ 2d\lambda_1 \rho_1^1 + 2d\lambda_2 \rho_3^1\geq 0 $ and $ 1+2d\lambda_2(1-\rho_2^2-\rho_3^3-\rho_1^2)\geq 0 $ in the last line.
\begin{equation}\label{H1-H2}
\begin{split}
        &\big(H(\whr^1)-H(\whr^2)\big)(T^1-T^2)_+ = \big[2d\lambda_2-\ddd \rho_1^1-2d\lambda_2(T^2+T^1)-1 \big](T^1-T^2)_+^2\\
        & + \big[\ddd(1-\rho_1^2-\rho_3^2) \big](\rho_1^1-\rho_1^2)(T^1-T^2)_+\\
        &\leq C(T^1-T^2)_+^2 + \big[\ddd(1-\rho_1^2-\rho_3^2) \big](\rho_1^1-\rho_1^2)_+(T^1-T^2)_+
\end{split}
\end{equation}
where again, we used \eqref{partiepos} in the last line, the fact that $\lambda_1\geq \lambda_2$ and that $(1-\rho_1^2-\rho_3^2)\geq 0$. Finally,
\begin{equation}\label{J1-J2}
   \big(J(R^1)-J(R^2)\big)(R^1-R^2 \big)_+ = -(r+1)(R^1-R^2 \big)_+^2. 
\end{equation}
Collecting \eqref{F1-F2}, \eqref{H1-H2} and \eqref{J1-J2} we are left with
\begin{equation}\label{Computperf}
\begin{split}
    \frac{1}{2}A(t)  
       \leq C\int_0^t\int_B &\Big( \big(\rho_1^1-\rho_1^2 \big)_+^2(r,u)+ \big(T^1-T^2 \big)_+^2(r,u)+ \big(R^1-R^2 \big)_+^2(r,u)\Big){\mathrm{d}u}{\mathrm{d}r} \\
       &= C \int_0^t A(r){\mathrm{d}r},
\end{split}
\end{equation}
where $C$ is a constant which depends on $\lambda_1,\lambda_2, r, d$ and by Grönwall's lemma, $A(t)=0$.
\end{proof}

\begin{corollary}\label{C1}
Denote by $\whr^0=(\rho_1^0,T^0,R^0)$, resp. $\whr^1=(\rho_1^1,T^1,R^1)$, the weak solution of \eqref{Dirichletmodifie} with (Dirichlet ; Robin) boundary conditions and initial data $\rho_1^0=T^0=R^0=0$, resp. $\rho_1^1=T^1=R^1=1$. Then for every $t\geq s$, $\rho_{1}^0(s,.)\leq \rho_{1}^0(t,.)$, $T^0(s,.)\leq T^0(t,.)$ and $R^0(s,.)\leq R^0(t,.)$, resp. $\rho_{1}^1(s,.)\geq \rho_{1}^1(t,.)$, $T^1(s,.)\geq T^1(t,.) $ and $R^1(s,.)\geq R^1(t,.) $ almost surely. Furthermore, any other solution $(\rho_1,T,R)$ of \eqref{Dirichletmodifie} with (Dirichlet ; Robin) boundary conditions satisfies: $\rho_1^0\leq \rho_1\leq \rho_1^1$, $T^0\leq T\leq T^1$ and $R^0\leq R \leq R^1$ almost surely.

The same result holds for $\whr^0=(\rho_1^0,T^0,R^0)$, resp. $\whr^1=(\rho_1^1,T^1,R^1)$, the weak solution of \eqref{Dirichletmodifie} with (Neumann ; Robin) boundary conditions and initial data $\rho_1^0=T^0=R^0=0$, resp. $\rho_1^1=T^1=R^1=1$. 
\end{corollary}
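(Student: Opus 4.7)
The plan is to deduce the corollary directly from Lemma \ref{L Att} by exploiting two facts: the system \eqref{Dirichletmodifie} is autonomous, and the boundary datum $\widehat{b}$ is time-independent. The key preliminary observation is that in the new coordinates $(\rho_1,T,R)$, the constant profiles $(0,0,0)$ and $(1,1,1)$ are respectively the pointwise minimum and maximum of the set of admissible profiles: for any $(\rho_1,\rho_2,\rho_3)$ with nonnegative entries summing to at most $1$, one has $\rho_1$, $T=\rho_1+\rho_3$, $R=1-\rho_2-\rho_3\in[0,1]$, so $(0,0,0)\leq(\rho_1,T,R)\leq(1,1,1)$ in the order of Section~4.

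The bracketing of an arbitrary solution $\whr=(\rho_1,T,R)$ (with initial profile $\widehat{\gamma}$) between $\whr^0$ and $\whr^1$ is then immediate: at $t=0$ one has $(0,0,0)\leq\widehat{\gamma}\leq(1,1,1)$ almost everywhere, and the three trajectories satisfy the same PDE with the same boundary data, so I would apply Lemma \ref{L Att} twice (with $s=0$) to the pairs $(\whr^0,\whr)$ and $(\whr,\whr^1)$ to conclude $\whr^0(t,\cdot)\leq\whr(t,\cdot)\leq\whr^1(t,\cdot)$ for every $t\geq 0$.

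For the monotonicity in time of $\whr^0$, the idea is a time-translation argument. Fix $s\geq 0$ and set $\psi(t,\cdot):=\whr^0(t+s,\cdot)$. Because \eqref{Dirichletmodifie} is autonomous and $\widehat{b}$ is time-independent, $\psi$ is again a weak solution on $[0,T-s]$ with the same boundary conditions; this is verified by a direct change of variable in the weak formulation \eqref{weakD+R-2}. Its initial datum $\psi(0,\cdot)=\whr^0(s,\cdot)$ satisfies $(0,0,0)\leq\whr^0(s,\cdot)$ by the preliminary observation, so Lemma \ref{L Att} applied to the pair $(\whr^0,\psi)$ gives $\whr^0(t,\cdot)\leq\whr^0(t+s,\cdot)$ for every $t\geq 0$. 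The argument for $\whr^1$ is entirely symmetric, starting from $\whr^1(s,\cdot)\leq(1,1,1)=\whr^1(0,\cdot)$. The Neumann+Robin case is handled in exactly the same way, invoking the second part of Lemma \ref{L Att}.

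The only step that really requires attention — and the closest thing to a genuine obstacle — is the a priori bound $(\rho_1,T,R)\in[0,1]^3$ along the flow, used implicitly in both arguments. This is the standard invariant-region statement for reaction-diffusion systems, and can be verified by checking the sign of the nonlinearities $F_1,H,J$ on the relevant faces of the cube, together with the fact that the boundary datum $\widehat{b}$ itself takes values in this region by \eqref{Condition1}.
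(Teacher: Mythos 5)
Your proof is correct and follows essentially the same route as the paper: the paper's argument is precisely the time-translation trick (consider $\tau_s\whr^0(t,u)=\whr^0(t+s,u)$, note it solves the same autonomous problem with initial datum dominating $(0,0,0)$, and apply Lemma \ref{L Att}), with the bracketing of an arbitrary solution and the symmetric cases left as "the same". Your explicit flagging of the a priori confinement $(\rho_1,T,R)\in[0,1]^3$ is a point the paper uses implicitly (its weak solutions are densities valued in the simplex, as in the uniqueness proofs), so no gap either way.
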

\begin{proof}
We prove the result for the (Dirichlet ; Robin) boundary problem and for $\whr^0$. The proof is the same for $\whr^1$ and for the (Neumann ; Robin) case. Fix $s\geq 0$ and consider $\tau_{s} \whr^0:(t,u)\mapsto \whr^0(t+s,u)$. $\tau_{s} \whr^0$ is the solution of \eqref{Dirichletmodifie} with initial condition $u\mapsto \whr^0(s,u)$ and almost surely in $B$, $\tau_{s} \whr^0(0,u) \geq (0,0,0)=\whr^0(0,u)$. Applying Lemma \ref{L Att} to $\tau_{s} \whr^0 $ and $ \whr^0$ with $s=0$ and $t=t-s$ yields
$\whr^0(t,u) \geq \whr^0(s,u)$ almost surely.
\end{proof}
\begin{lemma}\label{L2} Assume conditions $(H_1)$ are satisfied. 
\begin{itemize}
    \item [•]Denote by $\whr^0=(\rho_1^0,T^0,R^0)$, resp. $\whr^1=(\rho_1^1,T^1,R^1)$, the weak solution of \eqref{Dirichletmodifie} with (Dirichlet ; Robin) boundary conditions and with initial data $(0,0,0)$, resp. $(1,1,1)$. Then,
\begin{equation}\label{AttracDR}
    \underset{t \rightarrow \infty}{\lim} \sum_{i=1}^3 \int_{B}\Big(\big| \rho_{1}^1(t,u) - \rho_{1}^0(t,u)|+| T^1(t,u) - T^0(t,u)|+| R^1(t,u) - R^0(t,u)|\Big)du= 0.
\end{equation}
\item[•]Denote by $\whr^0=(\rho_1^0,T^0,R^0)$, resp. $\whr^1=(\rho_1^1,T^1,R^1)$, the weak solution of \eqref{Dirichletmodifie} with (Neumann ; Robin) boundary conditions and with initial data $(0,0,0)$, resp. $(1,1,1)$. Then,
\begin{equation}\label{AttracNR}
    \underset{t \rightarrow \infty}{\lim} \sum_{i=1}^3 \int_{B}\Big(\big| \rho_{1}^1(t,u) - \rho_{1}^0(t,u)|+| T^1(t,u) - T^0(t,u)|+| R^1(t,u) - R^0(t,u)|\Big)du= 0.
\end{equation}
\end{itemize}

\end{lemma}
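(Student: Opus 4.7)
The plan is to prove exponential decay of the $L^2$-norms of the differences $w_1 = \rho_1^1 - \rho_1^0$, $w_T = T^1 - T^0$, and $w_R = R^1 - R^0$ (which are nonnegative by Corollary~\ref{C1}) via an energy method, and then to deduce the $L^1$-convergence stated in the lemma. Since $\whr^0$ and $\whr^1$ satisfy the same boundary data, the differences satisfy homogeneous Robin conditions $\partial_{e_1} w_i = -(1/D) w_i$ on $\Gamma^+$, and either $w_i = 0$ on $\Gamma^-$ (in the Dirichlet case) or $\partial_{e_1} w_i = 0$ on $\Gamma^-$ (in the Neumann case).

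The first key observation is that in the transformed coordinates \eqref{Changecoord} the equation for $R$ decouples, so $w_R$ solves the linear equation $\partial_t w_R = D\Delta w_R - (r+1) w_R$. Testing with $w_R$ and integrating by parts using the boundary conditions gives
\begin{equation*}
\tfrac{1}{2}\tfrac{d}{dt}\|w_R\|_2^2 \;=\; -D\|\nabla w_R\|_2^2 \,-\, \|w_R\|_{L^2(\Gamma^+)}^2 \,-\, (r+1)\|w_R\|_2^2 \;\leq\; -(r+1)\|w_R\|_2^2,
\end{equation*}
so $\|w_R(t)\|_2$ decays at exponential rate $r+1$. For $w_1$ and $w_T$, I would introduce the weighted energy $\phi(t) = \alpha\,\|w_1(t)\|_2^2 + \beta\,\|w_T(t)\|_2^2$ for positive constants $\alpha,\beta$ to be chosen, and compute $\phi'(t)$ using the weak formulation together with the mean value theorem on $F_1$ and $H$. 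Under $(H_1)$, the diagonal partial derivatives are strictly negative on $[0,1]^3$: $\partial_T H \leq 2d\lambda_2 - 1 < 0$ and $\partial_{\rho_1} F_1 \leq \ddd - (r+2) < -1$, while the off-diagonal derivatives $\partial_{\rho_1} H$, $\partial_T F_1$, $\partial_R F_1$ are uniformly bounded and give cross-terms that can be controlled by Young's inequality.

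Combining these estimates and choosing $\alpha, \beta$ appropriately, I expect an inequality of the form
\begin{equation*}
\tfrac{1}{2}\phi'(t) \;\leq\; -c\,\phi(t) \,+\, C\,\|w_R(t)\|_2^2,
\end{equation*}
where the last term comes from the coupling $\partial_R F_1 \cdot w_R$ in the $w_1$ equation. Since $\|w_R\|_2^2$ decays exponentially, Gronwall's lemma applied to $\phi + C'\|w_R\|_2^2$ yields $\phi(t) \to 0$ at exponential rate. Since $\rho_2 = 1 - R - (T - \rho_1)$ and $\rho_3 = T - \rho_1$ depend linearly on $(\rho_1, T, R)$, $L^2$-convergence transfers to each $\rho_i^1 - \rho_i^0$, and since $B$ is bounded, $L^2(B) \hookrightarrow L^1(B)$ yields the stated $L^1$-convergence.

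The main obstacle will be closing the differential inequality for $\phi$ using only conditions $(H_1)$: one must balance $\alpha,\beta$ and the Young's parameters so that the quadratic form arising from the nonlinear cross-terms is dominated by the self-damping coming from $\partial_{\rho_1} F_1$ and $\partial_T H$, the gradient dissipation $-D\|\nabla w\|_2^2$, and the Robin boundary contribution $-\|w\|_{L^2(\Gamma^+)}^2$. In the (Neumann; Robin) regime, one cannot invoke a Dirichlet--Poincaré inequality, and a Friedrichs-type inequality combining the gradient with the Robin trace on $\Gamma^+$ must be used instead; this is precisely where the condition $D \geq 1$ in $(H_1)$ enters, allowing the gradient term to absorb the bulk $L^2$ norm with an $O(1)$ constant independently of the boundary regime.
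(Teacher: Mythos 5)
Your treatment of $R$ is fine and in fact sharper than the paper's: since $J$ is linear, testing with $w_R=R^1-R^0$ and using the homogeneous boundary conditions satisfied by the difference gives exponential decay of $\|w_R\|_2$. The gap is in the coupled $(w_1,w_T)$ block. Closing a differential inequality $\phi'\le -c\,\phi+C\|w_R\|_2^2$ for $\phi=\alpha\|w_1\|_2^2+\beta\|w_T\|_2^2$ requires the $2\times 2$ quadratic form built from the diagonal damping (augmented by whatever Poincar\'e constant the gradient dissipation provides) and the off-diagonal couplings to be negative definite. The off-diagonal coefficients are $\sup\partial_T F_1=1+2d\lambda_2$ and $\sup\partial_{\rho_1}H=\ddd$, so after optimizing over $\alpha,\beta$ and the Young parameters the best achievable condition is essentially $(1+2d\lambda_2)\cdot\ddd<(D\gamma_1+r+2-\ddd)(D\gamma_1+1-2d\lambda_2)$. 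This is \emph{not} implied by $(H_1)$: take $d=1$, $2d\lambda_2$ close to $1$, and $\lambda_1$ and $r$ both large with $r+1>\ddd$; the left side grows linearly in $\lambda_1$ while the right side stays bounded once $D$ is fixed. So no choice of weights closes your Gronwall inequality in general; you correctly flag this as ``the main obstacle'' but do not resolve it, and under $(H_1)$ alone it cannot be resolved along this route.

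The paper's proof avoids this by proving a weaker differential statement and compensating with monotonicity. It expands on the eigenbasis $\{W_k\}$ of the mixed Dirichlet/Neumann Laplacian (which also sidesteps justifying the energy identity for weak solutions --- you test with $w$ itself, which needs an argument), integrates in time, and uses the uniform boundedness of the partial energies $B_n,C_n$ to downgrade the cross terms to sublinear contributions of the form $-\mathrm{const}\cdot\sqrt{\smash[b]{\int_0^T B}}$; under $(H_1)$ the coefficients of $\int_0^T B$ and $\int_0^T C$ remain strictly positive, which yields only $\int_0^\infty\|w_1(t)\|_2^2\,dt+\int_0^\infty\|w_T(t)\|_2^2\,dt<\infty$. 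The decisive ingredient your proposal omits is then Corollary \ref{C1}: the comparison principle applied to time translates shows that $t\mapsto\whr^1(t,\cdot)$ decreases and $t\mapsto\whr^0(t,\cdot)$ increases, hence $t\mapsto\|w_i(t)\|_2^2$ is nonincreasing, and time-integrability together with monotonicity gives convergence to $0$. Without that monotonicity your scheme must produce an actual decay rate, which $(H_1)$ does not supply; with it, the far weaker integrability statement suffices.
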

\begin{proof}
We start with the proof of the (Dirichlet ; Robin) regime. It is enough to show that 
\begin{equation}\label{sum}
    \underset{t\rightarrow\infty}{\lim}~ \Big( \|\rho_{1}^1(t,.)- \rho_{1}^0(t,.)\|_{2}^2+\|T^1(t,.)- T^0(t,.)\|_{2}^2+\|R^1(t,.)- R^0(t,.)\|_{2}^2\Big)=0.
\end{equation}
Consider the eigenvalue problem for the Laplacian \eqref{eigenprobD+N} and the countable system $\{W_{n},\gamma_{n},~ n\geq 1\}$ of eigensolutions for that problem. For $n\geq 1$ introduce 
\begin{equation}\label{Gnt}
\begin{split}
    K_{n}(t) =A_n(t)+ B_n(t)+&C_n(t):=\sum_{k=1}^n  |\langle R^1(t,.)- R^0(t,.),W_{k}\rangle|^2\\
    & + \sum_{k=1}^n  |\langle\rho_1^1(t,.)- \rho_1^0(t,.),W_{k}\rangle|^2+ \sum_{k=1}^n  |\langle T^1(t,.)- T^0(t,.),W_{k}\rangle|^2.
\end{split}
\end{equation}
Recall that by \eqref{Norme2}, one has
\[A_n(t) \underset{n\rightarrow \infty}{\longrightarrow}\|R^1(t,.)- R^0(t,.)\|_{2}^2=:A(t),~~ ~   B_n(t) \underset{n\rightarrow \infty}{\longrightarrow}\|\rho_1^1(t,.)- \rho_1^0(t,.)\|_{2}^2=:B(t),\]
and
\[C_n(t) \underset{n\rightarrow \infty}{\longrightarrow}\|T^1(t,.)- T^0(t,.)\|_{2}^2=:C(t).\]
Let us first prove that $ \underset{t\rightarrow \infty}{\lim}~\underset{n\rightarrow \infty}{\lim}A_n(t) =0$. $A_n$ is time differentiable and the weak formulation of a solution of \eqref{Dirichletmodifie} with (Dirichlet ; Robin) boundary conditions yields,
\begin{equation}\label{derivv}
\begin{split}
   A'_n(t) &=-2\sum_{k=1}^n(D\gamma_k+r+1) |\langle R_{t}^1-R_{t}^0,W_{k}\rangle|^2\\
   &- 2 \sum_{k=1}^n\langle R_{t}^1-R_{t}^0,W_{k}\rangle\int_{\Gamma_+}(R_t^1-R_t^0)(r)W_k(r)n_1(r)dS(r).
\end{split}
\end{equation}
Integrating this between $0$ and $T$ and using the Cauchy-Schwarz inequality twice yields
\begin{equation*}
\begin{split}
        A_{n}(0)-A_{n}(T) \geq \int_{0}^T\sum_{k=1}^n2(D\gamma_k + r+1)\big|\langle R_{t}^1-R_{t}^0,W_{k}\rangle\big|^2{\mathrm{d}t} \\
        - 2\sqrt{\int_{0}^T\sum_{k=1}^n \big|\langle R_{t}^1-R_{t}^0,W_{k}\rangle\big|^2{\mathrm{d}t} }\sqrt{\int_{0}^T\sum_{k=1}^n \Big(\int_{\Gamma_+}(R_t^1-R_t^0)(r)\Check{W}_k(r)n_1(r)dS(r)\Big)^2 {\mathrm{d}t}}. 
\end{split}
\end{equation*}
Taking $n \rightarrow \infty$ and using \eqref{Norme3} and \eqref{Norme4} using the $W_k's$ and $\Check{W}_k's $ instead of the $V_k's$ and $\Check{V}_k's $ we get
\begin{equation*}
\begin{split}
        A(0)&\geq 2(r+1) \int_0^TA(t)dt + 2D \int_0^T\Tilde{A}(t)dt-2\sqrt{\int_0^TA(t)dt}\sqrt{\int_0^T\|R_t^1-R_t^0\|_{L^2(\Gamma)}^2dt}\\
        &\geq 2(r+1) \int_0^TA(t)dt  + 2D \int_0^T\Tilde{A}(t)dt-2\sqrt{\int_0^TA(t)dt}\sqrt{\int_0^TA(t)dt+\int_0^T\Tilde{A}(t)dt}\\
        &\geq 2(r+1) \int_0^TA(t)dt + 2D \int_0^T\Tilde{A}(t)dt-2\Big(\int_0^TA(t)dt+\int_0^T\Tilde{A}(t)dt\Big)\\
        &\geq 2r \int_0^TA(t)dt+ 2(D-1)\int_0^T\Tilde{A}(t)dt
\end{split}
\end{equation*}
where $\Tilde{A}(t)=\|\nabla(R_t^1-R_t^0)\|_{L^2}^2 $ and where we used the trace inequality \eqref{Trace2} in the second inequality.
Taking $T \rightarrow \infty$, and using that $D\geq 1$ we get that
\[\int_{0}^\infty\|R_{t}^1-R^0_{t}\|_{2}^2{\mathrm{d}t}<\infty.\]
By Corollary \ref{C1}, $R^1$ is almost surely decreasing and $R^0$ increasing therefore $R_{t}^1-R^0_{t} $ is almost surely decreasing and the above inequality implies 
\begin{equation*}\label{ER}
    \|R_{t}^1-R^0_{t}\|_{2}^2 \underset{t \rightarrow \infty}{\longrightarrow} 0.
\end{equation*}
We are now left to show that
\begin{equation}\label{rho1}
\underset{t\rightarrow\infty}{\lim}~ \underset{n\rightarrow\infty}{\lim}~ \big[B_n(t)+ C_n(t)\big]=0. 
\end{equation}
We proceed following the same steps as for $A_n$.
\begin{equation} 
    \begin{split}
        B'_n(t) &= -2D\sum_{k=1}^n\gamma_k |\langle\rho_{1,t}^1-\rho_{1,t}^0,W_{k}\rangle|^2 + 2\sum_{k=1}^n\langle F_1(\whr_t^1)-F_1(\whr_t^0),W_k\rangle \langle \rho_{1,t}^1-\rho_{1,t}^0,W_k\rangle\\
        &- 2\sum_{k=1}^n\langle\rho_{1,t}^1-\rho_{1,t}^0,W_k\rangle\int_{\Gamma_+}(\rho_{1,t}^1-\rho_{1,t}^0)(r)\Check{W}_k(r)n_1(r).dS(r).
    \end{split}
\end{equation}
To lighten notations we will not write the subscript $t$ in the computations, when there is no confusion. Let us compute the second term.
\begin{equation}\label{Comput1}
    \begin{split}
        &\sum_{k=1}^n\langle F_1(\whr^1)-F_1(\whr^0),W_k\rangle \langle\rho_1^1-\rho_1^0,W_k\rangle = \ddd\sum_{k=1}^n\langle\rho_1^1(R^1-R^0),W_k\rangle \langle\rho_1^1-\rho_1^0,W_k\rangle\\
        &+\ddd\sum_{k=1}^n\langle R^0(\rho_1^1-\rho_1^0),W_k\rangle \langle\rho_1^1-\rho_1^0,W_k\rangle\\
        &-\ddd\sum_{k=1}^n\langle (\rho_1^1)^2+(\rho_1^0)^2,W_k\rangle \langle \rho_1^1-\rho_1^0,W_k\rangle\\
        &+2d\lambda_2\sum_{k=1}^n\langle R^1(T^1-T^0),W_k\rangle \langle\rho_1^1-\rho_1^0,W_k\rangle +2d\lambda_2\sum_{k=1}^n\langle T^0(R^1-R^0),W_k\rangle \langle\rho_1^1-\rho_1^0,W_k\rangle\\
        &-2d\lambda_2\sum_{k=1}^n\langle T^1(\rho_1^1-\rho_1^0),W_k\rangle \langle\rho_1^1-\rho_1^0,W_k\rangle -2d\lambda_2\sum_{k=1}^n\langle \rho_1^0(T^1-T^0),W_k\rangle \langle \rho_1^1-\rho_1^0,W_k\rangle\\
        &+\sum_{k=1}^n\langle T^1-T^0,W_k\rangle \langle \rho_1^1-\rho_1^0,W_k\rangle-(r+2)\sum_{k=1}^n|\langle\rho_1^1-\rho_1^0,W_k\rangle|^2.
    \end{split}
\end{equation}
Using Lemma \ref{L Att} and the Cauchy-Schwarz inequality, we get:
\begin{equation*}
    \begin{split}
        -\frac{1}{2}B'_n(t) &\geq \sum_{k=1}^n\big[D\gamma_k+r+2-\ddd\big] \big|\langle\rho_{1,t}^1-\rho_{1,t}^0,W_k\rangle\big|^2\\
        &-(1+2d\lambda_2)\sum_{k=1}^n\langle T^1_t-T^0_t,W_k\rangle \langle\rho_1^1-\rho_{1,t}^0,W_k\rangle \\
        &- 2d\lambda_1\sqrt{A_n(t)}\sqrt{B_n(t)} - \sqrt{B_n(t)} \sqrt{\sum_{k=1}^n\Big(\int_{\Gamma_+}(\rho_{1,t}^1-\rho_{1,t}^0)(r)\Check{W}_k(r)n_1(r)dS(r)\Big)^2}.
    \end{split}
\end{equation*}
Integrating this between $0$ and $T$ and using the Cauchy-Scwharz inequality we are left with
\begin{equation}\label{PourB}
    \begin{split}
        \frac{1}{2}\big(B_n(0)-B_n(T) \big) &\geq \int_0^T\sum_{k=1}^n\big[D\gamma_k+r+2-\ddd\big] \big|<\rho_{1,t}^1-\rho_{1,t}^0,W_k>\big|^2{\mathrm{d}t} \\
        &-(1+2d\lambda_2) \int_0^T\sqrt{B_n(t)}\sqrt{C_n(t)}{\mathrm{d}t}-2d\lambda_1 \int_0^T\sqrt{A_n(t)}\sqrt{B_n(t)}{\mathrm{d}t}\\
        &-\sqrt{\int_0^TB_n(t){\mathrm{d}t}}\sqrt{\int_0^T {\sum_{k=1}^n\Big(\int_{\Gamma_+}(\rho_{1,t}^1-\rho_{1,t}^0)(r)\Check{W}_k(r)}n_1(r)dS(r)\Big)^2\mathrm{d}t}.
    \end{split}
\end{equation}
Now
\begin{equation}
    \begin{split}
        C'_n(t)&= -2D\sum_{k=1}^n\gamma_k |\langle T^1_t-T^0_t,W_{k}\rangle|^2 + 2\sum_{k=1}^n\langle H(\whr^1_t)-H(\whr^0_t),W_k\rangle \langle T^1_t-T^0_t,W_k\rangle\\
        &- 2\sum_{k=1}^n\langle T^1_t-T^0_t,W_k\rangle \int_{\Gamma_+}(T^1_t-T^0_t)(r)\Check{W}_k(r)n_1(r).dS(r).
    \end{split}
\end{equation}
Again, we compute the second term using the explicit expression of $H$:
\begin{equation}\label{Comput2}
\begin{split}
    &\sum_{k=1}^n\langle H(\whr^1)-H(\whr^0),W_k\rangle \langle\rho_1^1-\rho_1^0,W_k\rangle = \ddd \sum_{k=1}^n\langle \rho_1^1-\rho_1^0,W_k\rangle \langle T^1-T^0,W_k\rangle\\
    &+(2d\lambda_2-1)\sum_{k=1}^n|\langle T^1-T^0,W_k\rangle|^2 - \ddd\sum_{k=1}^n\langle\rho_1^1(T^1-T^0),W_k\rangle \langle T^1-T^0,W_k\rangle\\
    &-\ddd\sum_{k=1}^n\langle T^0(\rho_1^1-\rho_1^0),W_k\rangle \langle T^1-T^0,W_k\rangle\\
    &-2d\lambda_2\sum_{k=1}^n\langle (T^1)^2-(T^0)^2,W_k\rangle \langle T^1-T^0,W_k\rangle.
\end{split}
\end{equation}
Using Lemma \ref{L Att} and the Cauchy-Schwarz inequality, we get:
\begin{equation*}
\begin{split}
        &-\frac{1}{2}C'_n(t)\geq \sum_{k=1}^n\big[ D\gamma_k+1-2d\lambda_2\big]\big|\langle T^1_t-T^0_t,W_k\rangle \big|^2\\
        &-\sqrt{C_n(t)}\sqrt{\sum_{k=1}^n\Big(\int_{\Gamma_+}(T^1_t-T^0_t)(r)\Check{W}_k(r)n_1(r)dS(r)\Big)^2}-\ddd\sqrt{C_n(t)}\sqrt{B_n(t)}.
\end{split}
\end{equation*}
Integrating this between $0$ and $T$ and using the Cauchy-Scwharz inequality we are left with:
\begin{equation}\label{PourC}
    \begin{split}
        \frac{1}{2}\big(C_n(0)-C_n(T)\big)&\geq \int_0^T\sum_{k=1}^n\big[D\gamma_k+1-2d\lambda_2 \big]\big|\langle T^1_t-T^0_t,W_k\rangle \big|^2{\mathrm{d}t}\\
        &-\ddd\int_0^T \sqrt{C_n(t)}\sqrt{B_n(t)}{\mathrm{d}t}\\
        &-\sqrt{\int_0^TC_n(t){\mathrm{d}t}}\sqrt{\int_0^T\sum_{k=1}^n\Big(\int_{\Gamma_+}(T^1_t-T^0_t)(r)\Check{W}_k(r)n_1(r)dS(r)\Big)^2{\mathrm{d}t}} .
    \end{split}
\end{equation}
Summing inequalities \eqref{PourB} and \eqref{PourC}, using that $B_n$ is uniformly bounded by a constant $K_1$ and $C_n$ by a constant $K_2$ and that for any $a,b>0$, $-\sqrt{a}\sqrt{b}\geq -\frac{1}{2}(a+b)$, we obtain
\begin{equation}
    \begin{split}\label{grosterm}
        &\frac{1}{2}\big(B_n(0)-B_n(T)+ C_n(0)-C_n(T) \big) \geq\\
        &\int_0^T\sum_{k=1}^n\big[D\gamma_k+r+2-\ddd\big] \big|\langle\rho_{1,t}^1-\rho_{1,t}^0,W_k\rangle\big|^2{\mathrm{d}t}\\
        &-\frac{1}{2}\int_0^TB_n(t){\mathrm{d}t} -\frac{1}{2} \int_0^T\sum_{k=1}^n\Big(\int_{\Gamma_+}(\rho_{1,t}^1-\rho_{1,t}^0)(r)\Check{W}_k(r)n_1(r)dS(r)\Big)^2{\mathrm{d}t}\\
        &+ \int_0^T\sum_{k=1}^n\big[D\gamma_k+1-2d\lambda_2\big] \big|\langle T^1_t-T^0_t,W_k\rangle\big|^2{\mathrm{d}t}\\
        &-\frac{1}{2} \int_0^TC_n(t){\mathrm{d}t} -\frac{1}{2} \int_0^T\sum_{k=1}^n\Big(\int_{\Gamma_+}(T^1_t-T^0_t)(r)\Check{W}_k(r)n_1(r)dS(r)\Big)^2{\mathrm{d}t}\\
        &-2dK_1\lambda_1\sqrt{\int_0^TA_n(t){\mathrm{d}t}}-(1+2d\lambda_1)K_2\sqrt{\int_0^TB_n(t){\mathrm{d}t}}.
    \end{split}
\end{equation}
Now, split the first and fourth terms into two parts. Lower bound the first one using that $\gamma_1 < \gamma_k$ for any $k\geq 2$ and we shall then use \eqref{Norme3} to deal with the second term. The left-hand side in \eqref{grosterm} is lower bounded by
\begin{equation}
    \begin{split}
        &\frac{1}{2}\Big[\int_0^T\big(D\gamma_1 + r+2 - 2d(\lambda_1-\lambda_2) \big)\sum_{k=1}^n \big|\langle\rho_{1,t}^1-\rho_{1,t}^0,W_k\rangle\big|^2{\mathrm{d}t} - \int_0^T B_n(t) {\mathrm{d}t} \Big]\\
        & + \frac{1}{2}\Big[\int_0^T\sum_{k=1}^n\big(D\gamma_k+r+2-\ddd\big) \big|\langle\rho_{1,t}^1-\rho_{1,t}^0,W_k\rangle\big|^2{\mathrm{d}t}\\
        &- \int_0^T\sum_{k=1}^n\Big(\int_{\Gamma_+}(\rho_{1,t}^1-\rho_{1,t}^0)(r)\Check{W}_k(r)n_1(r)dS(r)\Big)^2{\mathrm{d}t}  \Big]\\
        & + \frac{1}{2}\Big[\int_0^T \big(D\gamma_1+1-2d\lambda_2\big) \sum_{k=1}^n\big|\langle T^1_t-T^0_t,W_k\rangle\big|^2{\mathrm{d}t} - \int_0^T C_n(t) {\mathrm{d}t}\Big]\\
        & + \frac{1}{2}\Big[\int_0^T  \sum_{k=1}^n\big(D\gamma_k+1-2d\lambda_2\big)\big|\langle T^1_t-T^0_t,W_k\rangle\big|^2{\mathrm{d}t}\\
        &- \int_0^T\sum_{k=1}^n\Big(\int_{\Gamma_+}(T^1_t-T^0_t)(r)\Check{W}_k(r)n_1(r)dS(r)\Big)^2{\mathrm{d}t} \Big]\\
        &- 2d\lambda_1K_1 \sqrt{\int_0^T A_n(t){\mathrm{d}t}} - (1 + 2d\lambda_1)K_2 \sqrt{\int_0^T B_n(t){\mathrm{d}t}}.
    \end{split}
\end{equation}
Taking $n$ to infinity, using \eqref{ordrevp2} and the dominated convergence theorem as well as the trace inequality stated in Theorem \ref{Trace} and using that $D\geq 1$ and $\gamma_1>1$ (see \eqref{gama})  we get:
\begin{equation}
    \begin{split}\label{timeint}
       &\big(B(0)-B(T)+ C(0)-C(T) \big) \geq\\
       &2\big(r+1-\ddd \big) \int_0^T B(t){\mathrm{d}t} + (D-1) \int_0^T\|\nabla(\rho_{1,t}^1-\rho_{1,t}^0)\|^2{\mathrm{d}t} \\
       & + 2(1-2d\lambda_2) \int_0^TC(t){\mathrm{d}t} + (D-1)\int_0^T\|\nabla(T^1_t-T^0_t)\|^2{\mathrm{d}t} \\
       &- 4d\lambda_1 K_1 \sqrt{\int_0^T A(t){\mathrm{d}t}} - (1+2d\lambda_1)K_2\sqrt{\int_0^T B(t){\mathrm{d}t}}. 
    \end{split}
\end{equation}
Since conditions $(H_1)$ hold, all the factors between the time integrals $\int_0^TB(t){\mathrm{d}t}$ and $\int_0^TC(t){\mathrm{d}t}$ are strictly positive and inequality \eqref{timeint} implies that
\[\int_0^{\infty}B(t){\mathrm{d}t}<\infty,~ ~ \text{and}~ \int_0^{\infty}C(t){\mathrm{d}t}<\infty . \]
Again, by Corollary \ref{C1}, $\rho_1^1$ and
$T^1$ are almost surely decreasing and $\rho_1^0$ and $T^0$ increasing, therefore $\rho_{1}^1-\rho_1^0$ and $T^1-T^0$ are almost surely decreasing and the above inequalities imply
\begin{equation*}
    \|\rho_{1,t}^1-\rho_{1,t}^0\|_{2}^2 \underset{t \rightarrow \infty}{\longrightarrow} 0,~ ~ \text{and}~ ~ \|T_{t}^1-T_{t}^0\|_{2}^2 \underset{t \rightarrow \infty}{\longrightarrow} 0.
\end{equation*}
For the proof in the (Neumann ; Robin) regime, one proceeds in the same way, but decomposing the difference between $\whr^1$ and $\whr^0$ on the basis $(V_k)_{k\geq 1}$ and using conditions $(H_2)$.
\end{proof}
Now, we are able to prove Theorem \ref{T3}.
\begin{proof}[Proof of Theorem \ref{T3}]
    Again, we focus on the (Dirichlet ; Robin) regime and the proof is the same for all the others. As said before, it is enough to prove uniqueness of a solution of 
\begin{equation} \label{Dirichletmodifiestat}
    \left\{
    \begin{array}{ll}
        D\Delta \rho_1 + F_1(\rho_1,T,R)=0,~~ ~  \rho_{1_{|\Gamma^-}} = b_1(.),~~~\partial_{e_{1}}\rho_1(t,.)_{|\Gamma^{+}}=\frac{1}{D}(b_1-\rho_1)_{|\Gamma^{+}}\\ 
        D\Delta T + H(\rho_1,T,R)=0,~T_{|\Gamma^-}= b_1(.)+b_3(.),~\partial_{e_{1}}T(t,.)_{|\Gamma^{+}}=\frac{1}{D}(b_1+b_3-\rho_1-\rho_3)_{|\Gamma^{+}}\\
        D\Delta R + J(R)=0,~  R_{|\Gamma^-}=1-b_2(.)-b_3(.),~~~\partial_{e_{1}}R(t,.)_{|\Gamma^{+}}=\frac{1}{D}(\rho_2+\rho_3-b_1-b_3)_{|\Gamma^{+}}.
    \end{array}
\right.
\end{equation}
\begin{itemize}
    \item [(i)] \textit{Existence:} For $n\in \N$, define
    \begin{equation}
        U^0_{n}= \{u\in B,~ \rho_{1}^0(n,.)\leq \rho_{1}^0(n+1,.),~ T^0(n,.)\leq T^0(n+1,.),~ R^0(n,.)\leq R^0(n+1,.)\}
    \end{equation}
    and
    \begin{equation}
        U^1_{n}=\{u\in B,~ \rho_{1}^1(n,.)\leq \rho_{1}^1(n+1,.),~ T^1(n,.)\leq T^1(n+1,.),~ R^1(n,.)\leq R^1(n+1,.)\}.
    \end{equation}
    By Corollary \ref{C1}, the above sets are almost sure and so is $U:=\underset{n\geq 0}{\cap}(U_{n}^0\cap U_{n}^1)$. On $U$, the sequence of profiles $\{\whr^1(n,.),~ n \geq 1\}$ (resp.$\{\whr^0(n,.),~ n \geq 1\}$) decreases (resp. increases) to a limit that we denote by $\whr^+(.) = (\rho^+_{1}(.),T^+(.),R^+(.))$ (resp. $\whr^-(.) = (\rho^-_{1}(.),T^{-}(.),R^-(.))$). By Lemma \ref{L2}, $\whr^+=\whr^-$ everywhere on $U$ so almost surely on $B$ . Denote this profile by $\underline{\rho}$ and consider $\underline{\rho}(t,.)$ the solution to \eqref{Dirichletmodifiestat} with initial condition $\underline{\rho}$. Since for all $t\geq 0$, $\whr^0(t,.) \leq \underline{\rho}(.) \leq \whr^1(t,.)$ almost surely, by Lemma \ref{L Att} we have that for every $s,t\geq 0$, $\whr^0(t+s,.) \leq \underline{\rho}(s,.) \leq \whr^1(t+s,.)$ almost surely and letting $t \rightarrow \infty$ we get that $\underline{\rho}(s,.) = \underline{\rho}(.)$ for all $s$ so $\underline{\rho}$ is a solution of \eqref{Dirichletmodifiestat}.
    \item[(ii)] \textit{Uniqueness:} Note that by Lemma \ref{L Att} and Corollary \ref{C1}, for any profiles $\whr^a=(\rho_1^a,T^a,R^a)$ and $\whr^b=(\rho_1^b,T^b,R^b)$ satisfying \eqref{Dirichletmodifie} with any initial condition, for every $t>0$
    \begin{equation}\label{Uniq}
    \begin{split}
        & \int_{B}\Big(\big| \rho_{1}^a(t,u) - \rho_{1}^b(t,u)|+\big| T^a(t,u) - T^b(t,u)|+\big| R^a(t,u) - R^b(t,u)|\Big){\mathrm{d}u}\\
        &\leq \int_{B}\Big(\big| \rho_{1}^1(t,u) - \rho_{1}^0(t,u)|+\big| T^1(t,u) - T^0(t,u)|+\big| R^1(t,u) - R^0(t,u)|\Big){\mathrm{d}u}.
    \end{split}
    \end{equation}
    Applying \eqref{Uniq} to two stationary solutions and using Lemma \ref{L2}, one gets uniqueness.
\end{itemize}
\noindent As said before, existence and uniqueness of a solution $\underline{\rho}$ of \eqref{Dirichletmodifiestat} yields existence and uniqueness of the stationary solution of \eqref{D+R}. Similarly, the proof of \eqref{P1} comes from the fact that 
\begin{equation}
    \begin{split}
        &\int_B\Big(|\rho_1(t,u)-\underline{\rho_1}(u)|+ |T(t,u)-\underline{T}(u)|+ |R(t,u)-\underline{R}(u)| \Big)du\\
        &\leq \int_{B}\Big(\big| \rho_{1}^1(t,u) - \rho_{1}^0(t,u)|+\big| T^1(t,u) - T^0(t,u)|+\big| R^1(t,u) - R^0(t,u)|\Big)du
    \end{split}
\end{equation}
where again, we applied \eqref{Uniq} and the fact that the right-hand side term converges to $0$.
\end{proof}

\appendix
\section{Change of variable formulas}
The following change of variable formulas have been established in \cite[Section 5.2]{KMS}. Recall that for $i,j\in \{0,1,2,3\}$ and $x,y\in B_N$, $v_{j}(x/N) = \log(\alpha_{j}(x/N))$, and 
\begin{equation*}
    R_{i,j}^{x,y}(\widehat{\alpha})=\exp\Big(\big(v_j(y/N)-v_{j}(x/N)\big)-\big(v_i(y/N)-v_{i}(x/N)\big)\Big)-1.
\end{equation*}
Note that $ R_{i,j}^{x,y}(\widehat{\alpha})= O(N^{-1})$. Consider $f:\whsN\rightarrow \R$ and $x,y\in B_{N}$.
\begin{itemize}
    \item [(i)] For $(i,j)\in \{0,1,2,3\}^2$ such that $i\neq j$,
    \begin{equation}\label{chtva1}
        \int_{\whsN}\e_i(x)\e_j(y)f(\xi^{x,y},\omega^{x,y})d\nualph\xiw = \int_{\whsN}\e_j(x)\e_{i}(y)(R_{i,j}^{x,y}(\widehat{\alpha})+1)f\xiw d\nualph\xiw.
    \end{equation}
    \item [(ii)] For $(i,j)\in \{0,1,2,3\}^2$ such that $i\neq j$,
\begin{equation}
\begin{split}\label{chtva2}
    \int\e_{i}(x)b_{j}(x/N)f\xiw d\nualph\xiw = \int\e_j(x) b_i(x/N) f(\sigma_{i,x}\xiw) d\nualph\xiw,
\end{split}
\end{equation}
\end{itemize}
To prove both points, we use the explicit expression of $\nualph$. Let us give the details for (ii). Take $(i,j)\in \{0,1,2,3\}^2$ with $i\neq j$, then,
\begin{equation*}
\begin{split}
    \int\e_{i}(x)b_{j}(x/N)f\xiw d\nualph\xiw&= \int_{\xiw,~ \e_i(x)=1}b_{j}(x/N)f(\sigma_{i,x}\xiw) d\nualph\xiw\\
    &= \int_{(\check{\xi},\check{\omega})\in \widehat{\Sigma}_{N-1}} b_j(x/N) f(\sigma_{i,x}\xiw) \frac{b_i(x/N)}{b_0(x/N)}d\nu^{N-1}_{\widehat{\alpha}}(\check{\xi},\check{\omega})\\
    &= \int\e_j(x) b_i(x/N) f(\sigma_{i,x}\xiw) d\nualph\xiw,
\end{split}
\end{equation*}
because \[\frac{b_j(x/N)}{b_0(x/N)}\nu^{N-1}_{\widehat{\alpha}}(\check{\xi},\check{\omega}) = \nu^N_{\widehat{\alpha}}\big\{\e_j(x)=1,\xiw_{|B_N\setminus\{x\}}=(\check{\xi},\check{\omega})\big\}.\] 

\section{Simulations} \label{Simuu}
The hydrodynamic equations with Neumann boundary conditions 
\begin{equation}\label{N+N}
    \left\{
    \begin{array}{ll}
        \partial_{t}\whr = D\Delta \whr + \widehat{F}(\whr)~ \text{in}~ B\times (0,T),\\\\
        \partial_{e_{1}}\whr(t,.)_{|\Gamma}=0~ \text{for}~ 0<t\leq T,
    \end{array}
\right.
\end{equation}
have been simulated in dimension $1$ with $B=[0,1]$. For that, we used an Euler explicit scheme and chose the following parameters:
\begin{itemize}
\item Time horizon: T=100
\item Time subdivision: $\delta_T= 5.10^5$
\item Space subdivision: $\delta_x= 100$
\item $r=1$ and $D=1$.
\end{itemize}
In Figures \ref{fig:figuresAB} and \ref{fig:figuresCD}, the $x$ axis corresponds to the one dimensional space $B=[0,1]$ and the $y$ axis is the space of values of the density profiles $\rho_1$, $\rho_1+\rho_3$, $1-(\rho_2+\rho_3)$.

In the first simulation, we took $\lambda_1=0.75$ and $\lambda_2=0.25$, so the conditions $(H_1)$ are satisfied. In Figure \ref{fig : SimuA} we presented the solution of \eqref{N+N} at time $T=100$, with initial condition $(\rho_1,\rho_1+\rho_3, 1-\rho_2-\rho_3)(0,.)=(0,0,0)$  and in Figure \ref{fig: SimuB} the solution of \eqref{N+N} at time $T=100$, with initial condition $(\rho_1,\rho_1+\rho_3, 1-\rho_2-\rho_3)(0,.)=(1,1,1)$ . As expected (see Theorem \ref{T3}), both profiles in Figure \ref{fig:figuresAB} coincide.

In the second simulation, we took $\lambda_1=1$ and $\lambda_2=0.75$, so the conditions $(H_1)$ are not satisfied. In Figure \ref{fig:simuC} we presented the solution of \eqref{N+N} at time $T=100$, with initial condition $(\rho_1,\rho_1+\rho_3, 1-\rho_2-\rho_3)(0,.)=(0,0,0)$ and in Figure \ref{fig:simuD} the solution of \eqref{N+N} at time $T=100$, with initial condition $(\rho_1,\rho_1+\rho_3, 1-\rho_2-\rho_3)(0,.)=(1,1,1)$ . The profiles in Figure \ref{fig : SimuA} and \ref{fig: SimuB} do not coincide, which proves, numerically that conditions on the parameters are needed for both these limiting profiles to coincide.\medskip

\begin{figure}
    \begin{subfigure}[b]{0.5\linewidth}
        \centering
        \includegraphics[width=\linewidth]{MSVfig1.png}
        \caption{$\big(\rho_1,\rho_1+\rho_3, 1-\rho_2-\rho_3\big)(T,.)$, solution of \eqref{N+N}\\ with initial condition $(0,0,0)$.}
        \label{fig : SimuA}
    \end{subfigure}%
    \hfill
    \begin{subfigure}[b]{0.5\linewidth}
        \centering
        \includegraphics[width=\linewidth]{MSVfig2.png}
        \caption{$\big(\rho_1,\rho_1+\rho_3, 1-\rho_2-\rho_3\big)(T,.)$, solution of \eqref{N+N}\\ with initial condition $(1,1,1)$.}
        \label{fig: SimuB}
    \end{subfigure}
    \caption{$T=100$, $\lambda_1=0.75$, $\lambda_2=0.25$. Conditions $(H_1)$ are satisfied.}
    \label{fig:figuresAB}
\end{figure}
\begin{figure}
    \begin{subfigure}[b]{0.5\linewidth}
        \centering
        \includegraphics[width=\linewidth]{MSVfig3.png}
        \caption{$\big(\rho_1,\rho_1+\rho_3, 1-\rho_2-\rho_3\big)(T,.)$, solution of \eqref{N+N}\\ with initial condition $(0,0,0)$.}
       \label{fig:simuC}
    \end{subfigure}%
    \hfill
    \begin{subfigure}[b]{0.5\linewidth}
        \centering
        \includegraphics[width=\linewidth]{MSVfig4.png}
        \caption{$\big(\rho_1,\rho_1+\rho_3, 1-\rho_2-\rho_3\big)(T,.)$, solution of \eqref{N+N}\\ with initial condition $(1,1,1)$.}
        \label{fig:simuD}
    \end{subfigure}
    \caption{$T=100$, $\lambda_1=1$, $\lambda_2=0.75$. Conditions $(H_1)$ are not satisfied.}
    \label{fig:figuresCD}
\end{figure}

\newpage
\noindent \textbf{Acknowledgements:} We thank the anonymous referees for their careful reading of the paper and comments which allowed to notably improve it. The authors would also like to thank Frank Redig for an interesting discussion as well as Camille Pouchol for some useful advice regarding numerical simulations. This work has been conducted within the FP2M federation (CNRS FR 2036).

\bibliographystyle{plain}
\bibliography{MSVbiblio.bib}

\end{document}